\def\@secnumfont{\bfseries\scshape}
\def\section{\@startsection{section}{1}%
  \z@{.7\linespacing\@plus\linespacing}{.5\linespacing}%
  {\normalfont\large\bfseries\scshape\centering}}
\def\subsection{\@startsection{subsection}{2}%
  \z@{.5\linespacing\@plus.7\linespacing}{-.5em}%
  {\normalfont\bfseries\scshape}}
\def\subsubsection{\@startsection{subsubsection}{3}%
  \z@{.5\linespacing\@plus.7\linespacing}{-.5em}%
  {\normalfont\scshape}}
\def\specialsection{\@startsection{section}{1}%
  \z@{\linespacing\@plus\linespacing}{.5\linespacing}%
  {\normalfont\centering\large\bfseries\scshape}}
\renewenvironment{proof}[1][\proofname]{\par
\pushQED{\qed}%
\normalfont \topsep4\p@\@plus4\p@\relax
\trivlist
\item[\hskip\labelsep
\bfseries
#1\@addpunct{.}]\ignorespaces
}
{
\popQED\endtrivlist\@endpefalse
}
\newcommand \Dotfill {\leavevmode \leaders \hb@xt@ 6pt{\hss .\hss }\hfill \kern \z@}
\def\@tocline#1#2#3#4#5#6#7{\relax
  \ifnum #1>\c@tocdepth 
  \else
    \par \addpenalty\@secpenalty\addvspace{#2}%
    \begingroup \hyphenpenalty\@M
    \@ifempty{#4}{%
      \@tempdima\csname r@tocindent\number#1\endcsname\relax
    }{%
      \@tempdima#4\relax
    }%
    \parindent\z@ \leftskip#3\relax \advance\leftskip\@tempdima\relax
    \rightskip\@pnumwidth plus4em \parfillskip-\@pnumwidth
    #5\leavevmode\hskip-\@tempdima
      \ifcase #1
       \or\or \hskip 1.65em \or \hskip 3.3em \else \hskip 4.95em \fi%
      #6\nobreak\relax
    \Dotfill
    \hbox to\@pnumwidth{\@tocpagenum{#7}}\par
    \nobreak
    \endgroup
  \fi}
\def\l@section{\@tocline{1}{0pt}{1pc}{}{\scshape}}
\renewcommand{\tocsection}[3]{%
\indentlabel{\@ifnotempty{#2}{\ignorespaces#1 #2.\hskip 0.7em}}#3}
\def\l@subsection{\@tocline{2}{0pt}{1pc}{5pc}{}}
\def\l@subsubsection{\@tocline{3}{0pt}{1pc}{7pc}{}}
\numberwithin{equation}{section}
\newtheoremstyle{mytheorem}{.7\linespacing\@plus.3\linespacing}{.7\linespacing\@plus.3\linespacing}%
     {\itshape}
     {}
     {\bfseries}
     {. }
     {0.3ex}
     {\thmname{{\bfseries #1}}\thmnumber{ {\bfseries #2}}\thmnote{ (#3)}}  
\theoremstyle{mytheorem}
\newtheorem{theorem}{Theorem}[section]
\newtheorem{lemma}[theorem]{Lemma}
\newtheorem{proposition}[theorem]{Proposition}
\newtheorem{corollary}[theorem]{Corollary}
\newtheorem{remark}[theorem]{Remark}
\newtheorem{definition}[theorem]{Definition}
\newcommand{\cB}{{\ensuremath{\mathcal B}} }
\newcommand{\cL}{{\ensuremath{\mathcal L}} }
\renewcommand{\tilde}{\widetilde}          
\DeclareMathSymbol{\leqslant}{\mathalpha}{AMSa}{"36} 
\DeclareMathSymbol{\geqslant}{\mathalpha}{AMSa}{"3E} 
\DeclareMathSymbol{\eset}{\mathalpha}{AMSb}{"3F}     
\newcommand{\PP}{\ensuremath{\mathbb{P}}}
\newcommand{\EE}{\ensuremath{\mathbb{E}}}
\renewcommand{\epsilon}{\varepsilon}
\renewcommand{\theta}{\vartheta}
\renewcommand{\rho}{\varrho}
\newenvironment{myenumerate}{%
\renewcommand{\theenumi}{\arabic{enumi}}%
\renewcommand{\labelenumi}{{\rm(\theenumi)}}%
\begin{list}{\labelenumi}
	{%
	\setlength{\itemsep}{0.4em}%
	\setlength{\topsep}{0.5em}%
	\setlength\leftmargin{2.45em}%
	\setlength\labelwidth{2.05em}%
	\setlength{\labelsep}{0.4em}%
	\usecounter{enumi}%
	}%
	}%
{\end{list}
}
\renewenvironment{enumerate}{
\begin{myenumerate}}%
{\end{myenumerate}}
\newenvironment{myitemize}{%
\begin{list}{$\bullet$}%
 	{%
	\setlength{\itemsep}{0.4em}%
	\setlength{\topsep}{0.5em}%
	\setlength\leftmargin{2.45em}%
	\setlength\labelwidth{2.05em}%
	\setlength{\labelsep}{0.4em}%
	}%
	}%
{\end{list}}
\renewenvironment{itemize}{
\begin{myitemize}}%
{\end{myitemize}}
\newcommand{\cpl}{\mathrm{cpl}}
\newcommand{\free}{\textup{free}}
\newcommand{\overbar}[1]{\mkern 1.5mu\overline{\mkern-1.5mu#1\mkern-1.5mu}\mkern 1.5mu}
\newcommand{\rectangle}{{%
  \ooalign{$\sqsubset\mkern3mu$\cr$\mkern5mu\vphantom{}^\bullet$\cr$\mkern3mu\sqsupset$\cr}%
}}
\newcommand{\R}{\mathbb{R}}
\newcommand{\Z}{\mathbb{Z}}
\newcommand{\N}{\mathbb{N}}
\newcommand{\Ham}{\ensuremath{\mathbf{H}}}
\newcommand{\Hamd}{\dot{\Ham}}
\newcommand{\Hrw}{\textup{H}^{\textup{RW}}}
\newcommand{\HrwN}{\textup{H}^{\textup{RW},N}}
\let\c@table\c@figure
\begin{document}
\title[Tightness of discrete Gibbsian line ensembles]{Tightness of discrete Gibbsian line ensembles with exponential interaction Hamiltonians}

\author[Xuan Wu]{Xuan Wu}
\address{Xuan Wu, Columbia University,
Department of Mathematics,
2990 Broadway,
New York, NY 10027, USA}
\email{xuanw@math.columbia.edu}

\begin{abstract}
In this paper we introduce a framework to prove tightness of a sequence of discrete Gibbsian line ensembles $\cL^N = \{\cL_k^1(u), \cL_k^2(u),\dots  \}$, which is a countable collection of random curves. The sequence of discrete line ensembles $\cL^N$ we consider enjoys a resampling invariance property, which we call $(\Ham^N,\HrwN)$-Gibbs property. We also assume that $\cL^N$ satisfies technical assumptions A1-A4 on $(\Ham^N, \HrwN)$ and the assumption that the lowest labeled curve with a parabolic shift, $\cL_1^N(u) +  {u^2}/{2}$, converges weakly to a stationary process in the topology of uniform convergence on compact sets. Under these assumptions, we prove our main result Theorem~\ref{theorem:main} that $\cL^N$ is tight as a sequence of line ensembles and that the $\Ham$-Brownian Gibbs property holds for all subsequential limit line ensembles with $\Ham(x)= e^x$. 

As an application of Theorem~\ref{theorem:main}, under the weak noise scaling, we show that the scaled log-gamma line ensemble $\overbar{\mathcal{L}}^N$ is tight, which is a sequence of discrete line ensembles associated with the log-gamma polymer model via the geometric RSK correspondence. The $\Ham$-Brownian Gibbs property (with $\Ham(x) = e^x$) of its subsequential limits also follows. 
\end{abstract}
\maketitle

\section{Introduction}
There is a large class of stochastic integrable models from random matrix theory, last passage percolation, and more generally from the Kardar-Parisi-Zhang (KPZ) universality class that naturally carry the structure of random paths with some Gibbsian resampling invariance. One particularly interesting and central example is the Airy line ensemble \cite{CH14} $ \mathcal{A} = \{\mathcal{A}_1 > \mathcal{A}_2 > \cdots \}$, a collection of non-intersecting  continuous random curves indexed by $\mathbb{N}$, see Table~\ref{Airy} for some basic properties of the Airy line ensemble $\mathcal{A}$. 
\begin{table}[h!]
     \begin{center}
     \begin{tabular}{c p{8cm}}    
    \cmidrule(r){1-2}
     \raisebox{-\totalheight}{\includegraphics[width=6cm, height=4cm]{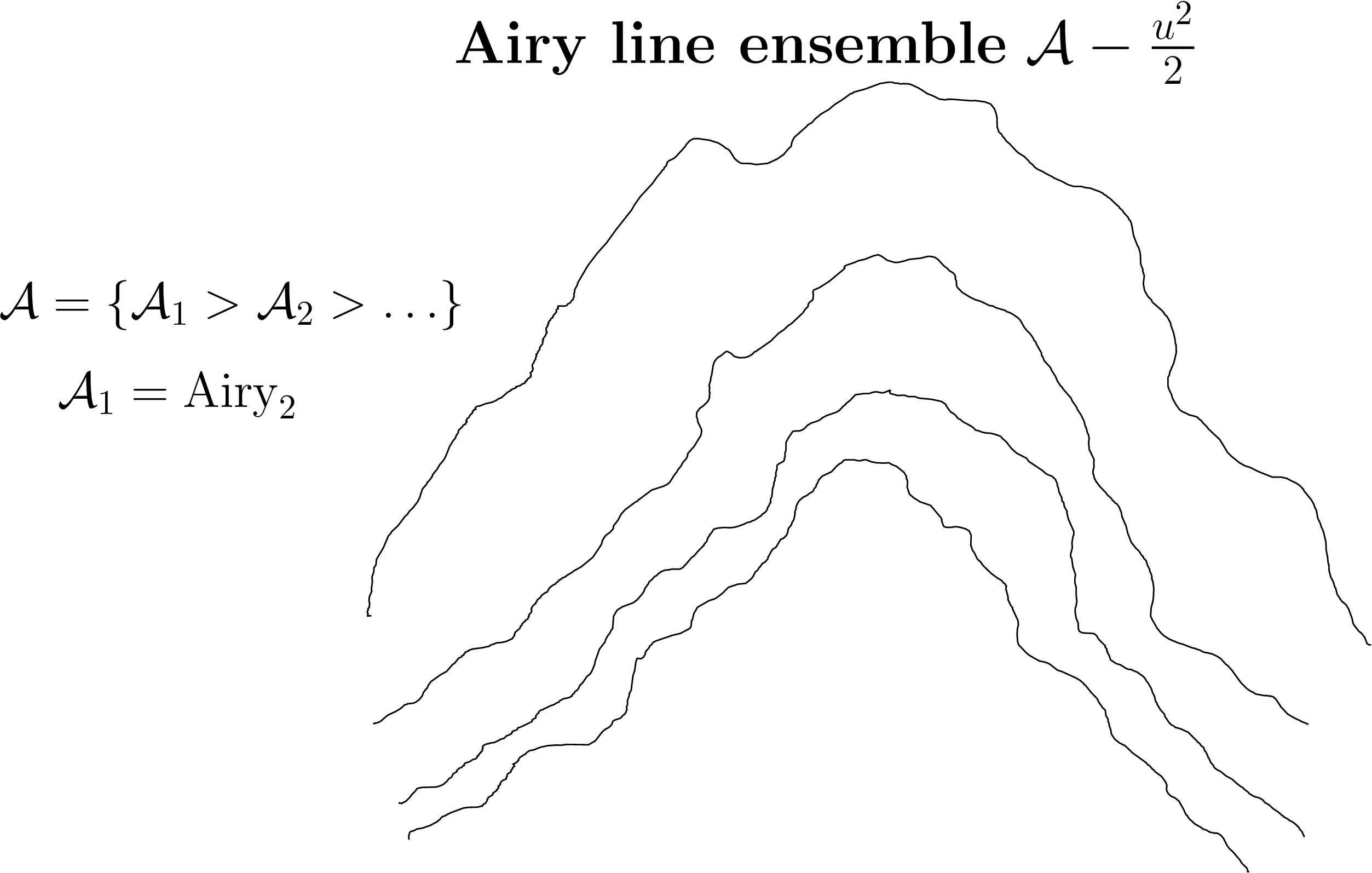}}
      &  
      \begin{itemize}
      \item  Non-intersecting paths, i.e. for any $u\in \mathbb{R}$, $\mathcal{A}_1(u) > \mathcal{A}_2(u) > \cdots$
      \item  Top curve $\mathcal{A}_1$ is $\textup{Airy}_2$ process.
      \item  $\mathcal{A}(u)$ is the Airy point process for $u \in \mathbb{R}$ fixed.
      \item Brownian Gibbs property of Airy line ensemble after subtracting a parabola $\frac{u^2}{2}$.
      \end{itemize}
        \\      
      \hline
      \end{tabular}
      \caption{Illustration of Airy line ensemble $\mathcal{A}$, with four curves drawn. }
      \label{Airy}
      \end{center}
      \end{table}
      
The Airy line ensemble $\mathcal{A}$ has been proven to be a universal edge scaling limit of a wide range of models, e.g. Gaussian unitary ensemble, Dyson Brownian motion, Brownian last passage percolation, polynuclear growth model, see \cite{OY, PS, DNV}. Another beautiful aspect about Airy line ensemble $\mathcal{A}$ is that the $\textup{Airy}_2$ process and the Airy point process are embedded together into $\mathcal{A}$. Moreover, after the subtraction of a parabola, $\tilde{\mathcal{A}}:= \mathcal{A}-\frac{u^2}{2}$ enjoys the Brownian Gibbs property (introduced in \cite{CH14}), which is a spatial Markov property and also a global resampling invariance property under the following resampling process. Taking any $a < b \in \mathbb{R}, k\in\mathbb{N}$, first remove the trajectory of the $k$-th curve $\tilde{\mathcal{A}}_k$ between $[a,b]$ and then resample a trajectory according to the law of a Brownian bridge which avoids the upper curve $\tilde{\mathcal{A}}_{k-1}$ and the lower curve $\tilde{\mathcal{A}}_{k+1}$ (note that we could run the same process for finite adjacent curves by resampling non-intersecting Brownian bridges). In other words, conditioned on the values of $\tilde{\mathcal{A}}$ outside a compact set $C = \{k_1, k_1+1, \cdots, k_2\} \times [a,b]$, the law of $\tilde{\mathcal{A}}$ inside $C$ only depends on the boundary data (i.e. independent of values of $\cL$ outside $C$). Furthermore, this conditional law of $\mathcal{A}$ on $C$ is equivalent to the law of Brownian bridges with endpoints to be $\vec{x}= (\mathcal{A}_{k_1}(a),\cdots, \mathcal{A}_{k_2}(a))$ and $\vec{y}= (\mathcal{A}_{k_1}(b),\cdots, \mathcal{A}_{k_2}(b))$ conditioned not to intersect (including not to touch upper and lower boundaries $\mathcal{A}_{k_1-1}$ and $\mathcal{A}_{k_2+1}$). 

The construction of the Airy line ensemble \cite{PS, AM, CH14} is a marriage of integrability and probability, through taking a functional limit of Brownian watermelon under edge scaling limit, see Figure~\ref{watermelon}.
\begin{figure}[h]
\begin{center}
\includegraphics[width=10cm, height = 6.5cm]{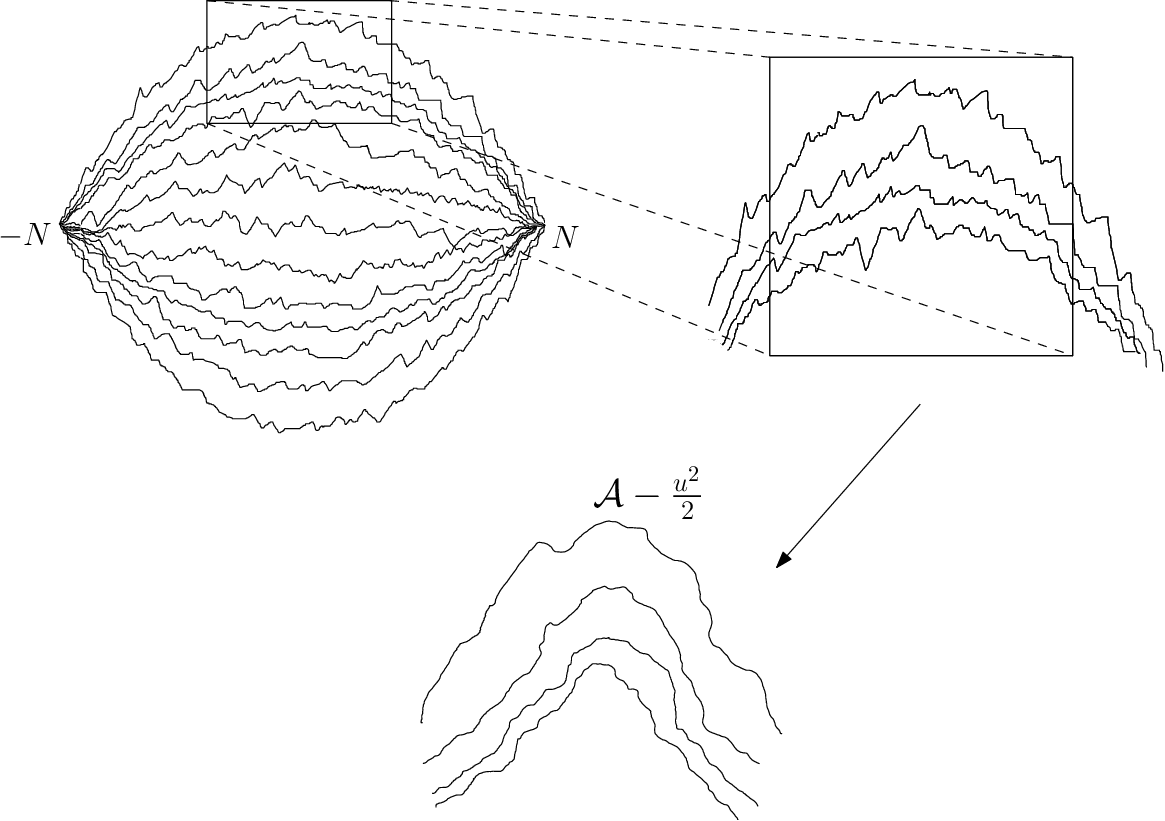}
\end{center}
\caption{Airy line ensemble as the edge scaling limit of Brownian watermelon.}
\label{watermelon}
\end{figure}
In the top left we have a Brownian watermelon $\mathcal{B} :=\{B_1, \cdots, B_N\}$, a collection of $N$ Brownian bridges $B_i: [-N, N] \rightarrow \mathbb{R}, B_i(-N) = B_i(N) = 0, 1\leq i \leq N$, conditioned not to intersect. The edge scaling limit of this system is obtained by taking a
weak limit as $N\rightarrow \infty$ of the collection of curves scaled so that the point $(0, 2
^{1/2}N)$ is fixed and space is squeezed, horizontally by a factor of $N
^{2/3}$ and vertically by $N^{1/3}$. Tightness under such edge scaling was established in \cite{CH14} by extensively exploiting the Brownian Gibbs property of Brownian watermelon (which naturally holds by the construction of Brownian watermelon) and showed the tightness. Moreover, it is demonstrated in \cite{CH14} that this Brownian Gibbs property survives under weak convergence of line ensembles, i.e. $\mathcal{A}-\frac{u^2}{2}$ enjoys the Brownian Gibbs property.

The KPZ equation is a central model in the KPZ universality class for random growth processes, interacting particle systems, and directed polymers, see \cite{Cor, QS}. Many discrete growth models have a tunable asymmetry and the KPZ equation appears as a continuum limit in the diffusive time scale (known as the weak scaling) as this parameter is critically tuned close to zero,  e.g. ASEP, directed polymers, stochastic vertex models, see \cite{BG, AKQ, CGHT, Lin}. 

Upon discovery, the Brownian Gibbs property has served as a powerful probabilistic tool. Recently, one of the authors of \cite{CH14}, Hammond, developed a more delicate treatment in \cite{Ham1} for Brownian Gibbs resampling invariance to estimate the modulus of continuity for line ensembles with Brownian Gibbs property (e.g. Airy line ensemble and the line ensemble associated with Brownian last passage percolation). Hammond also established $L^p$-norm bounds (for finite $p > 0$) on Radon-Nikodym derivative of the line ensemble curves (with an affine shift) with respect to Brownian bridges and other refined regularity properties. Furthermore in the subsequent papers \cite{Ham2,Ham3,Ham4}, the work in \cite{Ham1} was applied to understanding the geometry of last passage paths in Brownian last passage percolation with more general initial data. Another breakthrough is the construction of the Directed Landscape \cite{DOV}, the conjectural central limit of the KPZ universality class \cite{CQR}, where they found that the Airy sheet is already embedded in the Airy line ensemble and the Brownian Gibbs property is a key input of their estimates.

As the Brownian Gibbs property for Airy line ensemble has proven to be a powerful probabilistic resampling method, it is motivating to embed and study the KPZ equation as the top curve of some Gibbsian line ensemble $\mathcal{H}$. This is successfully constructed and explored in \cite{CH16}, called the KPZ line ensemble. The construction of the KPZ line ensemble in\cite{CH16} is based on subsequential extraction through O’Connell-Yor semi-discrete direct polymers and the characterization of the KPZ line ensemble through O’Connell-Warren’s \cite{OW} multilayer extension of the solution to the stochastic heat equation with narrow wedge initial data are established in \cite{Nic}.

While the non-intersecting property for Airy line ensemble $\mathcal{A}$ being a nature of the zero-temperature models, the curves of KPZ line ensemble $\mathcal{H}$ now could go out of order but subject to an exponential penalization. More specifically, the KPZ line ensemble enjoys the $\Ham$-Brownian Gibbs property, which is a more general type of Gibbs property compared to Brownian Gibbs property. The $\Ham$-Brownian Gibbs property for $\mathcal{H}$ specifies the law of $\mathcal{H}$ inside a compact set $C:= [k_1, k_2]\times [a,b]$ conditioned on the values of $\mathcal{H}$ outside $C$ such that the conditional law is equivalent to that of a few independent Brownian bridges reweighted by a penalization factor for being out order, see an illustration in Figure~\ref{KPZ_line}.
\begin{figure}
 \includegraphics[width=0.7\textwidth]{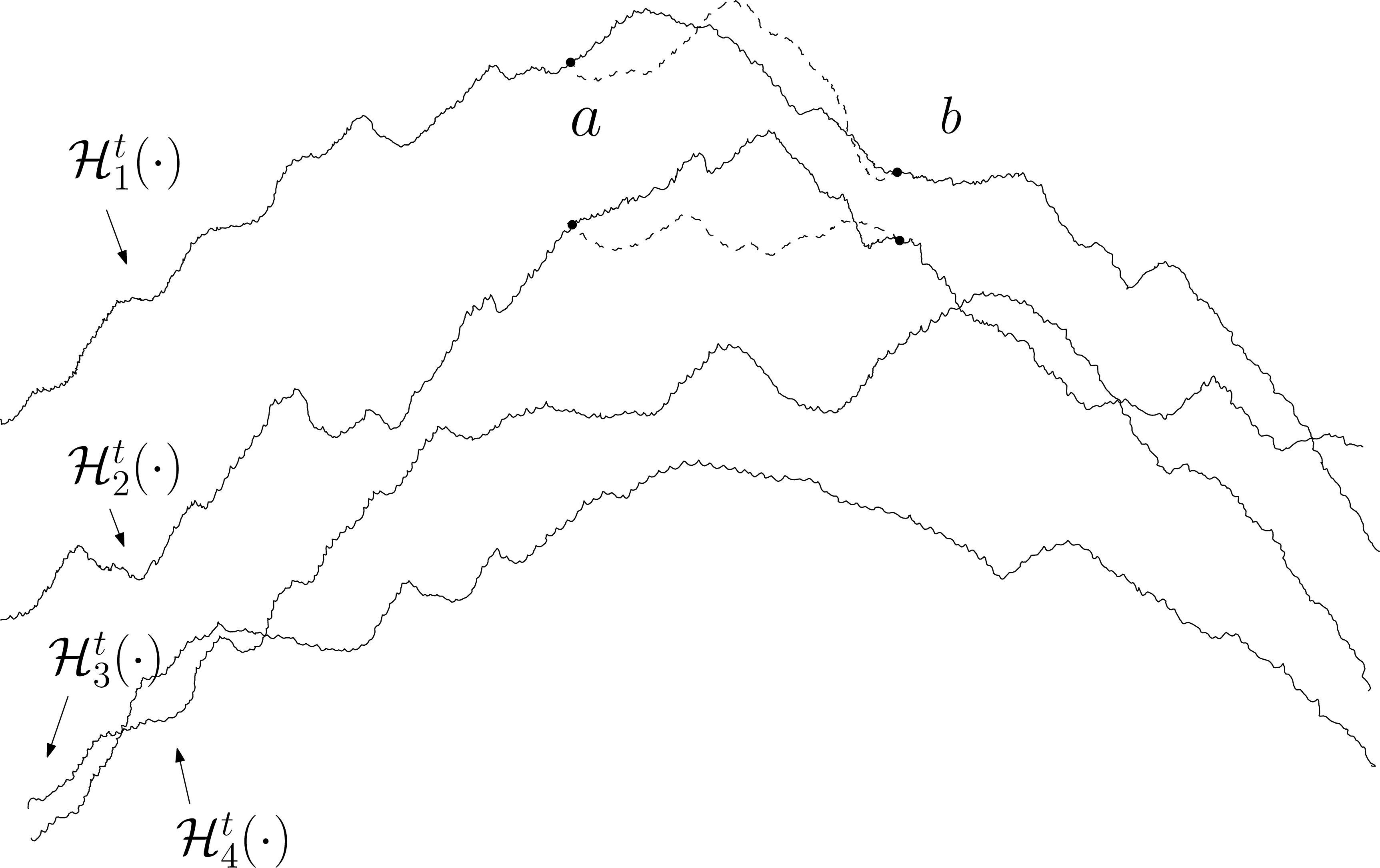}
 \caption{An overview of $\textrm{KPZ}_t$ line ensemble $\mathcal{H}^t$ for fixed $t$. Curves $\mathcal{H}^t_1(\cdot)$ through $\mathcal{H}^t_4(\cdot)$ are sampled. The lowest indexed curve $\mathcal{H}^t_1$ is distributed according to the time $t$ solution to KPZ equation with narrow wedge initial data. The dotted first two curves $\mathcal{H}^t_1$ and $\mathcal{H}^t_2$ between $a$ and $b$ indicate a possible resampling, as a demonstration for the $\Ham$-Brownian Gibbs property.}
 \label{KPZ_line}
\end{figure} 

A longstanding conjecture about the KPZ equation is that the solution of the narrow wedge initial data KPZ equation converges to the $\textrm{Airy}_2$ process (as $t$ goes to infinity) with a parabolic shift, under horizontal scaling by $t^{2/3}$ and vertical scaling by $t^{1/3}$. While the full conjecture is widely open, there has been breakthrough on the one point convergence, see \cite{SS, ACQ}. This conjecture was further strengthen in \cite{CH16} that the the KPZ line ensemble should converge to the Airy line ensemble under the same scaling. \cite{CH16} also provided a plausible route to this conjecture and one of the key steps is to characterize Airy line ensemble without relying on the determinantal formula of its finite dinmensional distributions, thus providing a new method for proving convergence in the KPZ universality class. A very recent work \cite{DM} showed that the Airy line ensemble could be characterized by the finite-dimensional marginals of its top curve and the Brownian Gibbs property.

While there have been many successes for the study of continuous Gibbs line ensembles in the KPZ universality class, the discrete Gibbsian line ensembles are also worth exploration and will be the focus of this paper, due to the richness of discrete integrable models in the KPZ universality class. The discrete Gibbsian line ensembles enjoy a discrete analogue resampling invariance of the previous Brownian Gibbs property. We call such resampling invariance random walk Gibbs property to emphasize that for line ensembles of Brownian Gibbs property or more generally, $\Ham$-Brownian Gibbs property, the underlying paths resemble Brownian bridges, while for discrete Gibbsian line ensembles, the underlying paths resemble random walk bridges. To give a few examples, through various versions of the Robinson-Schensted-Knuth (RSK) correspondence \cite{O'Con1}, one can link geometric last passage percolation to non-intersecting random walk bridges with geometric jumps, exponential last passage percolation to non-intersecting random walk bridges with exponential jumps, see \cite{DNV} for a recent study on the uniform convergence to Airy line ensemble for these two line ensembles. 

In this paper, we aim to study a sequence of log-gamma discrete line ensemble $\mathcal{L}$ with a discrete Gibbs resampling invariance property, which we call $(\Hamd,\Hrw)$-Gibbs property. The construction of this line ensemble $\mathcal{L}$ and its $(\Hamd,\Hrw)$-Gibbs property come from the study in \cite{COSZ} of a geometric RSK correspondence, when applied to the log-gamma directed polymers. The directed polymer model was introduced in the statistical physics literature by Huse and Henley \cite{HH} in 1985 and received first rigorous mathematical treatment in 1988 by Imbrie and Spencer \cite{IS}. The monograph \cite{Com} is a great resource for the foundational work in this area. Among directed polymers, the log-gamma directed polymer model was first introduced in \cite{Sep}, which is special in the same way as the last-passage percolation model with exponential or geometric weights is special among corner growth models, namely, both demonstrate integrable structures and permit explicit computations.

We further apply the weak noise scaling to the log-gamma line ensemble $\mathcal{L}$. This scaling regime is known as the intermediate disorder regime in \cite{AKQ}, where they showed the convergence of directed polymers with general random environment to the KPZ equation with narrow wedge initial data, hence establishing the weak KPZ universality for directed polymers. Denoting $\overbar{\mathcal{L}}^N$ as the scaled log-gamma line ensemble (see Figure~\ref{log-gamma-line} for an illustration), we want to take a functional limit of $\overbar{\mathcal{L}}^N$ as $N$ goes to infinity (in the topology of uniform convergence for continuous functions on compact sets). Moreover we prove that the $\Ham$-Brownian Gibbs property with $\Ham(x) = e^x$ (the same Gibbs property as enjoyed by the KPZ line ensemble $\mathcal{H}$) for all subsequential limits of $\overbar{\mathcal{L}}^N$.
\begin{figure}
 \includegraphics[width=0.7\textwidth]{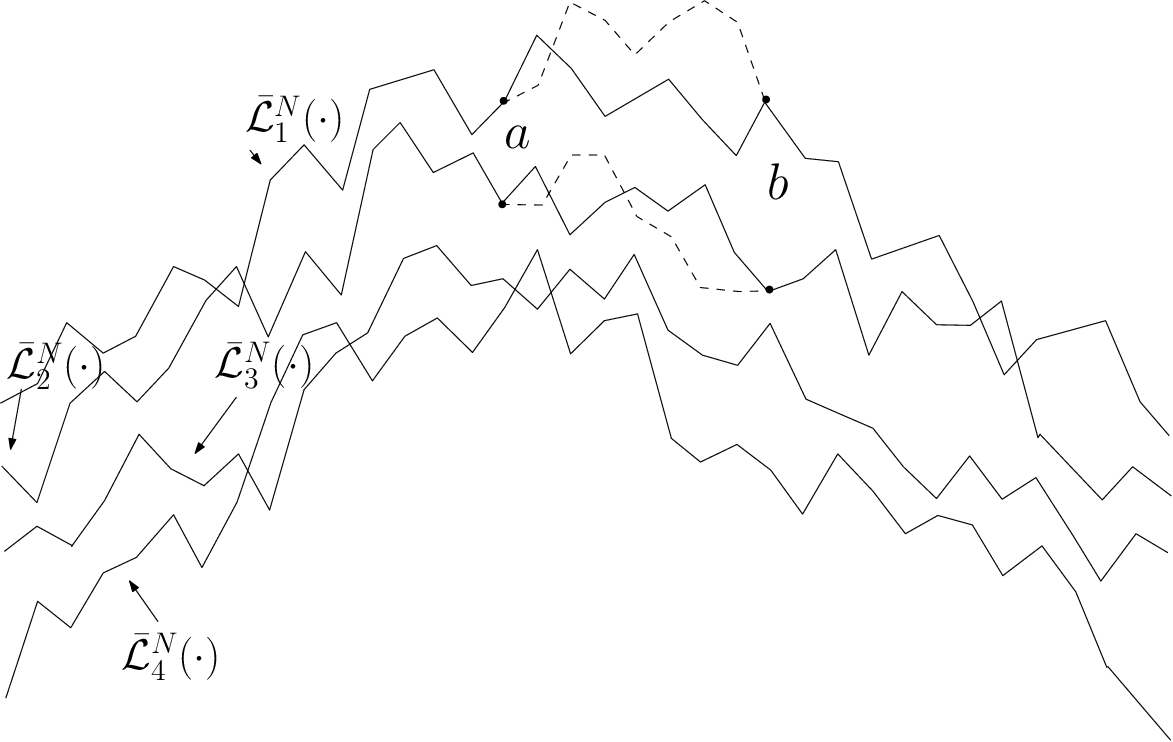}
 \caption{An overview of scaled log-gamma line ensemble $\overbar{\mathcal{L}}^N$. Discrete curves $\overbar{\mathcal{L}}^N_1(\cdot)$ through $\overbar{\mathcal{L}}^N_4(\cdot)$ are sampled on $\frac{2}{\sqrt{N}}\mathbb{Z}$ (linked through the linear interpolation between two adjacent points). The lowest indexed curve $\overbar{\mathcal{L}}^N_1(\cdot)$ converges weakly to time $t$ solution to the KPZ equation with narrow wedge initial data as $N$ goes to infinity. The dotted first two curves $\overbar{\mathcal{L}}^N_1(\cdot)$ and $\overbar{\mathcal{L}}^N_2(\cdot)$ between $a$ and $b$ indicate a possible resampling, as a demonstration for the random walk Gibbs property.}
 \label{log-gamma-line}
\end{figure}

Instead of working directly with the sequence of scaled log-gamma line ensembles $\overbar{\mathcal{L}}^N$, we introduce a general framework for studying the tightness of a sequence of $(\Hamd^N,\HrwN)$-Gibbs line ensembles, see our main result Theorem~\ref{theorem:main}. We propose assumptions A1-A4 that capture the properties enjoyed by $(\Hamd^N,\HrwN)$ that we rely on. To summarize here, consider a sequence of discrete line ensembles $\mathcal{L}^N_{i}(u), (i,u)\in \{1,\cdots, K\} \times \frac{1}{N}\mathbb{Z}$, which enjoys $(\Hamd^N,\HrwN)$-Gibbs property such that $(\Hamd^N,\HrwN)$ satisfy assumptions A1-A4, and assume that $\mathcal{L}^N_1(u)+u^2/2$ (defined through linear interpolation) converges weakly to a stationary process, we obtain the following result (Theorem~\ref{theorem:main} in the main text),
\begin{enumerate}
\item For any $T>0$ and $1\leq k \leq K$, the restriction of the line ensemble $\mathcal{L}^N$ to $\{1, \cdots, k\}\times [-T,T]$ is sequentially compact as $N$ varies. 

\item Any subsequential limit line ensemble $\mathcal{L}^{\infty}$ satisfies $\Ham$-Brownian Gibbs property with $\Ham(x)=e^x$.
\end{enumerate}

We then verify that the scaled log-gamma line ensembles $\overbar{\mathcal{L}}^N$ fall into the category where this theorem applies and obtain the above results for $\overbar{\mathcal{L}}^N$.

It is worth mentioning that another particularly successful instance of the discrete Gibbs line ensemble is studied in \cite{CD}, where the authors investigated a discrete Gibbsian line ensemble related to the ascending Hall-Littlewood process (a special case of the Macdonald processes \cite{BC}). By developing discrete analogues of the arguments in \cite{CH16}, \cite{CD} were successful in establishing the long-predicted 2/3 critical exponent for the asymmetric simple exclusion process (ASEP). 

\subsection*{Notation}
We describe notations used in this paper. Given $k_1<k_2$ with $k_1,k_2\in \mathbb{Z}$, we denote $[k_1,k_2]_{\mathbb{Z}}:=\{k_1,k_1+1,\dots, k_2\}$ and $(k_1,k_2)_{\mathbb{Z}}:\{k_1+1,\dots, k_2-1\}$. Given a subset

\subsection*{Outline}
Subsection~\ref{def:basics of line ensembles} and Subsection~\ref{def:Brownian and random walk gibbs property} describe the general setting of line ensembles and introduce the main objects studied in this paper, discrete line ensembles with random walk Gibbs property. Subsection~\ref{Assumptions} states assumptions A1-A4, under which the main theorem is also stated. The proof of main Theorem~\ref{theorem:main} is in Subsection~\ref{proof of main_thm (1)} and Subsection~\ref{proof of main_thm (2)}. Subsection~\ref{KMT_assumption} provides estimates on random walk bridges and discrete Gibbs line ensembles and Section~\ref{propproofs} contains the proofs of two key propositions. These are the main technical results in this paper. Section~\ref{sec:log_gamma} presents one interesting application of our main theorem to the scaled log-gamma line ensemble. Appendix~\ref{appendix} contains a proof for monotonicity Lemma~\ref{lem:monotone}.

\subsection*{Acknowledgements} The author is deeply grateful to Ivan Corwin for his consistent support and many useful suggestions. The author also thanks Evgeni Dimitrov for many helpful discussions and as well as Ivan Corwin and Vu Lan Nguyen for their efforts and initial contributions in a earlier draft of this project. The author was supported by Ivan Corwin through the NSF grants DMS-1811143, DMS-1664650 and also by the Minerva Foundation Summer Fellowship program.
	
\section{Gibbsian line ensembles and the main result}
We first introduce the basic notions of line ensembles in Subsection~\ref{def:basics of line ensembles} and then define the main objects of study in this paper -- Brownian and random walk Gibbsian line ensembles in Subsection~\ref{def:Brownian and random walk gibbs property}. Lastly in Subsection~\ref{Assumptions}, we list the assumptions A1-A4, under which we formulate the main result Theorem~\ref{theorem:main} of this paper.

\subsection{Basics about line ensembles}\label{def:basics of line ensembles}
\begin{definition}\label{def:line-ensemble}
Let $\Sigma$ be an interval of $\Z$ and let $\Lambda$ be a subset of $\R$. Consider the set $C(\Sigma\times \Lambda,\R)$ of continuous functions $f:\Sigma\times\Lambda \to \R$ endowed with the topology of uniform convergence on compact subsets of $\Sigma\times \Lambda$, and let $\mathcal{C}(\Sigma\times \Lambda,\R)$ denote the sigma-field generated by Borel sets in $C(\Sigma\times \Lambda,\R)$. A $\Sigma\times \Lambda$-indexed line ensemble $\mathcal{L}$ is a random variable on a probability space $(\Omega,\mathcal{B},\PP)$, taking values in $C(\Sigma\times \Lambda,\R)$ such that $\mathcal{L}$ is a measurable function from $\mathcal{B}$ to $\mathcal{C}(\Sigma\times \Lambda,\R)$.
\end{definition}

For integers $k_1<k_2$, let $[k_1,k_2]_{\Z} := \{k_1,k_1+1,\ldots,k_2\}$. When $\Lambda$ is a discrete subset of $\R$, it is possible to extend the line ensemble to one with $\Lambda$ replaced by its convex hull (i.e. the minimal interval of $\R$ containing all points of $\Lambda$). Under this extension, the lines of $\mathcal{L}$ are extended by linear interpolation and the convergence of $\mathcal{L}$ implies the convergence of the extension. We will sometimes abuse notations and conflate a discrete line ensemble defined on a discrete set $\Lambda$ with its linearly interpolated ensemble. Also we will generally write $\mathcal{L}:\Sigma\times \Lambda\to \R$ even though it is not $\mathcal{L}$, but rather $\mathcal{L}(\omega)$ for each $\omega\in \Omega$ which is such a function. We will also sometimes specify a line ensemble by only giving its law without reference to the underlying probability space.  We write $\mathcal{L}_i(\cdot):= \big(\mathcal{L}(\omega)\big)(i,\cdot)$ for the label $i\in \Sigma$ curve of the ensemble $\mathcal{L}$. 

\begin{definition}\label{def:CONVERGE}
Given a $\Sigma\times \Lambda$-indexed line ensemble $\mathcal{L}$ and a sequence of such ensembles $\left\{\mathcal{L}^N\right\}_{N\geq 1}$, we will say that $\mathcal{L}^N$ converges to $\mathcal{L}$ weakly
as a line ensemble if for all bounded continuous functions $F:C(\Sigma\times \Lambda,\R)\to \R$, as $N\to \infty$,
	\begin{equation*}
		\int F\big(\mathcal{L}^N(\omega)\big)d\mathbb{P}^N(\omega) \to \int F\big(\mathcal{L}(\omega)\big) d\mathbb{P}(\omega).
	\end{equation*}
This is equivalent to weak-$*$ convergence in $C(\Sigma\times \Lambda,\R)$ endowed with the topology of uniform convergence on compact subsets of $\Sigma\times \Lambda$.
\end{definition}

We will define two types of Gibbsian bridge line ensembles -- those whose underlying path measures are given by Brownian motions, and those given by discrete time random walks. We start with the Brownian case.
 
\begin{definition}\label{def:BrownianBridge}
For any $L>0$, we denote by $B_L:[0,L]\to\mathbb{R}$ the standard Brownian bridge with $B_L(0)=B_L(L)=0$. 
\end{definition}

\begin{definition}\label{def:H_Brownian}
Fix $k_1\leq k_2$ with $k_1,k_2 \in \mathbb{Z}$, an interval $[a,b]\subset \mathbb{R}$ and two vectors $\vec{x},\vec{y}\in \mathbb{R}^{k_2-k_1+1}$. We use $\mathbb{P}^{k_1,k_2,(a,b),\vec{x},\vec{y}}_{\free}$ to denote the law of a $[k_1,k_2]_{\Z}\times [a,b]$-indexed line ensemble $\mathcal{L} = (\mathcal{L}_{k_1},\ldots,\mathcal{L}_{k_2})$ in which each $\mathcal{L}_k$ is independent of other indexed curves and $$\mathcal{L}_k(u)\overset{(d)}{=} x_k+\frac{u-a}{b-a}(y_k-x_k)+B_{b-a}(u-a).$$ 

A Hamiltonian $\Ham$ is defined to be a continuous function $\Ham:\mathbb{R}\cup\{-\infty\}\to [0,\infty]$ with $\Ham (-\infty)=0$. Let $f:(a,b)\to \mathbb{R}\cup\{+\infty\}$ and $g:(a,b)\to \mathbb{R}\cup\{-\infty\}$ be measurables function with $\inf_{u\in (a,b)} f(u)>-\infty$ and $\sup_{u\in (a,b)} g(u)< \infty$. Let $\vec{x},\vec{y}\in \mathbb{R}^{k_{2}-k_1+1}$ be two vectors. For $\mathcal{L}=(\mathcal{L}_{k_1},\dots,\mathcal{L}_{k_2})\in C([k_1,k_2]_{\mathbb{Z}}\times [a,b],\mathbb{R}), $ define the \textit{Boltzmann weight} to be
\begin{equation}\label{def:Boltzmann_Brownian}
W_{\Ham}^{k_1,k_2,(a,b),\vec{x},\vec{y},f,g}(\mathcal{L}):= \exp\Bigg\{-\sum_{i=k_1-1}^{k_2}\int_a^b \Ham\Big(\mathcal{L}_{i+1}(u)-\mathcal{L}_{i}(u)\Big)du\Bigg\}.
\end{equation}
Here we adopt convention that $\mathcal{L}_{k_1-1}=f$, $\mathcal{L}_{k_2+1}=g$. The normalizing constant is defined by
\begin{equation}\label{def:normalcont_Brownian}
Z_{\Ham}^{k_1,k_2,(a,b),\vec{x},\vec{y},f,g} :=\mathbb{E}^{k_1,k_2,(a,b),\vec{x},\vec{y}}_{\free}\Big[W_{\Ham}^{k_1,k_2,(a,b),\vec{x},\vec{y},f,g}(\mathcal{L})\Big],
\end{equation}
where $\mathcal{L}$ in the above expectation is distributed according to the measure $\mathbb{P}^{k_1,k_2,(a,b),\vec{x},\vec{y}}_{\free}$. See Remark~\ref{rmk:Wmeasurable} for the measurability of the $W_{\Ham}^{k_1,k_2,(a,b),\vec{x},\vec{y},f,g}(\mathcal{L})$.\\

Suppose that 
\begin{equation}
 Z_{\Ham}^{k_1,k_2,(a,b),\vec{x},\vec{y},f,g} >0. 
\end{equation}
We define the $\Ham$-Brownian bridge line ensemble with entrance data $\vec{x}$, exit data $\vec{y}$ and boundary data $(f,g)$ to be a $[k_1,k_2]_{\Z}\times [a,b]$-indexed line ensemble with law $\mathbb{P}^{k_1,k_2,(a,b),\vec{x},\vec{y},f,g}_{\Ham}$ given according to the following Radon-Nikodym derivative relation:
\begin{equation*}
\frac{d\mathbb{P}_{\Ham}^{k_1,k_2,(a,b),\vec{x},\vec{y},f,g}}{d\mathbb{P}_{\free}^{k_1,k_2,(a,b),\vec{x},\vec{y}}}(\mathcal{L}) := \frac{W_{\Ham}^{k_1,k_2,(a,b),\vec{x},\vec{y},f,g}(\mathcal{L})}{Z_{\Ham}^{k_1,k_2,(a,b),\vec{x},\vec{y},f,g}}.
\end{equation*}
\end{definition}

\begin{remark}\label{rmk:Wmeasurable}
In this remark we discuss the measurability of the Boltzmann weight. By the continuity of $\mathbf{H}$ and $\mathbf{H}(-\infty)=0$, $\mathbf{H}(x)$ is uniformly continuous and bounded on $[-\infty,M]$ for any $M\in\mathbb{R}$. From this, it can be checked that for any functions $f$ and $g$ as in Definition~\ref{def:H_Brownian}, $W_{\Ham}^{k_1,k_2,(a,b),\vec{x},\vec{y},f,g}$ is a continuous function on $C([k_1,k_2]_{\mathbb{Z}}\times [a,b],\mathbb{R})$. This implies we can take the expectation in \eqref{def:normalcont_Brownian}. 
\end{remark}

In Definition \ref{def:H_Brownian}, we use Brownian bridges to build our line ensemble. We now describe how we may similarly construct discrete line ensembles in terms of random walk bridges. Random walks come in different flavors based on the choice of continuous versus discrete time, and continuous versus discrete jump distributions. In principle, for each such choice we can run the same type of construction as below. In this paper we focus on discrete time and continuous jump distributions, as it is suitable for our eventual application to study the line ensemble associated to the log-gamma directed polymers as introduced in \cite{Sep} and further studied in \cite{COSZ}.

We start by defining $\Hrw$-random walk bridges using the Hamiltonian function $\Hrw$, as well as various line ensembles built off of them.

\begin{definition}\label{def:RW}
A random walk Hamiltonian is a continuous function $\Hrw:\R \to (-\infty,\infty)$ such that
\begin{equation*}
\int_{\R} \exp\big(-\Hrw(x)\big)dx= 1.
\end{equation*}
Let $X_i$, $i\in\mathbb{N}$ i.i.d. random variables with probability density function given by $\exp(-\Hrw(x))$. For any $k\in\mathbb{N}$, we write $S(k)= X_1+X_2+\dots +X_k$ for the $k$-th step random walk. We adopt the convention that $S(0)\equiv 0$. The p.d.f. of $S_k$ can be inductively defined through 
\begin{align}\label{def:fk}
f_1(x):=\exp\left( -\Hrw (x) \right),\  f_k(x):=\int_{\mathbb{R}} f_1(x-y) f_{k-1}(y)\, dy.   
\end{align} 
Given $n\in\mathbb{N}$ and $z\in\mathbb{R}$, we denote by $\{S_{n,z}(k)\}_{k=0}^n$ the random walk $\{S(k)\}_{k=0}^n$ conditioned on $S(n)=z$. In other words,
\begin{equation}\label{equ:1225716}
S_{n,z}(k):=\, S(k)\, |\, S(L)=z.
\end{equation}
We often view $S_{n,z}(u)$ as continuous function on $u\in [0,n]$ through linear interpolation. 
\end{definition}
In the lemma below, we show that $S_{n,z}$ can be coupled continuously in one probability space. The proof can be found in Appendix~\ref{sec:rwb}.
\begin{lemma}\label{lem:RW_coupling}
Fix $n\in\mathbb{N}$. There exists a probability space $(\Omega_n,\mathcal{B}_n,\mathbb{P}_n)$ and a measurable map $\mathbf{G}_n:\mathbb{R}\times\Omega_n\to C([0,n],\mathbb{R})$ such that the following holds. For any $z\in\mathbb{R}$, the law of $\mathbf{G}_n(z,\cdot)$ is given by $S_{n,z}$ defined in Definition~\ref{def:RW}. Moreover, for any $\omega\in \Omega_n$, $\mathbf{G}_n(\cdot ,\omega)$ is continuous in the first variable. 
\end{lemma}

Next, we fix a discrete set and scale the random walk bridges accordingly.

\begin{definition}\label{def:RW_ensemble}
Fix $d>0$ and define a discrete set as $\Lambda_d := d\mathbb{Z} $. For $a<b$, let $\Lambda_d(a,b) = (a,b)\cap \Lambda_d$, $\Lambda_d[a,b] = [a,b]\cap \Lambda_d$ and likewise for half open / half close intervals.

Fix $L\in d\mathbb{N}$, $z\in\mathbb{R}$ and a random walk Hamiltonian $\Hrw$. Recall that $S_{d^{-1}L,z}$ is the random walk bridge on $[0,d^{-1}L]_{\mathbb{Z}}$ defined in Definition~\ref{def:RW}. We define	 
\begin{align*}
 \bar{S}_{L,z}(u):=  S_{d^{-1}L,z}(d^{-1}u ).  
\end{align*}  

Fix $k_1\leq k_2$ with $k_1, k_2\in \Z$, $a<b$ with $a,b \in \Lambda_d$ and two vectors  $\vec{x},\vec{y}\in \mathbb{R}^{k_2-k_1+1}$. We use $\PP_{\free,\Hrw}^{k_1,k_2,\Lambda_d(a,b),\vec{x},\vec{y}}$ to denote the law of a $[k_1,k_2]_{\Z}\times \Lambda_d[a,b]$-indexed line ensemble $\mathcal{L} = (\mathcal{L}_{k_1},\ldots,\mathcal{L}_{k_2})$ in which each $\mathcal{L}_k$ is independent of other indexed curves and $$\mathcal{L}_k(u)\overset{(d)}{=} x_k+\bar{S}_{b-a,y_k-x_k}(u-a).$$ 

\end{definition}
\begin{definition}\label{def:RWB_Gibbs_ensemble}
A local interaction Hamiltonian is a continuous function $\Hamd:(\R\cup \{\pm \infty\})^6 \to [0,\infty]$ (see Remark \ref{rem.local} for an explanation of the meaning of each slot of $\Hamd$). Fix $a<b$ with $a,b\in \Lambda_d$. Let $f:\Lambda_d(a,b)\to \mathbb{R}\cup\{+\infty\}$ and $g:\Lambda_d(a,b)\to \mathbb{R}\cup\{-\infty\}$ be functions. Let $\vec{x},\vec{y}\in \mathbb{R}^{k_{2}-k_1+1}$ be two vectors. For $\mathcal{L}=(\mathcal{L}_{k_1},\dots,\mathcal{L}_{k_2})$ in $ C([k_1,k_2]_{\Z}\times \Lambda_d[a,b],\mathbb{R}), $ define the \textit{Boltzmann weight} to be 
\begin{equation}\label{def:Boltzmann_randomwalk}
W_{\Hamd}^{k_1,k_2,\Lambda_d(a,b),\vec{x},\vec{y},f,g}(\mathcal{L}):= \exp\bigg\{-\sum_{k=k_1-1}^{k_2}\sum_{u \in \Lambda_d(a,b)} \Hamd\big(\rectangle(\mathcal{L},k,u )\big)\bigg\},
\end{equation}
where we adopt convention that $\mathcal{L}_{k_1-1}=f$, $\mathcal{L}_{k_2+1}=g$ and
\begin{equation}
\label{def:boxplus}
\rectangle(\mathcal{L},k,u ) = \big(\mathcal{L}_{k}(u-d ), \mathcal{L}_{k}(u ),\mathcal{L}_{k}(u+d),\mathcal{L}_{k+1}(u-d), \mathcal{L}_{k+1}(u ),\mathcal{L}_{k+1}(u+d)\big),
\end{equation}
see Figure~\ref{boxplus} below for an illustration of the inputs for $\rectangle(\mathcal{L},k,u)$. The normalizing is defined by 
\begin{equation}\label{def:normal_randomwalk}
Z_{\Hamd,\Hrw}^{k_1,k_2,\Lambda_d(a,b),\vec{x},\vec{y},f,g} =\mathbb{E}_{\free,\Hrw}^{k_1,k_2,\Lambda_d(a,b),\vec{x},\vec{y}}\Big[W_{\Hamd}^{k_1,k_2,\Lambda_d(a,b),\vec{x},\vec{y},f,g}(\mathcal{L})\Big],
\end{equation}
where $\mathcal{L}$ in the above expectation is distributed according to the measure $\mathbb{P}_{\free,\Hrw}^{k_1,k_2,\Lambda_d(a,b),\vec{x},\vec{y}}$.\\

Suppose that 
\begin{equation}\label{equ:Z>0}
Z_{\Hamd,\Hrw}^{k_1,k_2,\Lambda_d(a,b),\vec{x},\vec{y},f,g} >0. 
\end{equation}
We define the $(\Hamd,\Hrw)$-random walk bridge line ensemble with entrance data $\vec{x}$, exit data $\vec{y}$ and boundary data $(f,g)$ to be a $[k_1,k_2]_{\Z}\times \Lambda_d[a,b]$-indexed line ensemble $\mathcal{L}=(\mathcal{L}_{k_1},\ldots, \mathcal{L}_{k_2})$ with law $\mathbb{P}_{\Hamd,\Hrw}^{k_1,k_2,\Lambda_d(a,b),\vec{x},\vec{y},f,g}$ given according to the following Radon-Nikodym derivative relation:
\begin{equation*}
\frac{d\mathbb{P}_{\Hamd,\Hrw}^{k_1,k_2,\Lambda_d(a,b),\vec{x},\vec{y},f,g}}{d\mathbb{P}_{\free,\Hrw}^{k_1,k_2,\Lambda_d(a,b),\vec{x},\vec{y}}}(\mathcal{L}) = \frac{W_{\Hamd}^{k_1,k_2,\Lambda_d(a,b),\vec{x},\vec{y},f,g}(\mathcal{L})}{Z_{\Hamd,\Hrw}^{k_1,k_2,\Lambda_d(a,b),\vec{x},\vec{y},f,g}}.
\end{equation*}
\centering
\includegraphics[width= 0.4\textwidth, height=3cm]{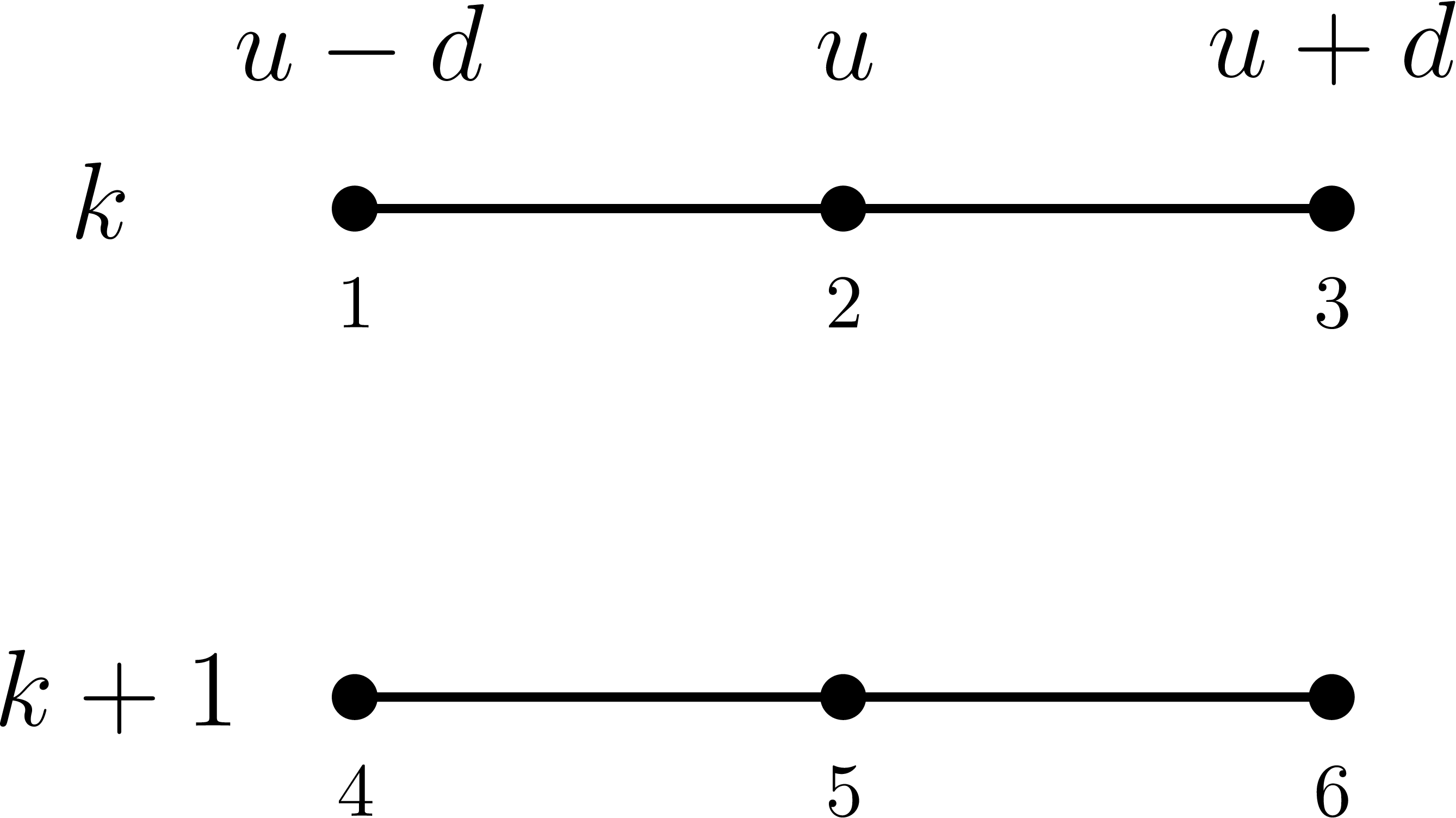}
\captionof{figure}{The points labeled with 1-6 correspond to the six inputs in \eqref{def:boxplus}.}
\label{boxplus}
\end{definition}

\begin{remark}\label{rem.local}
The function $\rectangle(\mathcal{L},k,u_m)$ defined in \eqref{def:boxplus} returns the point $\mathcal{L}_{k}(u_m)$ as well as five of its neighbors (in terms of the index space) corresponding to $k$ and $k+1$, as well as $u_{m-1}, u_{m}, u_{m+1}$. The interaction Hamiltonians we consider are nearest neighbor in that a single curve at a single time will only depend on the curve above and below it at that same time or plus or minus one time increment. The function $\rectangle(\mathcal{L},k,u_{m})$ is used in specifying the interaction felt by curve $k$ with respect to curve $k+1$. One could consider longer range interaction Hamiltonians, though it would require extending the boundary data necessary to specify the Gibbs property.
\end{remark}

In the next lemma, we show that the normalizing constants defined in \eqref{def:normalcont_Brownian} and \eqref{def:normal_randomwalk} are measurable with respect to the boundary data.
\begin{lemma}\label{lem:Zmeasurable}
Continue with the notations of Definition~\ref{def:H_Brownian}. For any bounded continuous function $F:C([k_1,k_2]_{\mathbb{Z}}\times [a,b],\mathbb{R})\to\mathbb{R}$, 
$$\EE^{k_1,k_2,(a,b),\vec{x},\vec{y} }_{\free}\Big[F(\mathcal{  \mathcal{L} })\cdot W^{k_1,k_2,(a,b),\vec{x},\vec{y},f,g}_{\mathbf{H}}(\mathcal{L}))\Big]$$
is a measurable function in $$(\vec{x},\vec{y},f,g)\in \mathbb{R}^{k_2-k_1+1}\times \mathbb{R}^{k_2-k_1+1}\times C([a,b],\mathbb{R}\cup\{\infty\} )\times C([a,b],\mathbb{R}\cup\{-\infty\}). $$

Continue with the notations of Definition~\ref{def:RWB_Gibbs_ensemble}. For any bounded continuous function $F:C([k_1,k_2]_{\mathbb{Z}}\times \Lambda_d[a,b],\mathbb{R})\to\mathbb{R}$, 
$$\EE^{k_1,k_2,\Lambda_d(a,b),\vec{x},\vec{y}}_{\free,\Hrw}\Big[F(\mathcal{\mathcal{L} })\cdot W_{\Hamd}^{k_1,k_2,\Lambda_d(a,b),\vec{x},\vec{y},f,g}(\mathcal{L}) \Big]$$
is a measurable function in $$(\vec{x},\vec{y},f,g)\in \mathbb{R}^{k_2-k_1+1}\times \mathbb{R}^{k_2-k_1+1}\times C(\Lambda_d[a,b],\mathbb{R}\cup\{\infty\})\times C(\Lambda_d[a,b],\mathbb{R}\cup\{-\infty\}). $$  
\end{lemma}
\begin{proof}
For simplicity, we denote
\begin{align*}
\mathfrak{X}:= \mathbb{R}^{k_2-k_1+1}\times C([a,b],\mathbb{R}\cup\{\infty\} )\times C([a,b],\mathbb{R}\cup\{-\infty\}).
\end{align*}
Let $\vec{0}$ be the zero vector in $\mathbb{R}^{k_2-k_1-1}$ and $\mathcal{L}'(\omega)=(\mathcal{L}'_{k_1}(\omega),\dots,\mathcal{L}'_{k_2}(\omega))$ be distributed according to $\mathbb{P}^{k_1,k_2,(a,b),\vec{0},\vec{0}}_{\free}$ defined in Definition~\ref{def:H_Brownian}. We will write $\Omega$ for the corresponding probability space. Given $\vec{x},\vec{y}\in \mathbb{R}^{k_2-k_1-1}$, define $\mathcal{L}(\vec{x},\vec{y},\omega)=(\mathcal{L}_{k_1}(\omega),\dots,\mathcal{L}_{k_2}(\omega))$ by
$$\big(\mathcal{L}_k(\omega)\big)(u)=x_k+\frac{u-a}{b-a}(y_k-x_k)+ \big(\mathcal{L}'_k(\omega) \big)(u).$$
Define the function $Q:\mathfrak{X} \times\Omega\to \mathbb{R}$ by
\begin{align*}
Q(\vec{x},\vec{y},f,g,\omega):=F(\mathcal{L}(\vec{x},\vec{y},\omega))\cdot W_{\Ham}^{k_1,k_2,(a,b),\vec{x},\vec{y},f,g}(\mathcal{L}(\vec{x},\vec{y},\omega)).
\end{align*}
By the continuity of $\mathbf{H}$, $Q$ is measurable on $\mathfrak{X} \times\Omega$. By the Fubini–Tonelli theorem,
\begin{align*}
\EE^{k_1,k_2,(a,b),\vec{x},\vec{y} }_{\free}\Big[F(\mathcal{  \mathcal{L} })\cdot W^{k_1,k_2,(a,b),\vec{x},\vec{y},f,g}_{\mathbf{H}}(\mathcal{L}))\Big]=\int_\Omega Q(\vec{x},\vec{y},f,g,\omega)\, d\mathbb{P}^{k_1,k_2,(a,b),\vec{0},\vec{0}}_{\free}(\omega)
\end{align*}
is measurable in $\mathfrak{X}$. The argument for the discrete case is analogous except that the construction of $\mathcal{L}(\vec{x},\vec{y},\omega)$ relies on Lemma~\ref{lem:RW_coupling}.
\end{proof}

\subsection{$\Ham$-Brownian Gibbs property and discrete $(\Hamd,\Hrw)$-Gibbs property }\label{def:Brownian and random walk gibbs property}
The Gibbs property for a line ensemble can be thought of as a spatial version of the Markov property whereby the distribution of a field in a given compact region depends entirely on the distribution of the field on the region's boundary. For a line ensemble which enjoys a Gibbs property, this distribution conditional on the boundary is specified exactly via the type of Radon-Nikodym derivative prescriptions in Definitions \ref{def:H_Brownian} and \ref{def:RWB_Gibbs_ensemble}.

\begin{definition}[$\Ham$-Brownian Gibbs property]\label{def:Brownian-H-Gibbsproperty}
Let $\Sigma$ be an interval in $\mathbb{N}$, $\Lambda$ an interval in $\mathbb{R}$ and $\Ham$ a Hamiltonian function. A $\Sigma\times \Lambda$-indexed line ensemble $\cL$ defined on a probability space $(\Omega, \cB, \PP)$ has the $\Ham$-Brownian Gibbs property if the following holds. Fix arbitrary $[k_1,k_2]_{\mathbb{Z}}\subset \Sigma$, $(a,b)\subset \Lambda$ and a bounded continuous function $F: C([k_1,k_2]\times (a,b), \mathbb{R})\rightarrow\mathbb{R}$. It holds $\PP$-almost surely that
\begin{align}\label{eqn:H-Brownian-Gibbs}
\EE\Big[F\big(\mathcal{L}\vert_{[k_1,k_2]_{\Z}\times (a,b)}\big)\big\vert \mathcal{F}_{\textrm{ext}}\big([k_1,k_2]_{\Z}\times  (a,b)\big)\Big]= \EE^{k_1,k_2,(a,b),\vec{x},\vec{y},f,g}_{\Ham}\Big[F(\mathcal{\tilde L})\Big].
\end{align}
Here 
\begin{equation*}
\mathcal{F}_{\textrm{ext}}\big([k_1,k_2]_{\Z}\times (a,b)\big):=\sigma\big(\mathcal{L}_k(u):(k,u)\in \Sigma\times\Lambda\backslash [k_1,k_2]_{\Z}\times (a,b)\big)
\end{equation*}
is the sigma field generated by the line ensemble outside $[k_1,k_2]_{\Z}\times (a,b)$. The ensemble $\mathcal{\tilde L}$ on the right-hand side of \eqref{eqn:H-Brownian-Gibbs} is independently drawn and the expectation is taken according to the $\PP^{k_1,k_2,(a,b),\vec{x},\vec{y},f,g}_{\Ham}$ measure, where we have entrance data $\vec{x} = \big(\mathcal{L}_{k_1}(a),\ldots, \mathcal{L}_{k_2}(a)\big)$, exit data $\vec{y} = \big(\mathcal{L}_{k_1}(b),\ldots, \mathcal{L}_{k_2}(b)\big)$, upper boundary curve $f=\mathcal{L}_{k_1-1}|_{(a,b)}$ and lower boundary curve $g=\mathcal{L}_{k_2+1}|_{(a,b)}$. By convention if $k_1-1\notin \Sigma$ then $\mathcal{L}_{k_1-1}$ is assumed to be everywhere $+\infty$ and likewise if $k_2+1\notin\Sigma$ then $\mathcal{L}_{k_2+1}$ is assumed to be everywhere $-\infty$. Note that from Lemma~\ref{lem:Zmeasurable}, the right hand side of \eqref{eqn:H-Brownian-Gibbs} is $\mathcal{F}_{\textrm{ext}}\big([k_1,k_2]_{\Z}\times \Lambda_d(a,b)\big)$-measurable. 

The equality in \eqref{eqn:H-Brownian-Gibbs} is $\PP$-almost surely as $\mathcal{F}_{\textrm{ext}}\big([k_1,k_2]_{\Z}\times (a,b)\big)$-measurable random variables. The Gibbs property can alternatively be stated in terms of conditional distributions. In that case, conditional on the above defined entrance data $\vec{x}$, exit data $\vec{y}$ and boundary data $(f,g)$, the law of $\mathcal{L}\vert_{[k_1,k_2]_{\Z}\times (a,b)}$ is given by $\PP^{k_1,k_2,(a,b),\vec{x},\vec{y},f,g}_{\Ham}$.
\end{definition}

\begin{definition}[Discrete $(\Hamd,\Hrw)$-Gibbs property]\label{def:Gibbs-property}
Fix an interval $\Sigma$ of $\N$ and a discrete set $\Lambda_d = d\mathbb{Z}$. Consider a $\Sigma\times\Lambda_d$-indexed line ensemble $\mathcal{L}$ defined on a probability space $(\Omega,\mathcal{B},\PP)$. For a random walk Hamiltonian $\Hrw$ and a local interaction Hamiltonian $\Hamd$ (see Definition \ref{def:RWB_Gibbs_ensemble}), we say that that the line ensemble $\mathcal{L}$ enjoys the $(\Hamd,\Hrw)$-random walk Gibbs property if for any $k_1<k_2$ with $k_1,k_2\in \Sigma$, any $a<b$ with $a,b\in \Lambda_d$ and any continuous bounded function $F:C([k_1,k_2]_{\Z}\times \Lambda_d(a,b),\R)\to \R$, $\PP$-almost surely
\begin{equation}\label{eqn:Gibbs-property}
\EE\Big[F\big(\mathcal{L}\vert_{[k_1,k_2]_{\Z}\times \Lambda_d(a,b)}\big)\big\vert \mathcal{F}_{\textrm{ext}}\big([k_1,k_2]_{\Z}\times \Lambda_d(a,b)\big)\Big]= \EE^{k_1,k_2,\Lambda_d(a,b),\vec{x},\vec{y},f,g}_{\Hamd,\Hrw}\Big[F(\mathcal{\tilde L})\Big].
\end{equation}
In the above,
\begin{equation*} 
\mathcal{F}_{\textrm{ext}}\big([k_1,k_2]_{\Z}\times \Lambda_d(a,b)\big):=\sigma\big(\mathcal{L}_k(u):(k,u)\in \Sigma\times\Lambda_d\backslash [k_1,k_2]_{\Z}\times \Lambda_d(a,b)\big)
\end{equation*}
is the sigma field generated by the (discrete) line ensemble outside $[k_1,k_2]_{\Z}\times \Lambda_d(a,b)$. The ensemble $\mathcal{\tilde L}$ on the right-hand side of \eqref{eqn:Gibbs-property} is independently drawn according to the $\PP^{k_1,k_2,\Lambda_d(a,b),\vec{x},\vec{y},f,g}_{\Hamd,\Hrw}$ measure, where we have entrance data $\vec{x} = \big(\mathcal{L}_{k_1}(a),\ldots, \mathcal{L}_{k_2}(a)\big)$, exit data $\vec{y} = \big(\mathcal{L}_{k_1}(b),\ldots, \mathcal{L}_{k_2}(b)\big)$, upper boundary curve $f=\mathcal{L}_{k_1-1}|_{\Lambda_d(a,b)}$ and lower boundary curve $g=\mathcal{L}_{k_2+1}|_{\Lambda_d(a,b)}$. By convention if $k_1-1\notin \Sigma$ then $\mathcal{L}_{k_1-1}$ is assumed to be everywhere $+\infty$ and likewise if $k_2+1\notin\Sigma$ then $\mathcal{L}_{k_2+1}$ is assumed to be everywhere $-\infty$. Note that from Lemma~\ref{lem:Zmeasurable}, the right hand side of \eqref{eqn:Gibbs-property} is $\mathcal{F}_{\textrm{ext}}\big([k_1,k_2]_{\Z}\times \Lambda_d(a,b)\big)$-measurable. 

The equality in \eqref{eqn:Gibbs-property} is $\PP$-almost surely as $\mathcal{F}_{\textrm{ext}}\big([k_1,k_2]_{\Z}\times \Lambda_d(a,b)\big)$-measurable random variables. The Gibbs property can alternatively be stated in terms of conditional distributions. In that case, conditional on the above defined entrance data $\vec{x}$, exit data $\vec{y}$ and boundary data $(f,g)$, the law of $\mathcal{L}\vert_{[k_1,k_2]_{\Z}\times \Lambda_d(a,b)}$ is given by $\PP^{k_1,k_2,\Lambda_d(a,b),\vec{x},\vec{y},f,g}_{\Hamd,\Hrw}$.
\end{definition}

\begin{remark}
An $\Ham$-Brownian bridge line ensemble naturally enjoys the $\Ham$-Brownian Gibbs property by definition. An $(\Hamd,\Hrw)$-random walk bridge line ensemble enjoys the $(\Hamd,\Hrw)$-random walk Gibbs property, which follows from the locality of the interaction Hamiltonian and the random walk bridge Hamiltonian.
\end{remark}

Just as the strong Markov property extends the Markov property to stopping times, we may (following \cite{CH14,CH16}) define stopping domains (Definition~\ref{def:stopping_time}) and appeal to the strong Gibbs property (Lemma~\ref{lem:strong_gibbs_property}). Note that in the case of discrete time $\Lambda_d$, the proof of this is considerably simpler than in continuous time (just as for the discrete versus continuous Markov processes). We do not provide the proof of this result as it is a simplified version of the proof of \cite[Lemma 2.5]{CH14}.
	
\begin{definition}\label{def:stopping_time}
Continue with the notations of Definition \ref{def:Gibbs-property}. A pair of random vairables $(l, r)$ which take values in $\Lambda_d$ is called a $[k_1,k_2]_{\Z}$-stopping domain if for all $a\leq  b$ with $a,b \in \Lambda_d$, it holds that
\begin{equation*}
\{l\leq a, r\geq b\}\in \mathcal{F}_{\textrm{ext}}\big([k_1,k_2]_{\Z}\times \Lambda_d(a,b)\big).
\end{equation*}
For a $[k_1,k_2]_{\Z}$-stopping domain $(l,r)$, let $\mathcal{F}_{\textrm{ext}}\big([k_1,k_2]_{\Z}\times \Lambda_d(l,r)\big)$ be the collection of measurable events $\mathsf{A}$ that satisfies
\begin{align*}
\mathsf{A}\cap \{l\leq a, r\geq b\}\in \mathcal{F}_{\textrm{ext}}\big([k_1,k_2]_{\Z}\times \Lambda_d(a,b)\big)
\end{align*}
for all $a\leq b$ with $a,b \in\Lambda_d$. Define the space
 \begin{equation*}
		\mathbb{C}^{k_1,k_2}:=\Big\{\big (l,r,f_{k_1},\cdots,f_{k_2}\big):l\leq r\in \Lambda_d \ \textrm{and}\ (f_{k_1},\cdots,f_{k_2})\in C([k_1,k_2]_{\Z}\times \Lambda_d [l,r],\R)\Big\}.
	\end{equation*}
We equip $\mathbb{C}^{k_1,k_2}$ with the topology induced by the restriction map $\Lambda_d\times\Lambda_d\times C([k_1,k_2]_{\Z}\times \Lambda_d,\R)\to\mathbb{C}^{k_1,k_2}$. Note that because $\Lambda_d$ is discrete, this topology is the same as the disjoint union of $C([k_1,k_2]_{\Z}\times \Lambda_d,\R)$ among different pairs of $(l, r)$.
\end{definition}

Because of the discreteness of $\Lambda_d$, the following strong $(\Hamd,\Hrw)$-Gibbs property property follows $(\Hamd,\Hrw)$-Gibbs property.
\begin{lemma}\label{lem:strong_gibbs_property}
Continuing with the notation of Definition \ref{def:Gibbs-property} and \ref{def:stopping_time}, if $(l,r)$ is a $[k_1,k_2]_\Z$-stopping domain for a line ensemble $\mathcal{L}$ which enjoys the $(\Hamd,\Hrw)$-random walk Gibbs property, then for any continuous bounded function $F:\mathbb{C}^{k_1,k_2}\to\mathbb{R} $, it holds $\PP$-almost surely that
\begin{equation}\label{equ:strong_gibbs_property}
\begin{split}
&\EE\Big[F\big(l,r,\mathcal{L}\vert_{[k_1,k_2]_{\Z}\times \Lambda_d(l,r)}\big)\big\vert \mathcal{F}_{\textrm{ext}}\big([k_1,k_2]_{\Z}\times \Lambda_d(l,r)\big)\Big]\\
=&\EE^{k_1,k_2,\Lambda_d[l,r],\vec{x},\vec{y},f,g}_{\Hamd,\Hrw}\Big[F(l,r, \mathcal{\tilde L})\Big].
\end{split}
\end{equation}
Here we have entrance data $\vec{x} = \big(\mathcal{L}_{k_1}(l),\ldots, \mathcal{L}_{k_2}(l)\big)$, exit data $\vec{y} = \big(\mathcal{L}_{k_1}(r),\ldots, \mathcal{L}_{k_2}(r)\big)$, upper boundary curve $f=\mathcal{L}_{k_1-1}|_{\Lambda_d[l,r]}$ and lower boundary curve $g=\mathcal{L}_{k_2+1}|_{\Lambda_d[l,r]}$. The ensemble $\mathcal{\tilde L}$ on the right-hand side above is independently draw according to the $\PP^{k_1,k_2,\Lambda_d[l,r],\vec{x},\vec{y},f,g}_{\Hamd,\Hrw}$ measure. From Lemma~\ref{lem:Zmeasurable}, for any $a,b\in \Lambda_d$, we have
\begin{align*}
\EE^{k_1,k_2,\Lambda_d[l,r],\vec{x},\vec{y},f,g}_{\Hamd,\Hrw}\Big[F(l,r, \mathcal{\tilde L})\Big]\cap\{l=a,r=b\}
\end{align*}
is measurable in $\mathcal{F}_{\textrm{ext}}\big([k_1,k_2]_{\Z}\times \Lambda_d(a,b)\big)$. Because $l$ and $r$ take discrete values, the right hand side of \eqref{equ:strong_gibbs_property} is measurable in $\mathcal{F}_{\textrm{ext}}\big([k_1,k_2]_{\Z}\times \Lambda_d(l,r)\big)$.
\end{lemma}

\subsection{Assumptions on $(\Hamd^N, \HrwN)$ and the main result}\label{Assumptions}
In this section we make four key assumptions on the interaction Hamiltonians and the random walk Hamiltonians. The convexity assumptions A1 and A2 are used to show the crucial monotone coupling Lemma~\ref{lem:monotone}. The assumption A3 ensures that interaction Hamiltonians approach $\Ham(x) = e^x$. The assumption A4 is about another important ingredient, the KMT type coupling between random walk bridges and Brownian bridges. The existence of such coupling for random walk bridges is the main topic studied in \cite{DW}.\\

{\raggedleft \bf Assumption A1.} $\Hamd$ satisfies the following properties: 

(1)\ $\Hamd(\vec{a})$ is non-increasing in terms of $a_1,a_2,a_3$ and is non-decreasing in terms of $a_4,a_5,a_6$. Moreover, $\Hamd(a_1,a_2,a_3,a_4,a_5,a_6)<\infty$ provided $a_1,a_2,a_3\in \mathbb{R}\cup\{\infty\}$ and $a_4,a_5,a_6\in \mathbb{R}\cup\{-\infty\}$.

(2)\ Let $\vec{a},\ \vec{b}\in (\R\cup \{\pm \infty\})^6$ and $\delta>0$. Suppose $a_i\geq b_i$ for $i=1,2,\dots, 6$ and $a_k=b_k$ for some $k\in \{1,2,\dots, 6\}$. Denote 
\begin{equation*}
a'_i=\left\{
\begin{array}{cc}
a_k+\delta & i=k\\
a_i & i\neq k
\end{array}
\right.,\ \ 
b'_i=\left\{
\begin{array}{cc}
b_k+\delta & i=k\\
b_i & i\neq k
\end{array}
\right..
\end{equation*}
Then for any $\vec{a},\vec{b}\in (\R\cup \{\pm \infty\})^6$ and any $\delta>0$, we require
\begin{align*}
-\Hamd(\vec{a}')+\Hamd(\vec{a})\geq -\Hamd(\vec{b}')+\Hamd(\vec{b}).
\end{align*}

{\raggedleft \bf Assumption A2.} The random walk Hamiltonian function $\Hrw: \mathbb{R} \rightarrow (-\infty, \infty)$ is convex. \\

For a convex Hamiltonian $\Ham$, it is known in the work of \cite{CH16} that the $\Ham$-Brownian bridge line ensemble has certain monotonicity  properties. For example, if $f$, $g$, $\vec{x}$ or $\vec{y}$ increase pointwise, then the resulting line ensemble can be coupled to the original one so as to dominate it pointwise. We extend these properties to line ensemble under Assumption A1 on $\Hamd$ and convexity of $\Hrw$. Without such convexity, the constructive proof we give for monotonicity fails. We remark that \cite{CD} involves a line ensemble which lacks this convexity. Therein they develop a new, weaker type of monotonicity (in terms of certain expectation values and up to certain constants) which turns out to be sufficient for proving tightness in the manner of \cite{CH16}.
	
We have the following lemma which allows us to couple different discrete line ensembles.

\begin{lemma}\label{lem:monotone}
Fix  $k_1\leq k_2$, $a < b \in \Lambda_d$. For $i=1,2$, fix vectors $(\vec{x}^i,\vec{y}^i)\in \mathbb{R}^{k_2-k_1+1}$, and functions $f^i:\Lambda_d(a,b)\to \mathbb{R}\cup \{\infty\}$, $g^i:\Lambda_d(a,b)\to \mathbb{R}\cup \{-\infty\}$. Suppose that for $i=1,2$, 
\begin{align*}
Z_{\Hamd,\Hrw}^{k_1,k_2,\Lambda_d(a,b),\vec{x}^i,\vec{y}^i,f^i,g^i}>0.
\end{align*}
See \eqref{def:normal_randomwalk} for the definition. For $i\in \{1,2\}$, let $\mathcal{Q}^i=\{\mathcal{Q}^i_j \}_{j=k_1}^{k_2}$ be a $[k_1,k_2]_{\mathbb{Z}}\times \Lambda_d[a,b]$-indexed line ensemble on  a probability space $(\Omega^i,\mathcal{B}^i,\mathbb{P}^i)$ where
$\mathbb{P}^i=\mathbb{P}^{k_1,k_2,\Lambda_d(a,b),\vec{x}^i,\vec{y}^i,f^i,g^i}_{\Hamd,\Hrw}$. 

Assume $\Hamd$ satisfies Assumption A1 and $\Hrw$ satisfies Assumption A2. Assume that the $i=1$ vectors and functions are pointwise greater than or equal to their $i=2$ counterparts (e.g. $f^1(u)\geq f^2(u)$ for all $u\in \Lambda_d(a,b)$). Then there exists a coupling of the probability measure
$\mathbb{P}^1$ and $\mathbb{P}^2$ such that almost surely $\mathcal{Q}^1_j(u)\leq \mathcal{Q}^2_j(u)$ for all $j\in [k_1,k_2]_{\mathbb{Z}}$ and $u\in \Lambda_d[a,b]$.
\end{lemma}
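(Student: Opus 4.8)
The plan is to prove this monotonicity lemma by a constructive coupling, following the philosophy of the corresponding continuous result in \cite{CH16} but carried out directly in the discrete setting, where the underlying random walk bridge measures live on the finite set $\Lambda_d[a,b]$. The key point is that, because $\Lambda_d[a,b]$ is a finite set and the jump distribution $e^{-H^{RW}}$ is log-concave (Assumption A2), the law $\mathbb{P}^{\Lambda_d[a,b],x,y}_{H^{RW}}$ of a single random walk bridge is a finite-dimensional measure with a log-concave density in the interior variables, and the whole $(H,H^{RW})$-line ensemble measure $\mathbb{P}^i$ has a density (with respect to Lebesgue measure on $\prod_{j}\prod_{t_i\in\Lambda_d(a,b)}\mathbb{R}$) that is log-concave in each coordinate separately and exhibits the right sign of cross-derivatives. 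Concretely, the log-density of $\mathbb{P}^i$ is, up to a constant, $-\sum_j \sum_i H^{RW}(\mathcal{Q}_j(t_{i+1})-\mathcal{Q}_j(t_i)) - \sum_k\sum_i H(\rectangle(\mathcal{Q},k,t_i))$, where the sum over $k$ runs from $k_1-1$ to $k_2$ with the convention $\mathcal{Q}_{k_1-1}=f^i$, $\mathcal{Q}_{k_2+1}=g^i$, and the boundary/endpoint data enter as fixed parameters $\vec x^i,\vec y^i,f^i,g^i$ that are pointwise ordered between $i=1$ and $i=2$.

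The natural route is a Glauber-dynamics / Gibbs-sampler argument. First I would set up a single-site Markov chain on the product state space that, at each step, picks a site $(j,t_i)$ with $t_i\in\Lambda_d(a,b)$ and resamples $\mathcal{Q}_j(t_i)$ from its conditional law given everything else; this conditional law is supported on $\mathbb{R}$ with a log-concave density of the form $\propto \exp(-H^{RW}(\cdot - \mathcal{Q}_j(t_{i-1})) - H^{RW}(\mathcal{Q}_j(t_{i+1}) - \cdot) - H(\rectangle \text{ with } k=j-1) - H(\rectangle \text{ with } k=j))$ in the resampled variable, using parts (1) and (2) of Assumption A1 together with convexity of $H^{RW}$. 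One checks two things: (i) each such single-site chain, run for both parameter sets, preserves the pointwise order when started from ordered configurations — this is exactly a one-dimensional stochastic-domination statement for a family of log-concave measures whose ``location'' is monotone in the conditioning data, and here is where Assumption A1(1) (monotonicity of $H$ in its slots with the right signs) and A1(2) (the discrete-concavity / supermodularity inequality $-H(\vec a')+H(\vec a)\ge -H(\vec b')+H(\vec b)$) are used to guarantee that raising any neighboring coordinate or any boundary datum shifts the conditional CDF to the right; (ii) the chain is irreducible and aperiodic with unique stationary measure $\mathbb{P}^i$. Then I would couple the two chains by the same uniform random variables (inverse-CDF coupling at each site, using the same update site and the same uniform at each step), so that the order $\mathcal{Q}^1 \le \mathcal{Q}^2$ is preserved at every step; starting both from, say, the common configuration given by interpolating appropriately (or starting $\mathcal{Q}^1$ from a configuration dominated by $\mathcal{Q}^2$'s start), and letting time go to infinity, convergence to stationarity gives the desired monotone coupling of $\mathbb{P}^1$ and $\mathbb{P}^2$ in the limit; a standard tightness/compactness argument on the finite-dimensional state space upgrades this to an actual coupling of the two measures with the order holding almost surely.

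The main obstacle is step (i): verifying that the single-site conditional update is monotone in the conditioning data in the appropriate stochastic-ordering sense, i.e. that increasing any of the six inputs that feed into the two relevant copies of $H$, or increasing the random-walk neighbors $\mathcal{Q}_j(t_{i\pm1})$, pushes the resampled coordinate up in the usual stochastic order. For the $H^{RW}$-terms this is the classical fact that translating the ``target'' of a log-concave bridge increment monotonically translates the conditional law; for the $H$-terms one must translate the hypotheses of Assumption A1 — which are phrased as monotonicity (A1(1)) plus the cross-difference inequality (A1(2)) — into the statement that the log-density of the conditional law, as a function of the resampled variable, has a derivative that is monotone in each conditioning coordinate with the correct sign, which then yields the CDF ordering by a standard lemma on stochastic domination for one-parameter families of probability measures with monotone likelihood ratio. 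One has to be careful about the roles of the slots of $\rectangle$: the coordinate $\mathcal{Q}_j(t_i)$ being resampled appears as slot $5$ in $\rectangle(\mathcal{Q},j-1,t_i)$ (where $H$ is non-decreasing) and as slot $2$ in $\rectangle(\mathcal{Q},j,t_i)$ (where $H$ is non-increasing), so the signs must be tracked precisely; once this bookkeeping is done, A1(1)–(2) deliver exactly the monotone-likelihood-ratio property needed, and the rest of the argument is routine. I would also handle the degenerate cases $k_1-1\notin\Sigma$ or $k_2+1\notin\Sigma$ (boundary data $\pm\infty$) by a limiting argument, approximating $+\infty$ and $-\infty$ boundary curves by large finite ones and passing to the limit using the monotone coupling already established.
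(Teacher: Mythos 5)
Your plan is a genuinely different implementation of the same basic idea. The paper first \emph{discretizes the state space}: it approximates the jump distribution $e^{-H^{RW}}$ by a random variable $X^{\ell}$ supported on the finite lattice $\ell^{-1}\mathbb{Z}\cap[-\ell,\ell]$, proves the monotone coupling for the resulting finite-state $(H,X^{\ell})$-ensemble by a continuous-time Metropolis chain whose updates are single-site $\pm\delta$-moves accepted or rejected against a shared uniform random variable (Lemma~\ref{lem:monotone_discrete}), and then passes to the limit $\ell\to\infty$ to recover the continuous statement. You instead propose to work directly on the continuous state space, run a heat-bath (Gibbs) sampler that resamples a single coordinate from its full conditional, and couple via a shared uniform through the inverse CDF. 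Both approaches hinge on exactly the same two facts: convexity of $H^{RW}$ makes the random-walk terms shift up when a neighbor increases, and Assumption A1(2) is precisely the discrete cross-difference (supermodularity of $-H$) that makes the interaction terms behave the same way. The paper uses these to compare two Metropolis acceptance ratios $R_1\geq R_2$ at the single value where the two configurations agree; you use them to get MLR ordering of the full conditionals, which is a stronger, more "global" statement but delivers the same stochastic domination. Your route is arguably cleaner conceptually, and your treatment of the boundary slots of $\rectangle$ and the $\pm\infty$ boundary curves by approximation is fine.

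The genuine gap is in the step you call routine: convergence of the continuous-state Gibbs sampler to its stationary measure. Your chain lives on an unbounded subset of $\mathbb{R}^{(k_2-k_1+1)\times|\Lambda_d(a,b)|}$, and to let $t\to\infty$ from a fixed ordered starting configuration you need some form of $\phi$-irreducibility and Harris ergodicity (or at least convergence in total variation from one starting point), not merely tightness. In general this requires the full conditionals to have supports that overlap suitably, and in particular that $e^{-H^{RW}}$ be positive on all of $\mathbb{R}$. Assumption A2 only asserts that $H^{RW}$ is convex, which permits log-concave densities with compact support; for such $H^{RW}$ your Gibbs sampler need not be irreducible, and the "standard tightness/compactness" step does not follow because the marginal laws at time $t$ need not converge to $\mathbb{P}^1,\mathbb{P}^2$. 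This is precisely the obstruction that the paper's extra reduction removes: by discretizing the jump distribution first, the Markov chain lives on a \emph{finite} state space, where irreducibility and aperiodicity are easy to arrange and convergence to the unique invariant measure is automatic, and the analytic content shifts to the comparatively soft limit $\ell\to\infty$. To make your approach rigorous you would either need to add a hypothesis guaranteeing full support of $e^{-H^{RW}}$, or insert an intermediate discretization/truncation as the paper does.
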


The proof of this lemma is given in Section~\ref{appendix} following the Glauber dynamics approach of \cite{CH14,CH16} which realizes the line ensemble as the invariant measure of a Markov chain on trajectories. Assumption A1 on $\Hamd$ and convexity of $\Hrw$ are sufficient conditions under which the Markov chains can be coupled, hence proving coupling of their invariant measures as well.\\

Next two assumptions are about large $N$ behavior of a sequence of interaction and random walk Hamiltonians $\{\Hamd^N,\HrwN\}_{N\in\mathbb{N}}$. Note that the underlying discrete set is also varying in $N$. Let $\{d_N\}_{N\in\mathbb{N}}$ be sequence which decreases to zero and set $\Lambda_d^N=d_N \mathbb{Z}$.\\

The assumption A3 compares $\Hamd^N$ and $\Ham(x) = e^x$. The Hamiltonian $\Ham(x) = e^x$ is particularly important because the KPZ line ensembles satisfy $\mathbf{H}$-Brownian Gibbs property \cite{CH16}. 

We first define modulus of continuity for continuous functions. Let $K\in \mathbb{N}$, $a<b$ and $r>0$. For a continuous function $ {f} =(f_1, f_2, \cdots, f_K)\in C([1,K]_{\mathbb{Z}}\times [a,b],\mathbb{R})$, the $r$-modulus of continuity is defined as
\begin{equation}\label{def:module}
\omega_{(a,b)}( {f} ,r)= \sup_{1\leq i \leq K}\sup_{
 \substack{s,t\in [a,b] \\\vert s-t \vert \leq r}}\vert f_i(u)-f_i(t)\vert.
\end{equation}
{\raggedleft \bf Assumption A3.} There exists a constant $C_1>0$ such that the following holds. Fix arbitrary $k\in\mathbb{Z}$, $a<b$ with $a,b\in\Lambda^N_d$ and $\mathcal{L}=(\cL_k,\cL_{k+1})\in C([k,k+1]_{\mathbb{Z}}\times [a,b],\mathbb{R})$, it holds that
\begin{align*}
\left|\frac{\displaystyle\sum_{u\in\Lambda^N_d (a,b)}\Hamd^N(\rectangle(\mathcal{L},k,u))}{\displaystyle\int_a^b \exp(\cL_{k+1}-\cL_k(u))du}-1 \right|\leq  e^{C_1\left( \omega_{(a,b)}(\cL ,d_N) +d_N \right)}-1.\\
\end{align*}

The last Assumption A4 is a strong (KMT) coupling between Brownian bridges and random walk bridges. Recall that $B_L:[0,L]\to\mathbb{R}$ is the standard Brownian bridge defined in Definition \ref{def:BrownianBridge}. Given $L\in d_N\mathbb{N}$ and $z\in\mathbb{R}$, we denote by $\bar{S}^N_{L,z}:[0,L] $ the random walk bridge defined in Definition~\ref{def:RW_ensemble} using $\HrwN$ and $\Lambda_d^N=d_N\mathbb{Z}$. 

Under suitable requirements on $\HrwN$, Donsker invariance principle says that $\bar{S}^N_{L,z}(u)$ converges weakly to $B_L(u)+L^{-1}z  {u}$ as $N$ goes to infinity, while KMT coupling provides a quantitative estimate for this convergence rate. For the original classical result on the case of random walks with exponential moment, see \cite{KMT} and a recent treatment for the case of random walk bridges is considered in \cite{DW}.\\

{\raggedleft \bf Assumption A4.}\ \ For any $b_1,b_2>0$ there exist constants $0<a_1,a_2<\infty$ (depending on $b_1, b_2$ but not on $N$) such that the following statement holds. For any $N\in\mathbb{N}$, any $L>0$ with $d_N^{-1} L \in\mathbb{N}$, there exists a probability space on which a Brownian bridge $B_L(u)$ and a family of random walk bridges $\{\bar{S}^N_{L,z}(u)\}_{z\in\mathbb{R}}$ are defined. For all $z\in\mathbb{R}$, one has the following estimate
\begin{align*}
\mathbb{P}\left[ \sup_{0\leq u\leq L}\left\vert B_L(u)+L^{-1}z {u} -\bar{S}^N_{L,z}(u)\right\vert\geq a_1 d_N^{1/2}\log(d_N^{-1}L) \right]\leq a_2 (d_N^{-1}L )^{-b_1}e^{b_2z^2/L}.
\end{align*}
Moreover, $\{\bar{S}^N_{L,z}(u)\}_{z\in\mathbb{R}}$ is measurable in $z$.\\

Under assumptions A1-A4, we are ready to state the main result of this paper.
\begin{theorem}\label{theorem:main}
Let $\{\Hamd^N,\HrwN\}_{N\in\mathbb{N}}$ be a sequence of interaction and random walk Hamiltonians, $\{d_N\}_{N\in\mathbb{N}}$ be a sequence which decreases to zero and let $\Lambda_d^N=d_N \mathbb{Z}$. Fix $K\in \N\cup \{\infty\}$, let $\mathcal{L}^N$ be a $[1,K]_{\Z} \times \Lambda_d^N$-indexed discrete line ensemble that enjoys the discrete $(\Hamd^N,\HrwN)$-Gibbs property. Here we adopt the convention that $\cL^N_0 = +\infty$ and $\cL^N_{K+1} = -\infty$.

Assume that $(\Hamd^N,\HrwN,\Lambda^N_d)$ satisfies assumptions A1-A4. Moreover, assume that $\mathcal{L}^N_1(u)+u^2/2$, defined through linear interpolation, converges weakly to stationary process as a $\{1\}\times\mathbb{R}$-indexed line ensembles. Then the following statements hold.
\begin{enumerate}
\item For any $T>0$ and $1 \leq k \leq K$, the restriction of the line ensemble $\mathcal{L}^N$ to $[1,k]_{\Z}\times [-T,T]$ is sequentially compact as $N$ varies. 

\item Furthermore, any subsequential limit line ensemble $\cL^{\infty}$ satisfies $\Ham$-Brownian Gibbs property with $\Ham(x)=e^x$.
\end{enumerate}
\end{theorem}

\section{Proof of Theorem \ref{theorem:main}}
We first present in Subsection~\ref{three propositions} two propositions concerning upper bounds and lower bounds of $\mathcal{L}^N$ in Theorem~\ref{theorem:main}. Next, we record a few estimates on random walk bridges and $(\Hamd^N, \HrwN)$-discrete Gibbs line ensembles in Subsection~\ref{KMT_assumption}. Then we prove Theorem \ref{theorem:main} in the last two subsections.\\


\subsection{Two key propositions}\label{three propositions}
\begin{proposition}\label{pro:1}
Continue the notations ans assumptions in Theorem~\ref{theorem:main}. Fix arbitrary $k\in [1,K]_{\mathbb{Z}}, \varepsilon\in (0,1)$ and $x_0>0$. There exists $R_k= R_k(\varepsilon)>0$ and $N_1=N_1(k,x_0,\varepsilon)\in\mathbb{N}$ such that the following holds. For any $\overline{x}\in [-x_0,x_0]$ and $N\geq N_1$, we have 
\begin{equation*}
			\mathbb{P}\Big(\inf_{u\in \Lambda_d^N[\overline{x}-1,\overline{x}+1]}\big(\mathcal{L}^N_{k}(u)+ {u^2}/{2}\big)<-R_k\Big)<\epsilon.
\end{equation*}
\end{proposition}
 
\begin{proposition}\label{pro:2}
Continue the notations ans assumptions in Theorem~\ref{theorem:main}. Fix arbitrary $k\in [1,K]_{\mathbb{Z}}, \varepsilon\in (0,1)$ and $x_0>0$. There exists $\hat{R}_k= \hat{R}_k(\varepsilon)>0$ and $N_2=N_2(k,x_0,\varepsilon)\in\mathbb{N}$ such that the following holds. For any $\overline{x}\in [-x_0,x_0]$ and $N\geq N_2$, we have 
\begin{equation*}
			\mathbb{P}\Big(\sup_{u\in \Lambda_d^N[\overline{x}-1,\overline{x}+1]}\big(\mathcal{L}^N_{k}(u)+ {u^2}/{2}\big)>\hat{R}_k\Big)<\epsilon.
\end{equation*}
\end{proposition}

The proofs of the above propositions are postponed to Section \ref{propproofs}.

\subsection{Estimates for random walk bridges and $(\Hamd^N, \HrwN)$ line ensembles}\label{KMT_assumption}
In this subsection we prove a few lemmas which we need in the proof of main Theorem~\ref{theorem:main}. Recall that $B_L$ is the Brownian bridge defined in Definition~\ref{def:BrownianBridge}. The following lemma can be found in \cite[Chapter 4, (3.40)]{KS}.
	
\begin{lemma}\label{Bbridge_sup}
For all $s>0$, it holds that
\begin{align*} 
\mathbb{P}\left(\sup_{u\in[0,L]} B_L(u) >s \right) = e^{-2s^2/L}.
\end{align*}
\end{lemma}

The next lemma is an analogue for random walk bridges which satisfy Assumption A4. 

\begin{lemma}\label{lem:sup_est}
Fix $L_2\geq L_1> 0,\ z_0\geq 0$ and $s_0\geq 1$. Suppose that $\HrwN$ satisfies Assumption A4 with respect to $\Lambda^N_d=d_N\mathbb{Z}$. Then there exists $N_0=N_0(L_1,L_2,s_0,z_0)$ such that the following holds. For any $|z|\leq z_0$, $1\leq s\leq s_0$, $L_1\leq L\leq L_2 $ and $N\geq N_0$, let $\bar{S}^N_{L,z}(u)$ be the random walk bridge defined in Definition~\ref{def:RW_ensemble} with respect to $\HrwN$ and $\Lambda^N_d$. It holds that
\begin{equation*}
\mathbb{P}\left(\sup_{u\in[0,L]} \left( \bar{S}^N_{L,z}(u)-L^{-1} z {u}  \right) >s\right)\leq e^{-s^2/L}.
\end{equation*}
\end{lemma}

\begin{proof}
 Let $b_1=b_2=1$ and $a_1,\ a_2$ be the constants determined in Assumption A4. We take $N_0$ to be the smallest number such that the following two inequalities hold for all $N\geq N_0$:
 \begin{equation}\label{equ:1225}
\begin{split}
a_1 d_N^{1/2}  \log (d_N^{-1}L_2)&\leq  1-2^{-1}  \sqrt{3} ,\\
 a_2 (d_N^{-1}L_1)^{-1}e^{z_0^2/L_1}&\leq  \min_{1\leq s\leq s_0}\left( e^{-s^2/L}-e^{-3s^2/(2L)} \right).
\end{split} 
 \end{equation}
Combining Assumption A4, Lemma \ref{Bbridge_sup}, \eqref{equ:1225} and $s\geq 1$, we get
\begin{align*}
&\mathbb{P}\left(\sup_{u\in[0,L]} \left( \bar{S}^N_{L,z}(u)-L^{-1}z {u}  \right) >s\right)\\
\leq &\mathbb{P}\left(\sup_{u\in[0,L]} B_L(u) > 2^{-1}\sqrt{3}s\right)+\mathbb{P}\left(\sup_{u\in[0,L]} \left| B_L(u) + Lz  u   - \bar{S}^N_{L,z}(u) \right|  >\left(1-2^{-1} \sqrt{3} \right) s\right)\\
\leq &e^{-s^2/L}.
\end{align*}
\end{proof}

Next, we compare the normalizing constants defined in \eqref{def:normalcont_Brownian} and \eqref{def:normal_randomwalk}. This is the content of Proposition~\ref{pro:Z_comparison}. To prove Proposition~\ref{pro:Z_comparison}, we need the following two lemmas. 
\begin{lemma}\label{change_Hamiltonian}
Let $\mathbf{H}(x)=e^x$, $\{\Hamd^N\}_{N\in  \mathbb{N}}$ be a sequence of Hamiltonians, $\{d_N\}_{N\in\mathbb{N}}$ be a sequence which decreases to $0$ and set $\Lambda^N_d=d_N\mathbb{Z}$. We assume that $(\Hamd^N,\Lambda_d^N)$ satisfies Assumption A3. There exists a constant $C_2$ depending on the constant $C_1$ in Assumption A3 such that the following holds.

 Fix arbitrary  $k_1\leq k_2$ with $k_1,k_2\in\mathbb{Z}$, $a<b$ with $a,b\in\Lambda^N_d$ and $  \tilde{\mathcal{L}}=(\mathcal{L}_{k_1-1},\mathcal{L}_{k_1  },\dots, \mathcal{L}_{k_2+1})$ in $C([k_1-1,k_2+1]_{\mathbb{Z}}\times \Lambda_d[a,b],\mathbb{R})$. We set $\vec{x}=(\mathcal{L}_{k_1}(a),\dots,\mathcal{L}_{k_2}(a))$, $\vec{y}=(\mathcal{L}_{k_1}(b),\dots,\mathcal{L}_{k_2}(b))$, $f=\mathcal{L}_{k_1-1}$, $g=\mathcal{L}_{k_2+1}$ and $\mathcal{L}=	(\mathcal{L}_{k_1},\dots,\mathcal{L}_{k_2})$. Then it holds that
\begin{equation*}
\left|W_{\Hamd^N}^{k_1,k_2,\Lambda^N_d(a,b),\vec{x},\vec{y},f,g}(\mathcal{L})-W_{\Ham}^{k_1,k_2,(a,b),\vec{x},\vec{y},f,g}(\mathcal{L})\right|\leq C_2\left(\omega_{(a,b)}(\tilde{\mathcal{L}},d_N)+d_N \right),
\end{equation*}
See \eqref{def:module} for the definition of $\omega_{(a,b)}$. 
\end{lemma}  
\begin{proof}
To simplify the notation, we denote 
\begin{align*}
W={W}_{\Ham}^{k_1,k_2,(a,b),\vec{x},\vec{y},f,g}(\mathcal{L})\ \textup{and}\ W^N={W}_{\Hamd^N}^{k_1,k_2,\Lambda^N_d(a,b),\vec{x},\vec{y},f,g}(\mathcal{L}). 
\end{align*}
From \eqref{def:Boltzmann_Brownian} and \eqref{def:Boltzmann_randomwalk}, we have
\begin{align*}
-\log{W}&=\displaystyle\sum_{ i=k_1-1}^{k_2} \displaystyle\int_a^b \exp\left( \cL_{i+1}(u)-\cL_{i}(u) \right) du,\\
-\log{W}^N&=\displaystyle\sum_{i=k_1-1}^{k_2}\displaystyle\sum_{u\in \Lambda^N_d(a,b) }\Hamd^N(\rectangle(\mathcal{L},i,u)).
\end{align*} 
Under Assumption A3, we obtain
\begin{align*}
\left| {\log{W}^N }/{\log{W}}-1 \right| &\leq e^{C_1\left( \omega_{(a,b)}(\tilde{\mathcal{L}},d_N)+d_N \right)}-1.
\end{align*}
If we further assume $\omega_{(a,b)}(\tilde{\mathcal{L}},d_N)+d_N\leq 1$, then by the mean value theorem it holds that
\begin{align*}
e^{C_1\left( \omega_{(a,b)}(\tilde{\mathcal{L}},d_N)+d_N \right)}-1\leq C_1e^{C_1}\left( \omega_{(a,b)}(\tilde{\mathcal{L}},d_N)+d_N \right).
\end{align*}
Hence
\begin{align}\label{equ:1225847}
\left| {\log{W}^N }/{\log{W}}-1 \right| \leq C_1e^{C_1}\left( \omega_{(a,b)}(\tilde{\mathcal{L}},d_N)+d_N \right).
\end{align}
Further assuming  $\omega_{(a,b)}(\tilde{\mathcal{L}},d_N)+d_N \leq   (2C_1)^{-1}e^{-C_1}$, the right hand side of \eqref{equ:1225847} is bounded from above by $2^{-1}$. Applying the elementary inequality (which we proof at the end)
\begin{align}\label{inq:exp_function}
|\alpha^{1+\beta }-\alpha |\leq |\beta| \ \ \textup{for all}\ \alpha \in (0,1]\ \textup{and}\ |\beta|\leq 2^{-1} 
\end{align}
with $\alpha = W$ and $\ 1+\beta={\log{W}^N }/{\log{W}}$, we conclude from \eqref{equ:1225847} that 
\begin{align}\label{equ:1225915}
 \left|\log W^N-\log W \right| \leq  C_1e^{C_1}\left( \omega_{(a,b)}(\tilde{\mathcal{L}},d_N)+d_N \right),
\end{align}
provided $\omega_{(a,b)}(\tilde{\mathcal{L}},d_N)+d_N \leq \min\{1, (2C_1)^{-1}e^{-C_1}\}$. 

Suppose $\left( \omega_{(a,b)}(\tilde{\mathcal{L}},d_N)+d_N \right)> \min\{1, (2C_1)^{-1}e^{-C_1}\}$. From \eqref{def:Boltzmann_Brownian} and \eqref{def:Boltzmann_randomwalk}, the Boltzmann weights take values in $[0,1]$. Therefore, 
\begin{equation}\label{equ:1225914}
\left| W_N-W \right|\leq    1 \leq   \max\{ 1,2C_1 e^{C_1} \} \left( \omega_{(a,b)}(\tilde{\mathcal{L}},d_N)+d_N \right).
\end{equation}
Combining \eqref{equ:1225915} and \eqref{equ:1225914}, the assertion holds for by taking $C_2=\max\{1,2C_1 e^{C_1}\}$.\\

It remains to prove \eqref{inq:exp_function}. For any $\alpha > 0$ and $|\beta |\leq 1/2$, from the Mean value theorem, it holds that
\begin{align*}
|\alpha ^{1+\beta }-\alpha |=|\alpha ^{1+\beta'}\log a| |\beta|
\end{align*}
for some $|\beta'|\leq 1/2$. Then \eqref{inq:exp_function} follows the fact that
\begin{align*}
\sup_{\alpha \in (0,1]} |\alpha ^{1+\beta'}\log \alpha |\leq \sup_{a\in (0,1]} |\alpha^{1/2}\log \alpha| \leq 1.
\end{align*}
\end{proof}


\begin{lemma}\label{change_ensemble}
Let $\mathbf{H}(x)=e^x$. There exists a constant $C_3$ such that the following holds.

 Fix arbitrary  $k_1\leq k_2$ with $k_1,k_2\in\mathbb{Z}$, $a<b$ and $  \tilde{\mathcal{L}}^i=(\mathcal{L}^i_{k_1-1},\mathcal{L}^i_{k_1  },\dots, \mathcal{L}^i_{k_2+1})$ in $C([k_1-1,k_2+1]_{\mathbb{Z}}\times [a,b],\mathbb{R})$ with $i=1,2$. We set $\vec{x}^i=(\mathcal{L}^i_{k_1}(a),\dots,\mathcal{L}^i_{k_2}(a))$, $\vec{y}^i=(\mathcal{L}^i_{k_1}(b),\dots,\mathcal{L}^i_{k_2}(b))$, $f^i=\mathcal{L}^i_{k_1-1}$, $g^i=\mathcal{L}^i_{k_2+1}$ and $\mathcal{L}^i=	(\mathcal{L}^i_{k_1},\dots,\mathcal{L}^i_{k_2})$. Let
 \begin{align*}
 \|\tilde{\mathcal{L}}^1-\tilde{\mathcal{L}}^2\|=\sup_{\substack{a\leq u\leq b\\k_1-1\leq k\leq k_2+1}}\left| \mathcal{L}_k^1(u)-\mathcal{L}^2_k(u) \right|.
 \end{align*}
 Then it holds that
\begin{equation*}
\left| W_{\Ham}^{k_1,k_2,(a,b),\vec{x}^1,\vec{y}^1,f^1,g^1}(\mathcal{L}^1)-W_{\Ham}^{k_1,k_2,(a,b),\vec{x}^2,\vec{y}^2,f^2,g^2}(\mathcal{L}^2) \right|\leq C_3\|\tilde{\mathcal{L}}^1-\tilde{\mathcal{L}}^2\|.
\end{equation*}

\begin{proof}
To simplify the notation, we denote 
\begin{align*}
W^i=W_{\Ham}^{k_1,k_2,(a,b),\vec{x}^i,\vec{y}^i,f^i,g^i}(\mathcal{L}^i). 
\end{align*}
For any $ k\in [k_1-1,k_2]_{\mathbb{Z}}$, we have 
\begin{align*}
\left|\frac{\displaystyle \int_a^b \exp(\cL^1_{k+1}(u)-\cL^1_{k}(u))du}{\displaystyle\int_a^b \exp(\cL^2_{k+1}(u)-\cL^2_{k}(u))du}-1\right| &\leq \left|\sup_{a\leq u\leq b }\frac{ \exp(\cL^1_{k+1}(u)-\cL^1_{k}(u))}{\exp(\cL^2_{k+1}(u)-\cL^2_{k}(u))}-1\right| \leq  e^{2 \| \tilde{\mathcal{L}}^1-\tilde{\mathcal{L}}^2 \|}-1.
\end{align*}
Together with
\begin{align*}
-\log W^i =\displaystyle\sum_{ k=k_1-1}^{k_2} \displaystyle\int_a^b \exp\left( \cL^i_{k+1}(u)-\cL^i_{k}(u) \right) du,
\end{align*}
we have 
\begin{align*}
\left| \log W^1/\log W^2-1 \right|\leq e^{2 \| \tilde{\mathcal{L}}^1-\tilde{\mathcal{L}}^2 \|}-1.
\end{align*}
If we assume $\| \tilde{\mathcal{L}}^1-\tilde{\mathcal{L}}^2 \|\leq 1,$ by the mean value theorem, it holds that
\begin{align*}
\left| \log W^1/\log W^2-1 \right|\leq 2e^2 \| \tilde{\mathcal{L}}^1-\tilde{\mathcal{L}}^2 \|.
\end{align*}
Further assuming that $\|\tilde{\mathcal{L}}^1-\tilde{\mathcal{L}}^2\|\leq 4^{-1}e^{-2}$, we can apply \eqref{inq:exp_function} with $\alpha =W^2$ and $1+\beta=\log W^1/\log W^2,$ to get
\begin{align*}
|W^1-W^2|\leq 2e^2 \| \tilde{\mathcal{L}}^1-\tilde{\mathcal{L}}^2 \|.
\end{align*}

Suppose $\|\tilde{\mathcal{L}}^1-\tilde{\mathcal{L}}^2\|> 4^{-1} e^{-2}$. We can simply use
\begin{align*}
\left|W^1-W^2 \right|
\leq 1 \leq 4e^2\|\tilde{\mathcal{L}}^1-\tilde{\mathcal{L}}^2\|.
\end{align*}
Thus the desired result follows by taking $C_3=4e^2$.
\end{proof}
\end{lemma}

The following proposition shows under suitable assumptions, the normalizing constants $Z_{\Hamd^N,\HrwN}^{k_1,k_2,\Lambda^N_d(a,b),\vec{x},\vec{y},f,g}$ converge to $Z_{\Ham}^{k_1,k_2,(a,b),\vec{x},\vec{y},f,g}$ with $\Ham(x)=e^x$. See Definitions~\ref{def:H_Brownian} and \ref{def:RWB_Gibbs_ensemble} for the definitions of these normalizing constants.
\begin{proposition}\label{pro:Z_comparison}
Let $\mathbf{H}(x)=e^x$, $\{\Hamd^N,\HrwN\}_{N\in\mathbb{N}}$ be a sequence of interaction and random walk Hamiltonians, $\{d_N\}_{N\in\mathbb{N}}$ be a sequence which decreases to zero and let $\Lambda_d^N=d_N \mathbb{Z}$. Suppose $(\Hamd^N,\HrwN,\Lambda^N_d)$ satisfy Assumption A3 and Assumption A4. Then for any  $k_0\in\mathbb{N}$, $0<L_1<L_2$, $z_0>0$ and $\epsilon>0$, there exists $N_0$ and $\delta>0$ such that the following statement holds.

Fix arbitrary $N\in\mathbb{N}$ with $N\geq N_0$, $k_1, k_2\in\mathbb{Z}$ with $0\leq k_2-k_1\leq k_0$, $a,b \in\Lambda_d^N $ with $L_1\leq b-a\leq L_2$, $\vec{x}=(x_{k_1},\dots x_{k_2}), \vec{y}=(y_{k_1},\dots y_{k_2}), $ with $ \sup_{k_1\leq k\leq k_2}|x_k-y_k|\leq z_0$ and two continuous functions $f,g\in C([a,b],\mathbb{R})$ with $\omega_{(a,b)}(f,d_N)+\omega_{(a,b)}(g,d_N)<\delta $. 

Let $B:[k_1,k_2]_{\mathbb{Z}}\times [a,b]\to \mathbb{R}$ and $\bar{S}^N:[k_1,k_2]_{\mathbb{Z}}\times \Lambda^N_d[a,b]\to \mathbb{R}$ be the line ensembles with laws $\mathbb{P}^{k_1,k_2,(a,b),\vec{x},\vec{y}}_{\free}$ and $\PP_{\free,\HrwN}^{k_1,k_2,\Lambda^N_d(a,b),\vec{x},\vec{y}}$ respectively. (See Definitions~\ref{def:H_Brownian} and \ref{def:RW_ensemble}.)  Then $\bar{S}^N$ and $B$ can be coupled in one probability space. Moreover, given two events $\mathsf{J}$ and $\mathsf{J}'$ with $\mathbb{P}(\mathsf{J}\Delta \mathsf{J}')<\epsilon'$, we have

\begin{equation*}
\left| \mathbb{E}\left[ W_{\Hamd^N}^{k_1,k_2,\Lambda^N_d(a,b),\vec{x},\vec{y},f,g}(\bar{S}^N) \mathbb{1}_{\mathsf{J}} \right]-\mathbb{E}\left[ W_{\Ham}^{k_1,k_2,(a,b),\vec{x},\vec{y},f,g}(B) \mathbb{1}_{\mathsf{J}'} \right] \right|<\epsilon+\epsilon'.
\end{equation*}
In particular, by taking $\mathsf{J}=\mathsf{J}'$ to be the whole probability space, we have

\begin{equation*}
\left| Z_{\Hamd^N,\HrwN}^{k_1,k_2,\Lambda^N_d(a,b),\vec{x},\vec{y},f,g}-Z_{\Ham}^{k_1,k_2,(a,b),\vec{x},\vec{y},f,g}  \right|<\epsilon.
\end{equation*}
\end{proposition}

\begin{proof}
Let $\delta_1, \delta_2$ be two small numbers to be determined. By taking $b_1=1$ and $b_2=1$ in Assumption A4, we can couple $\bar{S}^N $ and $B$ in the same probability space such that for each $k\in [k_1,k_2]_{\mathbb{Z}}$,
\begin{equation}\label{equ:1226220}
 \mathbb{P}\left( \sup_{a\leq u\leq b}\left| B_k (u)-\bar{S}^N_k(u) \right|>a_1 d_N^{1/2}\log ( d_N^{-1} L_2 ) \right)\leq  a_2(d_N^{-1} L_1)^{-1}e^{z_0^2/L_1}.
\end{equation}

Define the events
\begin{align*}
\mathsf{A}_{1,N}=&\{ \omega_{(a,b)}(B, d_N) <\delta_1\},\\
\mathsf{A}_{2,N}=&\left\{ \sup_{a\leq u\leq b, k_1\leq k\leq k_2} \left| B_k(u)-\bar{S}^N_k(u) \right| <\delta_2\right\},\\
\mathsf{A}_N=&A_{1,N}\cap A_{2,N}.
\end{align*}
Take $N_0$ large enough such $a_1d_{N_0}^{1/2}\log (d_{N_0}^{-1} L_2)<\delta_2$. Then \eqref{equ:1226220} implies for all $N\geq N_0$, it holds that $\mathbb{P}(\mathsf{A}^{\neg}_{2,N})\leq k_0    a_2(d_{N_0}^{-1} L_1)^{-1}e^{z_0^2/L_1}.$ Hence through taking $N_0$ large enough, we have $\mathbb{P}(\mathsf{A}^{\neg}_{2,N})\leq \varepsilon.$ Also, for $N_0$ large enough depending on $L_2, z_0, k_0$ and $\delta_1$, we have $\mathbb{P}(\mathsf{A}^{\neg}_{1,N})\leq \varepsilon$ for all $N\geq N_0$. In short, we obtain that 
\begin{equation}\label{equ:1226225}
\mathbb{P}(\mathsf{A}^{\neg}_{N})\leq 2\varepsilon. 
\end{equation}
When the event $\mathsf{A}_N$ occurs, it holds that $\omega_{(a,b)}(\bar{S}^N,d_N)\leq \delta_1+2\delta_2.$ Hence, by applying Lemma \ref{change_Hamiltonian} and \ref{change_ensemble}, we get
\begin{align*}
&\mathbbm{1}_{\mathsf{A}_N} \left|W_{\Hamd^N}^{k_1,k_2,\Lambda^N_d(a,b),\vec{x},\vec{y},f,g}(\bar{S}^N)-W_{\Ham}^{k_1,k_2,(a,b),\vec{x},\vec{y},f,g}(B)\right|\\
\leq & \mathbbm{1}_{\mathsf{A}_N} \left|W_{\Hamd^N}^{k_1,k_2,\Lambda^N_d(a,b),\vec{x},\vec{y},f,g}(\bar{S}^N)-W_{\Ham}^{k_1,k_2,(a,b),\vec{x},\vec{y},f,g}(\bar{S}^N)\right|\\
&+ \mathbbm{1}_{\mathsf{A}_N}\left|W_{\Ham}^{k_1,k_2,(a,b),\vec{x},\vec{y},f,g}(\bar{S}^N)-W_{\Ham}^{k_1,k_2,(a,b),\vec{x},\vec{y},f,g}(B)\right|\\
\leq & \mathbbm{1}_{\mathsf{A}_N}\big(C_2(\delta+\delta_1+2\delta_2+d_N)+C_3\delta_2\big).
\end{align*}
By choosing $\delta$, $\delta_1$, $\delta_2$ and $d_{N_0}$ small enough, we have
\begin{align}\label{equ:1226231}
 \mathbbm{1}_{\mathsf{A}_N} \left|W_{\Hamd^N}^{k_1,k_2,\Lambda^N_d(a,b),\vec{x},\vec{y},f,g}(\bar{S}^N)-W_{\Ham}^{k_1,k_2,(a,b),\vec{x},\vec{y},f,g}(B)\right|<\mathbbm{1}_{\mathsf{A_N}} \epsilon.
\end{align}
Combining \eqref{equ:1226225} and \eqref{equ:1226231} and the fact that Boltzmann weights take values in $[0,1]$, we conclude that 
\begin{align*}
&\left| \mathbb{E}\left[ W_{\Hamd^N}^{k_1,k_2,\Lambda^N_d(a,b),\vec{x},\vec{y},f,g}(\bar{S}^N) \mathbb{1}_{\mathsf{J}} \right]-\mathbb{E}\left[ W_{\Ham}^{k_1,k_2,(a,b),\vec{x},\vec{y},f,g}(B) \mathbb{1}_{\mathsf{J}'} \right] \right|\\
\leq & \left| \mathbb{E}\left[ W_{\Hamd^N}^{k_1,k_2,\Lambda^N_d(a,b),\vec{x},\vec{y},f,g}(\bar{S}^N) \mathbb{1}_{\mathsf{J}\cap \mathsf{J}'\cap \mathsf{A}_N} \right]-\mathbb{E}\left[ W_{\Ham}^{k_1,k_2,(a,b),\vec{x},\vec{y},f,g}(B)  \mathbb{1}_{\mathsf{J}\cap \mathsf{J}'\cap \mathsf{A}_N} \right] \right|\\
+&\left| \mathbb{E}\left[ W_{\Hamd^N}^{k_1,k_2,\Lambda^N_d(a,b),\vec{x},\vec{y},f,g}(\bar{S}^N) \mathbb{1}_{\mathsf{J}\cap \mathsf{J}'\cap \mathsf{A}_N^{\neg}} \right]\right|+\left| \mathbb{E}\left[ W_{\Ham}^{k_1,k_2, (a,b),\vec{x},\vec{y},f,g}(B) \mathbb{1}_{\mathsf{J}\cap \mathsf{J}'\cap \mathsf{A}_N^{\neg}} \right]\right|\\
+&\left| \mathbb{E}\left[ W_{\Hamd^N}^{k_1,k_2,\Lambda^N_d(a,b),\vec{x},\vec{y},f,g}(\bar{S}^N) \mathbb{1}_{\mathsf{J} \setminus \mathsf{J}'} \right]\right|+\left| \mathbb{E}\left[ W_{\Ham}^{k_1,k_2,(a,b),\vec{x},\vec{y},f,g}(B) \mathbb{1}_{\mathsf{J}' \setminus \mathsf{J}} \right]\right|\\
\leq &5\epsilon+\epsilon'.
\end{align*}
\end{proof}

\subsection{Proof of Theorem \ref{theorem:main} (1)}\label{proof of main_thm (1)}
In this subsection, we prove Theorem~\ref{theorem:main} (1). The core of the proof is an estimate on the normalizing constants for $(\Hamd^N,\HrwN)$-Gibbs line ensembles, which we show in the lemma below. The analogous result for $\Ham$-Brownian Gibbs line ensembles in proved in \cite[Proposition 6.4]{CH16}.

 \begin{lemma}\label{lemma:Z}
 Continue the notations ans assumptions of Theorem~\ref{theorem:main}. Fix $k_1\leq k_2$ with $k_1,k_2\in [1,K]_{\mathbb{Z}}$, $0<L_1< L_2$ and $\varepsilon>0$. Then there exists $\delta=\delta(k_2,L_1,L_2,\varepsilon) $, and $N_0=N_0(k_2,L_1,L_2,\epsilon)$ such that the following holds.
 
Fix arbitrary $N\geq  N_0$, and $a,b\in\Lambda^N_d[-L_2,L_2]$ with $L_1\leq b-a$. We have
 	\begin{equation*}
 		\mathbb{P}\Big(Z^{k_1,k_2,\Lambda_d^N(a,b),\vec{x},\vec{y},f,g}_{\Hamd^N,\HrwN}<\delta \Big)<\epsilon,
 	\end{equation*}
 	where $\vec{x}=(\mathcal{L}^N_i(a))_{i=k_1}^{k_2}$, $\vec{y}=(\mathcal{L}^N_i(b))_{i=k_1}^{k_2}$, $f=\mathcal{L}^N_{k_1-1}$, $g=\mathcal{L}^N_{k_2+1}$.
 \end{lemma}
 
\begin{proof}
Propositions \ref{pro:1} and \ref{pro:2} imply that there exists $M>0$, depending on  $k_2,L_2$ and $\epsilon$, such that the event
	\begin{equation*}
\begin{split}
\mathsf{E}=&\Big\{ \min_{u\in \Lambda_d^N[a,b]}\mathcal{L}_{k_1-1}^N(u) >-M \Big\} \cap \Big\{ \max_{u\in \Lambda_d^N[a,b]}\mathcal{L}_{k_2+1}^N(u) <M \Big\}\\
	 &\cap \Big\{\vert \mathcal{L}^N_i(u) \vert \leq M, \ \textup{for all}\  u\in\{a,b\},\ i\in [k_1,k_2]_{\mathbb{Z}} \Big\} 
 \end{split}
	\end{equation*}
has probability $\mathbb{P}(\mathsf{E})\geq 1- \epsilon $.
	
Let $\delta>0$ be a small number to be specified soon. Define the event (with $\vec{x}, \vec{y}, f, g$ as in the statement of the lemma)
	\begin{equation*}
 \mathsf{D}=\Big\{ Z^{k_1,k_2,\Lambda_d^N(a,b),\vec{x},\vec{y},f,g}_{\Hamd^N,\HrwN}<\delta \Big\}.
	\end{equation*}
Since $\mathbb{P}(\mathsf{E}^{\neg})< \epsilon$ and $\mathbb{P}(\mathsf{D})\leq \mathbb{P}(\mathsf{D}\cap \mathsf{E}) +\mathbb{P}(\mathsf{E}^{\neg})$, it suffice to show that $ \mathbb{P}(\mathsf{D} \cap \mathsf{E} )=0$.\\

Due to the monotonicity of Assumption A1 (1), we have
 \begin{equation*}
 \mathbbm{1}_{\mathsf{E}} Z^{k_1,k_2,\Lambda_d^N(a,b),\vec{x},\vec{y},f,g}_{\Hamd^N,\HrwN}\geq \mathbbm{1}_{\mathsf{E}} Z^{k_1,k_2,\Lambda_d^N(a,b),\vec{x},\vec{y},-M,M}_{\Hamd^N,\HrwN}.
 \end{equation*}
 Let $\delta=2^{-1}\inf Z^{k_1,k_2,(a,b),\vec{x}',\vec{y}',-M,M}_{\Ham},$ where the infimum is taking over $\vec{x}',\vec{y}'\in [-M,M]^{k_2-k_1+1}$. By Proposition~\ref{pro:Z_comparison}, there exists $N_0$ depending on $k_2, L_1,L_2, M$ and $\delta$ such that for $N\geq N_0$,
 \begin{align*}
\mathbbm{1}_{\mathsf{E}} \left|  Z^{k_1,k_2,\Lambda_d^N(a,b),\vec{x},\vec{y},-M,M}_{\Hamd^N,\HrwN}-Z^{k_1,k_2,(a,b),\vec{x},\vec{y},-M,M}_{\Ham}\right|\leq \delta\mathbbm{1}_{\mathsf{E}}.  
\end{align*}
Under the above arrangement, $ \mathsf{D} \cap \mathsf{E}=\varnothing$. 
\end{proof}

Before giving the proof of Theorem \ref{theorem:main}(1),  let us recall a tightness criterion for random continuous functions. For a continuous function $f =(f_1,f_2,\dots,f_k)$ in $C([1,k]_{\mathbb{Z}}\times [a,b],\mathbb{R})$ and $r>0$, $\omega_{(a,b)}(f,r)$ defined in \eqref{def:module} measures the modulus of continuity of $f$. Consider a sequence of probability measures $\mathbb{P}_N$ on $C([1,k]_{\mathbb{Z}}\times [a,b],\mathbb{R})$ and define event
\begin{equation}\label{def:eventU}
\mathsf{U}_{[a,b]} (\rho,r )=\Big\{\omega_{(a,b)}\big( {f} ,r\big)\leq \rho\Big\}.
\end{equation}
As an immediate generalization of \cite[Theorem 8.2]{Bi}, a sequence $\mathbb{P}_N$ of probability measures on $C([1,k]_{\mathbb{Z}}\times [a,b],\mathbb{R})$ is tight if, for each $i\in [1,k]_{\mathbb{Z}}$, the one-point distribution of $f_i(u_0)$ at a fixed $u_0\in[a,b]$ is tight and if, for each positive $\rho$ and $\eta$, there exists $r>0$ and integer $N_0$ such that for $N\geq N_0$,
\begin{equation*}
\mathbb{P}_N \big(\mathsf{U}_{[a,b]} (\rho,r ) \big)\geq 1-\eta.
\end{equation*}
This tightness criterion will be applied to $\mathcal{L}^N$ restricted on a compact interval.\\

\begin{proof}[Proof of Theorem~\ref{theorem:main} (1)]
We present only the argument of the case $1\leq k<K$. The case $k=K$ can be proved analogously. Fix $T>0$. Recall that $\{\cL^N\}_{N\in\mathbb{R}}$ is a sequence of $[1,K]_{\mathbb{Z}}\times \Lambda^N_d$-indexed line ensemble which has $(\Hamd^N,\HrwN)$-Gibbs property. We denote $\mathbb{P}_N$ $\mathbb{E}_N$ as the corresponding probability measures on $C([1,k]_{\mathbb{Z}}\times [-T,T],\mathbb{R})$. We denote $\mathbb{E}_N$ as the expectations of $\mathbb{P}_N$. We aim to show that $\mathbb{P}_N$ is tight in $N$.\\ 

From Propositions \ref{pro:1} and \ref{pro:2}, the sequence of random variables $\{\mathcal{L}^N_i(0) \}_{N\in\mathbb{N}}$ is tight in $N$ for all  $i\in [1,k]_{\mathbb{N}} $. This gives the one point tightness. It remains to verify that, for any $\rho,\eta>0$, there exist $r$ and $N_0$ depending on $k,T,\rho$ and $r$ such that for $N\geq N_0 $, it holds that
	\begin{equation}\label{eq:P_N}
	\mathbb{P}_N\big(\mathsf{U}_{[-T,T]} (\rho,r )\big)\geq 1-\eta,
	\end{equation}
where $\mathsf{U}_{[-T,T]} (\rho,r )$ is defined in \eqref{def:eventU}.\\
	
For $M>0$, we define the event
\begin{equation*}
	\mathsf{S}_{N,M}=\bigcap_{i=1}^k \big\{ \mathcal{L}^N_i(\pm T)\in [-M,M]\ \textup{for all}\ i\in [1,k]_{\mathbb{Z}}  \big\}.
 \end{equation*}
Propositions \ref{pro:1} and \ref{pro:2} imply there exist $M$ and $N_0$ depending on $k,T$ and $\eta$ such that 
\begin{equation}\label{equ:SNM}
 \mathbb{P}(\mathsf{S}_{N,M}^{\neg})\leq 4^{-1}\eta.
\end{equation} 
Let $\vec{x}^N=(\mathcal{L}^N_1(T),\dots,\mathcal{L}^N_k(-T))$, $\vec{y}^N=(\mathcal{L}^N_1(T),\dots,\mathcal{L}^N_k(T))$ and $g^N(u)=\mathcal{L}^N_{k_2+1}(u)$. We denote $Z_N$ as a shorthand for
\begin{equation}\label{equ:def_someZN} 
Z_N :=Z_{\Hamd^N,\HrwN}^{1,K,\Lambda_d^N(-T,T),\vec{x}^N,\vec{y}^N,\infty,g^N}.
\end{equation}
By Lemma \ref{lemma:Z}, there exists $\delta$ and $N_0$ depending on $k,T$ and $\eta$ such that for all $N>N_0$, it holds that
\begin{equation}\label{equ:Zeta}
  \mathbb{P}_N(\{Z_N<\delta\})\leq 4^{-1}\eta .
\end{equation}

It is enough to prove that for any $\rho, \eta >0$, there exists $M, r$ and $N_0$ depending on $\rho,\eta$ such that for all $N\geq N_0$, it holds that
\begin{equation}\label{eq:W(rho,eta)}
\mathbb{P}_N\Big(\mathsf{U}_{[-T, T]}(\rho,r)\cap\{Z_N\geq \delta\}\cap \mathsf{S}_{N,M}\Big)> 1-\eta.
\end{equation}
	
Observe that the event $\{Z_N\geq \delta\}\cap \mathsf{S}_{N,M}$ is $\mathcal{F}_{\textrm{ext}}([1,K]_{\mathbb{Z}}\times\Lambda_d^N(-T,T))$-measurable. This implies the left-hand side of (\ref{eq:W(rho,eta)}) equals
\begin{equation}\label{eq.noleft}
	\mathbb{E}_N\Big[\mathbbm{1}_{Z_N\geq \delta}\mathbbm{1}_{\mathsf{S}_{N,M}}\mathbb{E}_N\big[\mathbb{1}_{\mathsf{U}_{[-T,T]}(\rho,r)}\big\vert \mathcal{F}_{\textrm{ext}}([1,K]_{\mathbb{Z}}\times\Lambda_d^N(-T,T))\big]\Big].
	\end{equation}
Denote \begin{equation}\label{eq.Pnn}
 {\tilde{\mathbb{P}}}_N:={\mathbb{P}}^{1,K,\Lambda_d^N(-T,T),\vec{x}^N,\vec{y}^N,+\infty,g^N}_{\Hamd^N,\HrwN}.
\end{equation}
From the $(\Hamd^N,\HrwN)$-Gibbs property enjoyed by $\mathcal{L}^N$, it holds $\mathbb{P}_N$-almost surely that,
	\begin{align}\label{eq:cond_W}
	\mathbb{E}_N\big[\mathbb{1}_{\mathsf{U}_{[-T,T]}(\rho,r)}\big\vert \mathcal{F}_{\textrm{ext}}([1,K]_{\mathbb{Z}}\times\Lambda_d^N(-T,T))\big]={\tilde{\mathbb{P}}}_N\big(\mathsf{U}_{[-T,T]}(\rho,r)\big).
	\end{align}
\begin{lemma}\label{lem:u(-T,T)}
Fix arbitrary positive numbers $\rho,\eta,\delta$ and $M$. There exists $r$ and $N_0$ depending on $k,T, \rho,\eta,\delta$ and $M$ such that for all $N\geq N_0$,
	\begin{equation*}
	\mathbbm{1}_{Z_N\geq \delta}\mathbbm{1}_{\mathsf{S}_{N,M}} \tilde{\mathbb{P}}_N\big(\mathsf{U}_{[-T,T]}(\rho,r)\big)\geq (1-2^{-1}\eta)\mathbbm{1}_{Z_N\geq \delta}\mathbbm{1}_{\mathsf{S}_{N,M}}.
	\end{equation*}
	\end{lemma}
Let us assume Lemma~\ref{lem:u(-T,T)} holds for the moment and proceed to derive \eqref{eq:W(rho,eta)}. Recall that $\delta$ and $M$ are determined by $k,T$ and $\eta$ such that \eqref{equ:SNM} and \eqref{equ:Zeta} hold. Combining Lemma \ref{lem:u(-T,T)} and \eqref{eq:cond_W}, there exists $r$ and $N_0$ depending on  $k,T,\rho$ and $\eta$ such that for all $N\geq N_0$, it holds that
 \begin{equation}\label{equ:3.14}
	\eqref{eq.noleft} \geq (1-2^{-1}\eta )\mathbb{E}_N[\mathbb{1}_{Z_N\geq \delta}\mathbb{1}_{\mathsf{S}_{N,M}}].
 \end{equation}
Combining \eqref{equ:SNM}, \eqref{equ:Zeta} and \eqref{equ:3.14}, we conclude that 
\[
\eqref{eq.noleft} \geq (1-\eta/2)^2 > 1-\eta.
\]
This completes the proof of \eqref{eq:W(rho,eta)}.
\end{proof}  


\begin{proof}[Proof of Lemma \ref{lem:u(-T,T)}]
The proof is based on the KMT coupling (Assumption A4) and the modulus of continuity bound for free Brownian bridges.

Recall that $\tilde{\mathbb{P}}_N$ is defined in \eqref{eq.Pnn}. We write $\tilde{\mathbb{P}}_{\free,N}$ for ${\mathbb{P}}^{1,K,\Lambda_d^N(-T,T),\vec{x}^N,\vec{y}^N}_{\free,\HrwN}$ and $\tilde{\mathbb{E}}_{\free,N}$ for its expectation. Also we denote $W_N$ as the $N$-th Boltzmann weight corresponding to $(\Hamd^N,\Lambda^N_d)$. See Definition~\ref{def:RWB_Gibbs_ensemble} for more details.
From Definition~\ref{def:RWB_Gibbs_ensemble}, we have
\begin{align*}
\tilde{\mathbb{P}}_N\big(\mathsf{U}^{\neg}_{[-T,T]}(\rho,r)\big)& = Z_N^{-1}   {\tilde{\mathbb{E}}}_{\free,N} [\mathbb{1}_{\mathsf{U}^{\neg}_{[-T,T]}(\rho,r)} W_N].  
\end{align*}
Here $Z_N$ is given in \eqref{equ:def_someZN}. Because $W_N$ takes values in $[0,1]$, we have
\begin{align}\label{{modulus_P_upper}}
\mathbbm{1}_{Z_N\geq \delta} \tilde{\mathbb{P}}_N\big(\mathsf{U}^{\neg}_{[-T,T]}(\rho,r)\big)  \leq   \delta^{-1} \tilde{\mathbb{P}}_{\free,N}\big(\mathsf{U}^{\neg}_{[-T,T]}(\rho,r)\big)\mathbbm{1}_{Z_N\geq \delta}.
\end{align}
Therefore, the proof of Lemma~\ref{lem:u(-T,T)} is reduced to show that for any $\rho, \eta,\delta$ and $M$, there exit $r$ and $N_0$ such that for all $N \geq N_0$, it holds that
\begin{equation}\label{equ:1226511}
 \mathbbm{1}_{\mathsf{S}_{N,M}} \tilde{\mathbb{P}}_{\free,N}\big(\mathsf{U}^{\neg}_{[-T,T]}(\rho,r)\big)  \leq 2^{-1}\delta \eta \mathbbm{1}_{\mathsf{S}_{N,M}}. 
\end{equation}

The rest of the proof is devoted to show \eqref{equ:1226511}. From now one we assume each component of $\vec{x}^N$ and $\vec{y}^N$ is contained in $[-M,M]$. We write $\bar{S}^N=(\bar{S}^N_1,\dots,\bar{S}^N_k)$ for random curve distributed as $\tilde{\mathbb{P}}_{\free,N}$. We denote $B=(B_1,B_2,\dots, B_k)$ as Brownian bridges distributed as $\mathbb{P}^{1,k,(-T,T),\vec{x}^N,\vec{y}^N}_{\free}$. See Definition~\ref{def:H_Brownian} for detailed definition. We will use $\mathsf{U}_{[-T,T]}(\bar{S}^N, \rho,r)$ and $\mathsf{U}_{[-T,T]}(B,\rho,r)$ to denote the event defined in \eqref{def:eventU} for $\bar{S}^N$ or $B$ respectively.\\ 

From Assumption A4, $\bar{S}^N$ and $B$ can be coupled in one probability space and we write $\mathbb{P}^N_{\textup{cpl}}$ for the coupling measure. Take $b_1=1$ and $b_2=1$ in Assumption  A4. There exist $a,c >0$ depending on $T,M$ and $k$ such that 
\begin{equation*}
\mathbb{P}^N_{\cpl}\left(\mathsf{D}_N^{\neg} \right) \leq cd_N,
\end{equation*}
where
\begin{align*}
\mathsf{D}_N=\left\{ \sup_{i\in [1,k]_{\mathbb{Z}}, u\in[-T,T]} \left\vert \bar{S}^N_i(u)-B(u)\right\vert < ad_N^{1/2}\log (2d_N^{-1}T) \right\}.
\end{align*}

We can now fix $N_0$. Let $N_0$ be the smallest integer such that for all $N\geq N_0$, we have $ad_N^{1/2}\log (2d_N^{-1}T)< 4^{-1}\rho$ and $cd_N <4^{-1}\delta\eta$. This implies that
\begin{align}\label{equ:1226554}
 \mathsf{D}_N\cap  \mathsf{U}^{\neg}_{[-T,T]}(\bar{S}^N,\rho,r) \subset \mathsf{U}^{\neg}_{[-T,T]}(B,2^{-1}\rho,r). 
\end{align}
and that
\begin{equation}\label{equ:1226543}
\mathbb{P}^N_{\cpl}(\mathsf{D}_N^{\neg}) <   4^{-1}  \delta\eta .
\end{equation}

Next, we fix the value of $r$. From the modulus of continuity estimate for free Brownian bridges, there exists $r$ depending on $k,T, \rho,\eta,\delta$ and $M$ such that
\begin{align}\label{BBridge_modulus}
\mathbb{P}^N_{\cpl}\left(U^{\neg}_{[-T,T]}\left(B, 2^{-1}  \rho  ,r\right)\right)\leq 4^{-1}  {\delta\eta}.
\end{align}

Combining \eqref{equ:1226554}, \eqref{equ:1226543} and \eqref{BBridge_modulus}, we get
\begin{align*}
\mathbb{P}^N_{\cpl}\big(\mathsf{U}^{\neg}_{[-T,T]}(\bar{S}^N, \rho,r)\big)\leq \mathbb{P}^N_{\cpl}\left( \mathsf{D}^{\neg}_N  \right)+\mathbb{P}^N_{\cpl}\big(\mathsf{U}^{\neg}_{[-T,T]}(B, 2^{-1}\rho,r)\big)\leq 2^{-1}\delta\eta.
\end{align*}
This finishes the proof of \eqref{equ:1226511}.
\end{proof}
\subsection{Proof of Theorem \ref{theorem:main} (2)}\label{proof of main_thm (2)}
In this subsection, we demonstrate that all sub-sequential limiting line ensembles enjoy the {H}-Brownian Gibbs property.

\begin{proof}[Proof of Theorem \ref{theorem:main} (2)] Fix $T>0$ and $k\in [1,K]_{\mathbb{Z}}$. Without loss of generality, we assume that $d_N$ is summable and that $\mathcal{L}^N|_{[1,k]_{\mathbb{Z}}\times [-T,T]}$ converges weakly to a line ensemble $\mathcal{L}^\infty|_{[1,k]_{\mathbb{Z}}\times [-T,T]}$. See Definition~\ref{def:CONVERGE} for the definition of the weak convergence. We denote will denote them by $\mathcal{L}^N$ and $\mathcal{L}^\infty$ for simplicity. Fix an index $i\in [1,k-1]_{\mathbb{Z}}$ and two times $a,b\in [-T,T]$ with $a<b$. We will show that the law of $\mathcal{L}^{\infty}$ is unchanged if $\mathcal{L}^{\infty}_i$ is resampled between $a$ and $b$ according to the law $\mathbb{P}_{\mathbf{H}}^{i,i,(a,b),x^ ,y^\infty ,f^\infty ,g^\infty }$ with $x^\infty=\mathcal{L}^\infty_i(a), y^\infty=\mathcal{L}^\infty_i(b)$, $f^\infty=\mathcal{L}^\infty_{i-1}$, $g^\infty=\mathcal{L}^\infty_{i+1} $ and $\mathbf{H}=e^x$. The argument can easily be generalized to multiple consecutive curves. Note that the {H}-Brownian Gibbs property is equivalent to this resampling invariance, hence finishing the proof.

Since the Banach space $C([1,k]_{\mathbb{Z}}\times [-T,T],\mathbb{R})$ equipped with supremum norm is separable, the Skorohod representation theorem applies. Therefore there exists a probability space $(\Omega,\mathcal{B},\mathbb{P})$ on which all of $\mathcal{L}^N$ for $N\in\mathbb{N}\cup\{\infty\}$ are defined and almost surely $\mathcal{L}^N(\omega)\to\mathcal{L}^\infty(\omega)$ in the topology of $C([1,k]_{\mathbb{Z}}\times [-T,T],\mathbb{R})$.

Let $L=b-a$. Recall that $\bar{S}^N_{L,z}$ is the random walk bridge defined in in Definition~\ref{def:RW_ensemble} using $\HrwN$ and $\Lambda_d^N=d_N\mathbb{Z}$. From Assumption A4, there exists a probability space $(\Omega_{\textup{cpl}}^N,\mathcal{B}_{\textup{cpl}}^N,\mathbb{P}_{\textup{cpl}}^N)$ on which all of random walk bridges $\bar{S}^N_{L,z},\ z\in\mathbb{R}$ and a Brownian bridge $B_{L}$ are defined. $\{\bar{S}^N_{L,z}(u)\}_{z\in\mathbb{R}}$ is measurable in $z$. Moreover, by taking $b_1=1$ and $b_2=1$ in  Assumption A4, there exit  $0<a_1,a_2<\infty$ such that
\begin{align*}
\mathbb{P}^N_{\textup{cpl}} \left( \sup_{0 \leq u\leq L}\left|B_{L}(u)+L^{-1}z  {u}  - \bar{S}^N_ {L,z}(u) \right| > a_1d_N^{1/2} \log\left(d_N^{-1} L \right) \right)\leq a_2(d_N^{-1} L )^{-1}e^{z^2/L}.
\end{align*}
We further put all such coupling together and construct a probability space $(\Omega_{\textup{cpl}},\mathcal{B}_{\textup{cpl}},\mathbb{P}_{\textup{cpl}})$ on which all of $\bar{S}^N_{L,z},\ z\in\mathbb{R}, N\in\mathbb{N}$ and a Brownian bridge $B_{L}$ are defined. Moreover, the above estimates hold with $\mathbb{P}_{\textup{cpl}}^N$ replaced by $\mathbb{P}_{\textup{cpl}}$. Suppose we have a bounded sequence $z_N\in\mathbb{R}$ converging to $z_\infty$, then
\begin{align*}
\sum_{N }\mathbb{P}_{\textup{cpl}} \left( \sup_{0\leq u\leq L}\left|B_{L}(u)+L^{-1}z_N  u   - \bar{S}^N_{L,z_N}(u) \right|>a_1d_N^{1/2} \log\left(d_N^{-1}  L\right) \right)< \infty.
\end{align*}
Through the Borel-Cantelli lemma, it holds $\mathbb{P}_{\textup{cpl}}$-{almost surely} that
\begin{equation}\label{converge_bridges}
\begin{split}
&\sup_{u\in [0,L]} \left|B_{L}(u)+L^{-1}z_{\infty} u - \bar{S}^N_{L,z_N}(u) \right|\\
\leq & \sup_{u\in [0,L]} \left|B_{L}(u)+L^{-1}z_{N} u - \bar{S}^N_{L,z_N}(u) \right| + |z_N-z_\infty|\to 0.
\end{split}
\end{equation}

Let $\{(\bar{S}^{N,\ell}_{L,z},B_{L}^\ell)\}_{\ell\in\mathbb{N}}$ be a sequence of such coupling and independent between different $\ell$. Let $\{U_\ell\}_{\ell\in\mathbb{N}}$ be a sequence of independent random variables, each having the uniform distribution on $[0,1]$. We further augment the probability space $(\Omega,\mathcal{B},\mathbb{P})$ to include all such random variables in an independent manner. \\

In the first step, we define the $\ell$-th candidate for the resampled bridge.  As $u\in [a,b]$, define
\begin{align*}
\mathcal{L}^{N,\ell}_i(u)=\mathcal{L}^{N}_i(a)+ \bar{S}^{N,\ell}_{L,\mathcal{L}^{N}_i(b)-\mathcal{L}^{N}_i(a)}(u-a),
\end{align*}
and $\mathcal{L}^{N,\ell}_i(u)=\mathcal{L}^{N}_i(u)$ for $u\in [-T,a)\cup (b,T]$. Because $\bar{S}^{N,\ell}_{L,z}$ is measurable in $z$, $\mathcal{L}^{N,\ell}_i$ is measurable. Similarly, as $u\in [a,b]$, define
\begin{align*}
\mathcal{L}^{\infty,\ell}_i(u)=\mathcal{L}^{\infty}_i(a)+B^{\ell}_{L}(u-a)+\frac{u-a}{b-a}\cdot (\mathcal{L}^{\infty}_i(b)-\mathcal{L}^{\infty}_i(a)),
\end{align*}
and $\mathcal{L}^{\infty,\ell}_i(u)=\mathcal{L}^{\infty}_i(u)$ for $u\in [-T,a)\cup (b,T]$.\\

For $N\in\mathbb{N}\cup\{\infty\}$, we set $x^N=\mathcal{L}^N_i(a), y^N=\mathcal{L}^N_i(b)$, $f^N=\mathcal{L}^N_{i-1}$ and $g^N=\mathcal{L}^N_{i+1} $. In the second step, we check whether 
\begin{equation}
U_\ell\leq W(N,\ell):=W_{\Hamd^N}^{i-1,i+1,\Lambda^N_d(a,b),x^N ,y^N ,f^N ,g^N}(\mathcal{L}^{N,\ell}_i),
\end{equation}
and {\bf accept} the candidate resampling $\mathcal{L}^{N,\ell}_i$ if this event occurs. We define accordingly
\begin{equation}
W(\infty,\ell):=W_{\Ham}^{i-1,i+1,(a,b),x^{\infty} ,y^{\infty} ,f^{\infty} ,g^{\infty} }(\mathcal{L}^{ \infty,\ell}_i).
\end{equation}

For $N\in\mathbb{N}\cup\{\infty\}$, define $\ell(N)$ to be the minimal value of $\ell$ for which we accept $\mathcal{L}^{N,\ell}_i$. In Lemma~\ref{lem:lfinite} below, we show that $\ell(\infty)$ is almost surely finite. The argument for $\ell(N)$ is analogous. Write $\mathcal{L}^{N,\textup{re}}$ for the line ensemble with the $i$-th line replaced by $\mathcal{L}^{N,\ell(N)}_i$. Because $\ell(N)$ takes discrete values, $\mathcal{L}^{N,\textup{re}}$ is measurable.  The random walk Gibbs property is equivalent to the fact that for $N\in\mathbb{N}$,
\begin{align}\label{resample}
\mathcal{L}^{N,\textup{re}}\overset{(d)}{=}\mathcal{L}^{N}.
\end{align}

Our {\bf goal} is to show the same equality holds for $N=\infty$, which verifies the $\Ham$-Brownian Gibbs property for the limiting line ensembles. For the moment we assume $\ell(N)$ converges to $\ell(\infty)$ with $\ell(\infty)$ bounded almost surely (which we will prove in the lemmas following later) and we proceed to finish the proof of Theorem~\ref{theorem:main}(2).  

From \eqref{converge_bridges} and the independence among $\mathcal{L}^N$ and $\{  \bar{S}^{N,\ell}_{L,z},B_{L}^\ell\}_{\ell\in\mathbb{N}}$, one obtains almost surely
\begin{align}\label{KMT_application}
\sup_{u\in [a,b]} \left| B^{\ell(\infty)}_{L}(u-a)+\frac{u-a}{b-a}\cdot (\mathcal{L}^{\infty}_i(b)-\mathcal{L}^{\infty}_i(a))- \bar{S}^{N,\ell(N)}_{L,\mathcal{L}^{N}_i(b)-\mathcal{L}^{N}_i(a)}(u-a)\ \right|\to 0
\end{align}

Here we used the independence among $\mathcal{L}^N$ and $\{ {S}^{N,\ell}_{L,z},B_{L}^\ell\}_{\ell\in\mathbb{N}}$\ to ensure \eqref{converge_bridges} can be applied to $z_N=\mathcal{L}^N_i(b)-\mathcal{L}^N_i(a)$, $z_{\infty}=\mathcal{L}^{\infty}_i(b)-\mathcal{L}^{\infty}_i(a)$ and the convergence still holds almost surely. Then $\mathcal{L}^{N,\textup{re}}$ converges to $\mathcal{L}^{\infty,\textup{re}}$ in $C( [-T,T],\mathbb{R})$ almost surely and thus $\mathcal{L}^{\infty,\textup{re}}\overset{(d)}{=}\mathcal{L}^{\infty}$. We complete the proof of Theorem \ref{theorem:main} (2).
\end{proof}

\begin{lemma}\label{lem:lfinite}
Almost surely $\ell(\infty)$ is finite.
\end{lemma}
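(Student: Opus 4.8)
The plan is to exhibit the acceptance mechanism defining $\ell(\infty)$ as a sequence of conditionally i.i.d.\ Bernoulli trials whose common (random) success probability is strictly positive, so that the first success occurs at an almost surely finite index. Work on the augmented probability space constructed above and let $\mathcal{G}$ be the $\sigma$-algebra generated by the limiting ensemble $\mathcal{L}^{\infty}$ (equivalently, by the Skorohod data for $\{\mathcal{L}^{N}\}_{N\in\mathbb{N}\cup\{\infty\}}$); by construction the families $\{B_{L}^{\ell}\}_{\ell\in\mathbb{N}}$ and $\{U_{\ell}\}_{\ell\in\mathbb{N}}$ are mutually independent and independent of $\mathcal{G}$.

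The first step is to check that, conditionally on $\mathcal{G}$, the weights $W(\infty,\ell)$, $\ell\geq 1$, are i.i.d.\ and take values in $(0,1]$. Indeed, on $[a,b]$ the candidate $\mathcal{L}^{\infty,\ell}_i$ is the Brownian bridge $\mathcal{L}^{\infty}_i(a)+B_{L}^{\ell}(\cdot-a)+\tfrac{\cdot-a}{b-a}(\mathcal{L}^{\infty}_i(b)-\mathcal{L}^{\infty}_i(a))$, so $W(\infty,\ell)$ is $\sigma(\mathcal{G},B_{L}^{\ell})$-measurable and the $W(\infty,\ell)$ are conditionally i.i.d.\ given $\mathcal{G}$. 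Since $H(x)=e^{x}\geq 0$, the Boltzmann weight \eqref{def:Boltzmann_Brownian} is an exponential of a nonpositive quantity, hence $W(\infty,\ell)\leq 1$; and it is strictly positive because $\mathcal{L}^{\infty}_{i-1},\mathcal{L}^{\infty}_i,\mathcal{L}^{\infty}_{i+1}$ are continuous, hence bounded, on the compact interval $[a,b]$ while $B_{L}^{\ell}$ is almost surely continuous, so the two integrals $\int_{a}^{b}e^{\mathcal{L}^{\infty,\ell}_i(u)-\mathcal{L}^{\infty}_{i-1}(u)}\,du$ and $\int_{a}^{b}e^{\mathcal{L}^{\infty}_{i+1}(u)-\mathcal{L}^{\infty,\ell}_i(u)}\,du$ are almost surely finite (if $i=1$ the first term is absent, as $\mathcal{L}^{\infty}_{0}\equiv+\infty$ forces that contribution to vanish). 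Therefore
\[
p \;:=\; \mathbb{E}\big[\,W(\infty,1)\,\big|\,\mathcal{G}\,\big]\;=\;Z_{H}^{i-1,i+1,(a,b),\mathcal{L}^{\infty}_i(a),\mathcal{L}^{\infty}_i(b),\mathcal{L}^{\infty}_{i-1},\mathcal{L}^{\infty}_{i+1}},
\]
the normalizing constant \eqref{def:normalcont_Brownian} of the corresponding $H$-Brownian bridge line ensemble, is strictly positive $\mathbb{P}$-almost surely, being the conditional expectation of an almost surely strictly positive random variable.

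The second step is a conditional Borel--Cantelli argument. Each $U_{\ell}$ is uniform on $[0,1]$ and independent of $\sigma\big(\mathcal{G},(B_{L}^{\ell'})_{\ell'\in\mathbb{N}}\big)$, so conditionally on $\sigma\big(\mathcal{G},(B_{L}^{\ell'})_{\ell'\in\mathbb{N}}\big)$ the acceptance events $A_{\ell}=\{U_{\ell}\leq W(\infty,\ell)\}$ are independent with conditional probabilities $W(\infty,\ell)$. Integrating out the bridges and using that the $W(\infty,\ell)$ are conditionally i.i.d.\ given $\mathcal{G}$,
\[
\mathbb{P}\Big(\,\bigcap_{\ell=1}^{m}A_{\ell}^{\,c}\;\Big|\;\mathcal{G}\,\Big)\;=\;\mathbb{E}\Big[\,\textstyle\prod_{\ell=1}^{m}\big(1-W(\infty,\ell)\big)\;\Big|\;\mathcal{G}\,\Big]\;=\;(1-p)^{m}.
\]
Since $p>0$ almost surely, the right-hand side converges to $\mathbbm{1}\{p=0\}=0$ as $m\to\infty$, whence $\mathbb{P}(\ell(\infty)=\infty\mid\mathcal{G})=0$ almost surely; taking expectations yields $\mathbb{P}(\ell(\infty)=\infty)=0$, i.e.\ $\ell(\infty)<\infty$ almost surely.

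The only genuinely delicate part is the measurability and conditional-independence bookkeeping behind the claim that the $A_{\ell}$ form conditionally i.i.d.\ Bernoulli$(p)$ trials with $p>0$; once this is in place, finiteness of $\ell(\infty)$ is just the familiar observation that a geometric random variable is almost surely finite. No quantitative input (the KMT coupling of Assumption A4, or the lower bound on $Z_{N}$ of Lemma~\ref{lemma:Z}) is needed here: strict positivity of the relevant $Z_{H}$, which follows directly from continuity of the limiting curves on the compact interval, suffices.
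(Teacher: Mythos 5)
Your argument is correct and takes essentially the same route as the paper: condition on the limiting boundary data, observe that the acceptance weight $W(\infty,\ell)$ is almost surely strictly positive, and conclude that the acceptance trials form a conditionally i.i.d.\ Bernoulli sequence with strictly positive success probability, whose first success is almost surely finite. The only cosmetic difference is that you work directly with $p=\mathbb{E}[W(\infty,1)\mid\mathcal{G}]=Z_H>0$ rather than the paper's phrasing that $W(\infty,\ell)$ exceeds some $\epsilon>0$ with probability at least $\epsilon$; both express the same geometric-tail bound.
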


\begin{proof}
For fixed $\mathcal{L}^{\infty}_i(a),\mathcal{L}^{\infty}_i(b),\mathcal{L}^{\infty}_{i-1},\mathcal{L}^{\infty}_{i+1}$, $W(\infty,\ell)$ (randomness coming from $B_{L}^\ell$) are i.i.d. in $\ell$ and are supported in $(0,1)$. Hence, for some $\epsilon>0$, $W(\infty,\ell)$ is at least $\epsilon$ with probability at least $\epsilon$, which implies that $\ell(\infty)$ is finite almost surely. 
\end{proof}

\begin{lemma}
Almost surely for all $\ell$, $\displaystyle\lim_{N\to\infty}W(N,\ell)=W(\infty,\ell)$.
\end{lemma}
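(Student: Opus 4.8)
The plan is to deduce the claim from the two deterministic comparison estimates already established --- Lemma~\ref{change_Hamiltonian}, which bounds the difference between the discrete weight $W_{H^N}$ and the continuous weight $W_{H}$ (with $H(x)=e^x$) on a fixed collection of curves, and Lemma~\ref{change_ensemble}, which bounds the difference of $W_H$ on two nearby collections of curves --- once we have shown that the resampled curve and its two neighbours converge uniformly on $[a,b]$. Fix $\ell\in\mathbb{N}$ throughout; since there are only countably many $\ell$, proving the convergence for each fixed $\ell$ on a full-measure event and then intersecting produces the ``for all $\ell$'' statement.

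\emph{Step 1 (uniform convergence of the relevant curves).} By the Skorohod coupling fixed in the proof of Theorem~\ref{theorem:main}(2) one has, almost surely, $\mathcal{L}^N\to\mathcal{L}^\infty$ uniformly on $[-T,T]$; in particular $\mathcal{L}^N_{i-1}\to\mathcal{L}^\infty_{i-1}$ and $\mathcal{L}^N_{i+1}\to\mathcal{L}^\infty_{i+1}$ uniformly on $[a,b]$, while $\mathcal{L}^N_i(a)\to\mathcal{L}^\infty_i(a)$ and $z_N:=\mathcal{L}^N_i(b)-\mathcal{L}^N_i(a)\to z_\infty:=\mathcal{L}^\infty_i(b)-\mathcal{L}^\infty_i(a)$. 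I would then show that $\mathcal{L}^{N,\ell}_i\to\mathcal{L}^{\infty,\ell}_i$ uniformly on $[-T,T]$ almost surely: outside $(a,b)$ these curves equal $\mathcal{L}^N_i$ and $\mathcal{L}^\infty_i$, so there is nothing to check there, whereas on $[a,b]$ the difference equals $(\mathcal{L}^N_i(a)-\mathcal{L}^\infty_i(a))+\big(\tilde S^{N,\ell}_{L,z_N}(t-a)-B^\ell_L(t-a)-\tfrac{t-a}{L}z_\infty\big)$, and the second bracket tends to $0$ uniformly in $t$ by \eqref{converge_bridges}. The one delicate point is that $z_N$ is random, whereas \eqref{converge_bridges} is stated for a deterministic bounded sequence converging to $z_\infty$; this is handled exactly as in the derivation of \eqref{KMT_application}. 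Namely, $\mathcal{L}^N$ is independent of $\{(\tilde S^{N,\ell}_{L,z},B^\ell_L)\}_\ell$, so conditioning on $\sigma(\mathcal{L}^N:N\in\mathbb{N})$ freezes $(z_N)$ into a fixed bounded convergent sequence, \eqref{converge_bridges} applies conditionally, and Fubini returns the almost sure statement on the full space. I expect this to be the main obstacle; the rest is soft.

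\emph{Step 2 (from curves to Boltzmann weights).} Write $\mathcal{M}^N:=(\mathcal{L}^N_{i-1},\mathcal{L}^{N,\ell}_i,\mathcal{L}^N_{i+1})$ and $\mathcal{M}^\infty:=(\mathcal{L}^\infty_{i-1},\mathcal{L}^{\infty,\ell}_i,\mathcal{L}^\infty_{i+1})$, so that $W(N,\ell)=W_{H^N}^{i-1,i+1,\Lambda^N_d(a,b),\dots}(\mathcal{M}^N)$ and $W(\infty,\ell)=W_{H}^{i-1,i+1,(a,b),\dots}(\mathcal{M}^\infty)$ with $H(x)=e^x$. Inserting the intermediate quantity $W_{H}^{i-1,i+1,(a,b),\dots}(\mathcal{M}^N)$ and using the triangle inequality, Lemma~\ref{change_Hamiltonian} bounds $\big|W_{H^N}^{\dots}(\mathcal{M}^N)-W_{H}^{\dots}(\mathcal{M}^N)\big|$ by $C_2\big(\omega_{(a,b)}(\mathcal{M}^N,1/N)+1/N\big)$, and Lemma~\ref{change_ensemble} bounds $\big|W_{H}^{\dots}(\mathcal{M}^N)-W_{H}^{\dots}(\mathcal{M}^\infty)\big|$ by $C_3\,\|\mathcal{M}^N-\mathcal{M}^\infty\|_{\infty,[a,b]}$. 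The second quantity tends to $0$ by Step~1 and the Skorohod convergence. For the first, $\mathcal{M}^N\to\mathcal{M}^\infty$ uniformly on $[a,b]$ with $\mathcal{M}^\infty$ almost surely continuous and hence uniformly continuous on the compact set $[a,b]$, so $\omega_{(a,b)}(\mathcal{M}^N,1/N)\le 2\|\mathcal{M}^N-\mathcal{M}^\infty\|_{\infty}+\omega_{(a,b)}(\mathcal{M}^\infty,1/N)\to 0$; thus this term also tends to $0$. (If $i=1$, the slot $\mathcal{L}_0\equiv+\infty$ contributes an identically vanishing interaction for every $N$ and is simply discarded before invoking the two lemmas.) This gives $W(N,\ell)\to W(\infty,\ell)$ almost surely for the fixed $\ell$, and intersecting over $\ell\in\mathbb{N}$ completes the proof.
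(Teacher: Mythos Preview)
Your proposal is correct and follows essentially the same route as the paper: first establish uniform convergence of the three relevant curves $(\mathcal{L}^N_{i-1},\mathcal{L}^{N,\ell}_i,\mathcal{L}^N_{i+1})$ on $[a,b]$ using the Skorohod coupling and the independence argument behind \eqref{KMT_application}, then pass to the Boltzmann weights via Lemmas~\ref{change_Hamiltonian} and~\ref{change_ensemble} together with the modulus-of-continuity bound $\omega(\mathcal{M}^N,1/N)\le 2\|\mathcal{M}^N-\mathcal{M}^\infty\|_\infty+\omega(\mathcal{M}^\infty,1/N)$. The paper's only cosmetic difference is that it uses a three-term triangle inequality (changing the Hamiltonian, then the resampled curve, then the boundary curves separately) rather than your two-term split that changes all three curves at once through Lemma~\ref{change_ensemble}; both are valid since that lemma controls the sup over indices $k_1-1$ through $k_2+1$.
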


\begin{proof}
Let $\mathsf{A}$ be the intersection of the following events:
\begin{itemize}
\item $\displaystyle\sum_{i=1}^{k-1} \sup_{u\in [-T,T]}|\mathcal{L}^N_i(u)-\mathcal{L}^{\infty}_i(u)|\to 0.$

\item For all $\ell\in\mathbb{N}$,

$\displaystyle\sup_{u\in [a,b]} \left| B^{\ell}_{L}(u-a)+\frac{u-a}{b-a}\cdot (\mathcal{L}^{\infty}_i(b)-\mathcal{L}^{\infty}_i(a))- \bar{S}^{N,\ell}_{L,\mathcal{L}^{N}_i(b)-\mathcal{L}^{N}_i(a)}(u-a) \right|\to 0.$ 
\end{itemize}

In view of \eqref{KMT_application}, $\mathsf{A}$ occurs with probability $1$. A direct consequence is that as $\mathsf{A}$ occurs, $\mathcal{L}^{N,\ell}$ converges uniformly to $\mathcal{L}^{\infty,\ell}$ for all $\ell$. In below we show that when $\mathsf{A}$ happens, $W(N,\ell)\to W(\infty,\ell)$. Recall that $x^N=\mathcal{L}^N_i(a), y^N=\mathcal{L}^N_i(b)$, $f^N=\mathcal{L}^N_{i-1}$ and $g^N=\mathcal{L}^N_{i+1} $.  We estimate

\begin{align*}
&|W(N,\ell)-W(\infty,\ell)|\\
\leq & \left| W_{\Hamd^N}^{i-1,i+1,\Lambda^N_d(a,b),x^N ,y^N ,f^N ,g^N }(\mathcal{L}^{N,\ell})-W_{\Ham}^{i-1,i+1,(a,b),,x^N ,y^N ,f^N ,g^N }(\mathcal{L}^{ N,\ell}) \right|\\
+&\left| W_{\Ham}^{i-1,i+1,(a,b),x^N ,y^N ,f^N ,g^N  }(\mathcal{L}^{ N,\ell})-W_{\Ham}^{i-1,i+1,(a,b),x^{\infty}  ,y^{\infty} ,f^N ,g^N  }(\mathcal{L}^{ \infty,\ell}) \right|\\
+&\left| W_{\Ham}^{i-1,i+1,(a,b),x^{\infty}  ,y^{\infty} ,f^N ,g^N  }(\mathcal{L}^{ \infty,\ell})-W_{\Ham}^{i-1,i+1,(a,b),x^{\infty}  ,y^{\infty} ,f^\infty ,g^\infty }(\mathcal{L}^{ \infty,\ell}) \right|.
\end{align*}

From Lemma \ref{change_Hamiltonian}, the first term is bounded by $$C_2\big(\omega_{(a,b)}(\mathcal{L}_{i-1}^{N },d_N)+\omega_{(a,b)}(\mathcal{L}_{i}^{N,\ell},d_N)+\omega_{(a,b)}(\mathcal{L}_{i+1}^{N },1/N)+d_N\big).$$
From
\begin{align*}
\omega_{(a,b)}(\mathcal{L}_{i}^{N,\ell},d_N)\leq \omega_{(a,b)}(\mathcal{L}_{i}^{\infty,\ell},d_N)+2\| \mathcal{L}_{i}^{N,\ell}-\mathcal{L}_{i}^{\infty,\ell} \|_{C ([a,b],\mathbb{R})},
\end{align*}
the first terms goes to zero.

From Lemma \ref{change_ensemble}, the second term is bounded by $$C_3 \| \mathcal{L}_{i}^{N,\ell}-\mathcal{L}_{i}^{\infty,\ell} \|_{C( [a,b],\mathbb{R})}$$ which converges to zero. 

From Lemma \ref{change_ensemble}, the last terms is bounded by
$$C_3\big( \| \mathcal{L}_{i-1}^{N }-\mathcal{L}_{i-1}^{\infty} \|_{C( [a,b],\mathbb{R})}+ \| \mathcal{L}_{i+1}^{N }-\mathcal{L}_{i+1}^{\infty} \|_{C( [a,b],\mathbb{R})}\big)$$
which again converges to zero.
\end{proof}

\begin{lemma}
Almost surely $\displaystyle\lim_{N\to\infty} \ell(N)=\ell(\infty)$.
\end{lemma}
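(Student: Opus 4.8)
The plan is to deduce this from the two preceding lemmas — finiteness of $\ell(\infty)$ and the convergence $W(N,\ell)\to W(\infty,\ell)$ for every $\ell$ — together with a zero-probability argument excluding exact ties between the uniform variables $U_\ell$ and the limiting weights $W(\infty,\ell)$. Since the line ensemble $\mathcal{L}^{N,\textup{re}}$ is produced by an accept/reject procedure and $\ell(N)$ is by definition the first accepted candidate index, it suffices to show that for $N$ large the $N$-th procedure accepts exactly the same index as the limiting procedure.

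First I would fix an almost sure event $\mathcal{A}$ on which simultaneously: (i) $m:=\ell(\infty)$ is finite (previous lemma); (ii) $W(N,\ell)\to W(\infty,\ell)$ as $N\to\infty$ for every $\ell\in\mathbb{N}$ (previous lemma); and (iii) $U_\ell\neq W(\infty,\ell)$ for every $\ell\in\mathbb{N}$. Event (iii) has full probability because $\{U_\ell\}_{\ell}$ was adjoined to the probability space independently of the coupled bridges $\{(\tilde S^{N,\ell}_{L,z},B_L^\ell)\}$ and of $\mathcal{L}^\infty$; hence, conditioning on the value of $W(\infty,\ell)$, the variable $U_\ell$ is still uniform on $[0,1]$, so $\mathbb{P}(U_\ell=W(\infty,\ell))=0$, and a countable union over $\ell$ remains null. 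One also notes that for each finite $N$ the procedure terminates almost surely, so $\ell(N)$ is well defined; this is already implicit in \eqref{resample}, and in any case will follow a posteriori from the bound $\ell(N)\le m$ proved below.

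On $\mathcal{A}$, minimality in the definition of $m=\ell(\infty)$ gives $U_m\le W(\infty,m)$ and $U_\ell>W(\infty,\ell)$ for every $\ell<m$; by (iii) all these inequalities are in fact strict (and recall $W(\infty,\ell)\in(0,1)$). Applying (ii) to the finitely many indices $\ell\in\{1,\dots,m\}$ and taking the largest of the resulting thresholds, there is a (random) $N_0$ such that for all $N\ge N_0$ one has $U_m<W(N,m)$ and $U_\ell>W(N,\ell)$ for every $\ell<m$. The first inequality says the level-$m$ candidate is accepted in the $N$-th procedure, so $\ell(N)\le m$; the second says every earlier candidate is rejected, so $\ell(N)\ge m$. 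Hence $\ell(N)=m=\ell(\infty)$ for all $N\ge N_0$, which proves the claim.

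The only point requiring care is (iii), i.e. that $U_\ell$ is genuinely independent of all the data entering $W(\infty,\ell)$ — the bridge $B_L^\ell$ and the boundary values $\mathcal{L}^\infty_{i-1},\mathcal{L}^\infty_{i+1},\mathcal{L}^\infty_i(a),\mathcal{L}^\infty_i(b)$ — but this is exactly how the augmented probability space was constructed in the proof of Theorem~\ref{theorem:main}(2). Everything else is immediate from the two previous lemmas, so there is no genuine analytic obstacle here; this is the discrete counterpart of the corresponding step in \cite[Proposition 5.2]{CH16}.
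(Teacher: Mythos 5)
Your proof is correct and follows essentially the same route as the paper: exploit the two preceding lemmas (finiteness of $\ell(\infty)$ and pointwise convergence $W(N,\ell)\to W(\infty,\ell)$), rule out the measure-zero coincidence $U_{\ell(\infty)}=W(\infty,\ell(\infty))$, and then use strictness of the finitely many accept/reject inequalities to fix $\ell(N)=\ell(\infty)$ for $N$ large. The only cosmetic difference is that you derive the no-tie event from the unconditional independence of $U_\ell$ and $W(\infty,\ell)$ (and take a union over all $\ell$), whereas the paper argues via the conditional uniform law of $U_{\ell(\infty)}$ on $[0,W(\infty,\ell(\infty))]$; both give the same almost-sure statement.
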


\begin{proof}
Let $\mathsf{A}'$ be the intersection of the event $\mathsf{A}$ above and 

\begin{itemize}
\item $\ell(\infty)<\infty$
\item $W(\infty,\ell(\infty))>U_{\ell(\infty)}$
\end{itemize}

The last condition occurs with probability $1$ since $W(\infty,\ell(\infty))\in (0,1)$ and, conditioned on $\{W(\infty,\ell(\infty))\}_{j=1}^{\ell(\infty)}$, $U_{\ell(\infty)}$ is the uniform distribution in $[0, W(\infty,\ell(\infty))]$. Then from $W(N,\ell(\infty))\to W(\infty,\ell(\infty))$, we have for $N$ large enough $W(N,\ell(\infty))>U_{\ell(\infty)}$ and then $\ell(N)\leq \ell(\infty)$. In particular,
\begin{align*}
\limsup_{n\to\infty}\ell(N)\leq \ell(\infty).
\end{align*}

On the other hand, for all $1\leq j\leq \ell(\infty)-1$, one has $W(\infty,j)<U_j$. Therefore $W(N,j)<U_j$ for $N$ large enough and hence
\begin{align*}
\liminf_{n\to\infty}\ell(N)\geq \ell(\infty).
\end{align*}
\end{proof}

\section{Proof of Two key Propositions}\label{propproofs}
	In this section, we will prove Propositions \ref{pro:1}, \ref{pro:2} by an induction on the index $k\in \mathbb{N}$. 
\subsection{Proof of Proposition \ref{pro:1}}
The $k=1$ case follows from the assumption of Theorem~\ref{theorem:main} since we assume that $\mathcal{L}^N_1(u)+u^2/2$ converges weakly as a process on $\mathbb{R}$ to a stationary process. From now on, we set $k\geq 2$ and assume Propositions~\ref{pro:1} and \ref{pro:1} have been verified for $k-1$. 

Now we set the constants used in this subsection. Throughout this subsection, we fix $\varepsilon\in (0,1)$, $x_0>0$ and $\overline{x}\in [-x_0,x_0] $.  Let ${R}_{k-1}( 8^{-1} \varepsilon)$ and $\hat{R}_{k-1}( 8^{-1} \varepsilon)$ be the constants in the Proposition~\ref{pro:1} and Proposition~\ref{pro:2} respectively. We set $\tau>0$ be the smallest number which satisfies the following conditions:
\begin{equation}\label{equ:tau}
\begin{split}
&\tau\geq 1,\ 4^{-1}\tau^2\geq R_{k-1}(8^{-1}\varepsilon)+\hat{R}_{k-1}(8^{-1}\varepsilon),\ \\
&\frac{2}{1-e^{-1/\tau}} e^{2e^{-1}\tau-\tau^{3}/32}\leq 2^{-1},\  \frac{2}{1-e^{-1/(4\tau)}} e^{8e^{-1}\tau-\tau^{3}/8} \leq 2^{-1}\varepsilon.
\end{split}
\end{equation}
The value of $\tau$ is also fixed throughout this subsection. We remark that $\tau$ depends only on $k$ and $\varepsilon$ but not on $x_0$.\\

We consider the linear functions $\ell_{\pm }(u)$ which agree with the parabola $-u^2/2$ at $u= \overline{x} \pm 2\tau$ and $u=\overline{x}\pm 4\tau$. Explicitly,
\begin{align*}
\ell_{\pm} (u)= &-(\overline{x}\mp 3\tau)u+(\overline{x}\mp 4\tau)(\overline{x}\mp 2\tau)/2.
\end{align*}
Define the events
\begin{align*}
\mathsf{Low}^N_{k,-} :=&\left\{ \sup_{u\in \Lambda_d^N[\overline{x}-4\tau,\overline{x}-2\tau] }\left(\mathcal{L}^N_k(u) -\ell_{\tau}(u)\right)\leq 0 \right\},\\
\mathsf{Low}^N_{k,+} :=&\left\{ \sup_{u\in \Lambda_d^N[\overline{x}+2\tau,\overline{x}+4\tau] }\left(\mathcal{L}^N_k(u) -\ell_{\tau}(u)\right)\leq 0 \right\},\\
\mathsf{Low}^N_{k}:=&\mathsf{Low}^N_{k,-}(\tau)\cup\mathsf{Low}^N_{k,+}(\tau).
\end{align*}
\begin{lemma}\label{lem:2.1}
There exists $N_{11}=N_{11}(k,x_0,\varepsilon)$ such that for all $N\geq N_{11}$, we have
\begin{align*}
\mathbb{P}\left( \mathsf{Low}^N_{k}  \right)<\varepsilon.
\end{align*}
\end{lemma}
\begin{proof}
We present only the proof for $\mathbb{P}(\mathsf{Low}^N_{k,-})<2^{-1}\varepsilon $ because the argument for $\mathsf{Low}^N_{k,+}$ is analogous. Let $ {R}_{k-1}(8^{-1}\varepsilon)$ and $\hat{R}_{k-1}(8^{-1}\varepsilon)$ be the constants in Proposition~\ref{pro:1} and Proposition~\ref{pro:2} respectively. Define the events
\begin{align*}
&\mathsf{G}^N_{k-1} :=\left\{ \mathcal{L}^N_{k-1}(u)-\ell_{ -}(u)\leq \hat{R}_{k-1}(8^{-1}\varepsilon)\ \textup{for}\ u=\overline{x}-2\tau\ \textup{and}\ u=\overline{x}-4\tau  \right\},\\
&\mathsf{H}_{k,k-1}^N :=\mathsf{Low}^N_{k,-} \cap \mathsf{G}^N_{k-1}, \ \mathsf{Fall}^N_{k-1}:=\left\{ \mathcal{L}^N_{k-1}(\overline{x})+\overline{x}^2/2\leq -R_{k-1}(8^{-1}\varepsilon)  \right\}.
\end{align*} 
From Propositions~\ref{pro:1} and \ref{pro:2}, for $N\geq \max\{N_1(k-1,x_0,8^{-1}\varepsilon), N_2(k-1,x_0+4\tau,8^{-1}\varepsilon)\}$, we have
\begin{align}\label{equ:2.-1}
 \mathbb{P}(\mathsf{G}^N_{k-1})\geq 1-4^{-1}\varepsilon,\  \mathbb{P}(\mathsf{Fall}^N_{k-1})< 8^{-1}\varepsilon. 
\end{align}

It is clear that the event $\mathsf{H}^N_{k,k-1}$ is $\mathcal{F}_{\textrm{ext}}\Big(\{k-1\},\Lambda_d^N(\overline{x}-4\tau,\overline{x}-2\tau)\Big)$-measurable. Therefore,
	\begin{equation*}
		\mathbb{P}\Big(\mathsf{H}^N_{k,k-1} \cap \mathsf{Fall}^N_{k-1} \Big )=\mathbb{E}\bigg[\mathbb{1}_{\mathsf{H}^N_{k,k-1}  }\mathbb{E}\Big[\mathbb{1}_{\mathsf{Fall}^N_{k-1} }\big\vert\mathcal{F}_{\textrm{ext}}\Big(\{k-1\},\Lambda_d^N(\overline{x}-4\tau,\overline{x}-2\tau)\Big)\Big]\bigg].
	\end{equation*}
Due to the $(\Hamd^N,\HrwN)$-Gibbs property of $\mathcal{L}^N$, it holds that
	\begin{equation*}
		\mathbb{E}\Big[\mathbb{1}_{\mathsf{Fall}^N_{k-1} }\big\vert\mathcal{F}_{\textrm{ext}}\Big(\{k-1\},\Lambda_d^N(\overline{x}-4\tau,\overline{x}-2\tau)\Big)\Big]=\mathbb{P}_{\Hamd^N,\HrwN}^{k-1,k-1,\Lambda_d^N(\overline{x}-4\tau,\overline{x}-2\tau),x,y,f,g}\big(\mathsf{Fall}^N_{k-1} \big),
	\end{equation*}
	where $x=\mathcal{L}^N_{k-1}(\overline{x}-4\tau)$, $y=\mathcal{L}^N_{k-1}(\overline{x}-2\tau)$ , $f=\mathcal{L}^N_{k-2}$ and $g=\mathcal{L}^N_{k}$. We claim that for $N$ large enough depending on $x_0$, we have
\begin{align}\label{equ:2.1}
	\mathbb{1}_{\mathsf{H}^N_{k,k-1} }\cdot\mathbb{P}_{\Hamd^N,\HrwN}^{k-1,k-1,\Lambda_d^N(\overline{x}-4\tau,\overline{x}-2\tau),x,y,f,g}\big(\mathsf{Fall}^N_{k-1} \big)\geq 2^{-1}\mathbb{1}_{\mathsf{H}^N_{k,k-1}}.
\end{align}
From \eqref{equ:2.1}, we have
\begin{align}\label{equ:2.0}
 \mathbb{P}\Big(\mathsf{Fall}^N_{k-1} \Big )\geq  \mathbb{P}\Big(\mathsf{H}^N_{k,k-1} \cap \mathsf{Fall}^N_{k-1} \Big )\geq 2^{-1} \mathbb{P}\Big(\mathsf{H}^N_{k,k-1} \Big ).
\end{align}
Combining \eqref{equ:2.-1} and \eqref{equ:2.0}, we conclude that
\begin{align*}
\mathbb{P}\left( \mathsf{Low}^N_{k,-}(\tau) \right)\leq  &\mathbb{P}\left( \left( \mathsf{G}^N_{k-1}(\tau) \right)^{\neg} \right)+\mathbb{P}\left(  \mathsf{H}^N_{k,k-1}(\tau) \right)\\
\leq   & \mathbb{P}\left( \left( \mathsf{G}^N_{k-1}(\tau) \right)^{\neg} \right)+2\mathbb{P}\left(  \mathsf{Fall}^N_{k-1}  \right)<2^{-1}\varepsilon.
\end{align*}
This is the desired result.\\

It remains to prove \eqref{equ:2.1}. We now use the stochastic monotonicity to simplify the boundary condition. Set
\begin{align*}
x_*=\ell_{-}(\overline{x}-4\tau)+\hat{R}_{k-1}(8^{-1}\varepsilon),&\ y_*=\ell_{-}(\overline{x}-2\tau)+\hat{R}_{k-1}(8^{-1}\varepsilon),\\ 
f_*(u)\equiv +\infty,& \ g_*(u)= \ell_{-}(u).
\end{align*}
  From the stochastic monotonicity,
\begin{align*}
&\mathbb{1}_{\mathsf{H}^N_{k,k-1} } \mathbb{P}_{\Hamd^N,\HrwN}^{k-1,k-1,\Lambda_d^N(\overline{x}-4\tau,\overline{x}-2\tau),x,y,f,g}\big(\mathsf{Fall}^N_{k-1} \big)\\
\geq & \mathbb{1}_{\mathsf{H}^N_{k,k-1} } \mathbb{P}_{\Hamd^N,\HrwN}^{k-1,k-1,\Lambda_d^N(\overline{x}-4\tau,\overline{x}-2\tau),x_*,y_*,f_*,g_*}\big(\mathsf{Fall}^N_{k-1} \big). 
\end{align*} 
It is then sufficient to show that
\begin{equation}\label{equ:2.2}
\mathbb{P}_{\Hamd^N,\HrwN}^{k-1,k-1,\Lambda_d^N(\overline{x}-4\tau,\overline{x}-2\tau),x_*,y_*,f_*,g_*}\big(\mathsf{Fall}^N_{k-1} \big)\geq 2^{-1}.
\end{equation}

\vspace{0.3cm}

To ease the notation, we write $\mathbb{P}_N$ and $\mathbb{E}_N$ respectively for the probability measure and the expectation for $ \mathbb{P}_{\free,\HrwN}^{k-1,k-1,\Lambda_d^N(\overline{x}-4\tau,\overline{x}-2\tau),x_*,y_*,f_*,g_*}.$ Also, we denote by $W_N$ for the functional $W^{k-1,k-1,\Lambda^N_d(\overline{x}-4\tau,\overline{x}-2\tau),x_*,y_*,f_*,g_*}_{\Hamd^N}.$ See Definition~\ref{def:RW_ensemble} for details. Moreover, we denote by $\mathbb{P}_{\infty}$, $\mathbb{E}_\infty$ and $W_\infty$ for the corresponding objects with random walk bridges replaced by Brownian bridges and $\Hamd^N$ replaced by $\Ham (x)=e^{x}$. See Definition~\ref{def:H_Brownian}. 
 
Since $W_N\leq 1$, we have
\begin{equation}\label{equ:2.3}
\begin{split}
\mathbb{P}_{\Hamd^N,\HrwN}^{k-1,k-1,\Lambda_d^N(\overline{x}-4\tau,\overline{x}-2\tau),x_*,y_*,f_*,g_*}\big(\big(\mathsf{Fall}^N_{k-1}\big)^{\neg} \big)  = &\mathbb{E}_N \left[  W_N \right]^{-1}   \mathbb{E}_N \left[ \mathbbm{1}_{(\mathsf{Fall}^N_{k-1})^{\neg}}\cdot W_N \right]\\
\leq&\mathbb{E}_N \left[  W_N \right]^{-1}   \mathbb{P}_N \left(   (\mathsf{Fall}^N_{k-1})^{\neg}  \right). 
\end{split}
\end{equation}

Under the law $\mathbb{P}_N$, the curve is distributed according to a $\HrwN$ random walk bridge with height $x_*$ and $y_*$ at the left and right end points respectively. Using $z_*=y_*-x_*$, it can be expressed explicitly as 
\begin{align*}
\overline{S}^N_{2\tau,z_*}(u-\overline{x}+4\tau)+x_*.
\end{align*}
See in Definition~\ref{def:RW_ensemble} for the definition of $\overline{S}^N_{2\tau,z_*}$. Through a direct calculation,
\begin{align*}
\mathbb{P}_N \left(   (\mathsf{Fall}^N_{k-1})^{\neg}  \right) =&\mathbb{P}\left( \overline{S}^N_{2\tau,z_*}(\tau)-2^{-1}z_*> 2^{-1}\tau^2 -R_{k-1}(8^{-1}\varepsilon)-\hat{R}_{k-1}(8^{-1}\varepsilon)\right)
\end{align*}
Let $N_0=N_0(2\tau,4^{-1}\tau^2,6\tau^2+6\tau x_0)$ be the constant in Lemma~\ref{lem:sup_est}. 
In view of \eqref{equ:tau} and Lemma~\ref{lem:sup_est},  for $N\geq N_0$ we have
\begin{align}\label{equ:2.4}
\mathbb{P}_N \left(   (\mathsf{Fall}^N_{k-1})^{\neg}  \right) \leq \mathbb{P}\left( \overline{S}^N_{2\tau,z_*}(\tau)-2^{-1}z_*> 4^{-1}\tau^2\right)\leq e^{-\tau^3/32}.
\end{align}

Next, we give a lower bound on $\mathbb{E}_N \left[  W_N \right]$. Recall that $B_L$ is a standard Brownian bridge on $[0,L]$. Under the law $\mathbb{P}_\infty$, the curve $S(u)$ is distributed as
$$ B_{2\tau}(u-\overline{x}+4\tau)+\ell_{-}(u)+\hat{R}_{k-1}(8^{-1}\varepsilon).$$ 
Together with $g_*(u)=\ell_{-}(u)$, $\hat{R}_{k-1}(8^{-1}\varepsilon)\geq 2$ and Lemma~\ref{Bbridge_sup}, this implies the event 
\begin{align*}
\mathsf{Sep}=\left\{  S(u)>g_*(u)+1\ \textup{  for all}\ u\in [\overline{x}-4\tau,\overline{x}-2\tau]  \right \}
\end{align*}
occurs with a probability at least $  1-e^{-1/\tau}$. Moreover, we have $$\mathbbm{1}_{\mathsf{Sep}}\cdot W_\infty\geq  e^{-2e^{-1}\tau}\mathbbm{1}_{\mathsf{Sep}}.$$ Taking expectation $\mathbb{E}_{\infty}$, it holds that 
$$\mathbb{E}_\infty[W_\infty]\geq ( 1-e^{-1/\tau}) e^{-2e^{-1}\tau} .$$
In view of Proposition \ref{pro:Z_comparison}, for $N$ large enough depending on $x_0$ and $\tau$, 
\begin{equation}\label{equ:2.5}
 \mathbb{E}_\N[W_N]\geq 2^{-1}( 1-e^{-1/\tau}) e^{-2e^{-1}\tau}. 
\end{equation}

Combining \eqref{equ:2.3}, \eqref{equ:2.4}, \eqref{equ:2.5} and \eqref{equ:tau}, we conclude that
\begin{align*}
\mathbb{P}_{\Hamd^N,\HrwN}^{k-1,k-1,\Lambda_d^N(\overline{x}-4\tau,\overline{x}-2\tau),x_*,y_*,f_*,g_*}\big(\big(\mathsf{Fall}^N_{k-1}\big)^{\neg} \big)\leq \frac{2}{1-e^{-1/\tau}} e^{2e^{-1}\tau-\tau^{3}/32} \leq 2^{-1}.
\end{align*}
This finishes the derivation of \eqref{equ:2.2}.
\end{proof}
We are now ready to prove Proposition~\ref{pro:1}.
\begin{proof}[Proof of Proposition~\ref{pro:1}]
We begin setting relevant constants. Recall that $\tau$ is the minimum number which satisfies \eqref{equ:tau}. Let $R_{k-1}(\varepsilon/(4\tau))$ be the constant in Proposition~\ref{pro:1} for the index $k-1$. Set the constants
\begin{align}\label{def:Rk}
\bar{R}=\max\{\tau^2/2,R_{k-1}(\varepsilon/(4\tau))+2\},\ R_k(\varepsilon)=\bar{R}+9\tau^2.
\end{align} 

Define the events
\begin{align*}
\mathsf{LB}^N_{k-1}=&\left\{ \inf_{u\in\Lambda^N_d[\overline{x}-2\tau,\overline{x}+2\tau]} (\mathcal{L}^N_{k-1}(u)+u^2/2)\geq -R_{k-1}(\varepsilon/(4\tau)) \right\},\\
\mathsf{E}^N_k=&\left\{ \inf_{u\in\Lambda^N_d[\overline{x}-2\tau,\overline{x}+2\tau]} (\mathcal{L}^N_k(u)+u^2/2)\leq -R_k(\varepsilon) \right\}.
\end{align*}
The goal is to show that for $N$ large enough depending on $k,x_0$ and $\varepsilon$, we have
\begin{equation}\label{equ:2.goal}
\mathbb{P}(\mathsf{E}^N_k)<3\varepsilon.
\end{equation}
Once we have \eqref{equ:2.goal}, the assertion in Proposition~\ref{pro:1} holds with $\varepsilon$ replaced by $3\varepsilon$. Then Proposition~\ref{pro:1} follows by replace $\varepsilon$ by $3^{-1}\varepsilon$.\\ 

We show that $\mathsf{LB}^N_{k-1}$ is typical. $[\overline{x}-2\tau,\overline{x}+2\tau]$ can be covered by $\lfloor 4\tau \rfloor$ intervals with length $2$. Applying Proposition~\ref{pro:1} to each interval, for $N\geq N_1(k-1,x_0+2\tau,\varepsilon/(4\tau))$ we have  
\begin{align}\label{equ:2.b}
\mathbb{P}((\mathsf{LB}^N_{k-1})^{\neg})< \varepsilon.
\end{align}
We claim that for $N$ large enough,
\begin{equation}\label{equ:2.6}
\mathbb{P}\left(\big( \mathsf{Low}^N_k \big)^{\neg}\cap\mathsf{LB}^N_{k-1}\cap\mathsf{E}^N_k \right)<\varepsilon.
\end{equation}
Combining \eqref{equ:2.b}, \eqref{equ:2.6} and Lemma~\ref{lem:2.1}, we have
\begin{align*}
\mathbb{P}(\mathsf{E}^N_k)\leq \mathbb{P}\left(\big( \mathsf{Low}^N_k \big)^{\neg}\cap\mathsf{LB}^N_{k-1}\cap\mathsf{E}^N_k \right)+\mathbb{P}\left( \mathsf{Low}^N_k  \right)+\mathbb{P}((\mathsf{LB}^N_{k-1})^{\neg})< 3\varepsilon.
\end{align*}
This concludes \eqref{equ:2.goal}.\\

It remains to prove \eqref{equ:2.6}. Define $\sigma_{- }$ to be the infimum over those $u\in\Lambda_d^N[\overline{x}-4\tau,\overline{x}-2\tau]$ such that $\mathcal{L}^N_{k}(u)-\ell_{-}(u)\geq 0 $. Likewise define $\sigma_{+ }$ to be the supremum over those $u\in\Lambda_d^N[\overline{x}+2\tau,\overline{x}+4\tau]$ such that $\mathcal{L}^N_{k}(u)-\ell_{+}(u)\geq 0$. It is easy to see that the interval $(\sigma_{-},\sigma_{+})$ forms a $\{k\}$-stopping domain (Definition \ref{def:stopping_time}) and the event $\big( \mathsf{Low}^N_k \big)^{\neg}\cap\mathsf{LB}^N_{k-1} $ is $\mathcal{F}_{\textrm{ext}}\Big(\{k\},\Lambda_d^N(\sigma_{- },\sigma_{+ })\Big)$-measurable. These imply that
 \begin{equation*}
 \mathbb{P}\left(\big( \mathsf{Low}^N_k \big)^{\neg}\cap\mathsf{LB}^N_{k-1}\cap\mathsf{E}^N_k \right)=\mathbb{E}\Bigg[\mathbb{1}_{ ( \mathsf{Low}^N_k )^{\neg}\cap\mathsf{LB}^N_{k-1}}\mathbb{E}\Big[\mathbb{1}_{\mathsf{E}^N_k}\Big \vert \mathcal{F}_{\textrm{ext}}\Big(\{k\},\Lambda_d^N(\sigma_{- },\sigma_{+ })\Big)\Big]\Bigg].
 \end{equation*}
From the strong Gibbs property,
 \begin{equation*}
 \mathbb{E}\Big[\mathbb{1}_{\mathsf{E}^N_k}\Big \vert \mathcal{F}_{\textrm{ext}}\Big(\{k\},\Lambda_d^N(\sigma_{- },\sigma_{+ })\Big)\Big]=\mathbb{P}_{\Hamd^N,\HrwN}^{k,k,\Lambda_d^N(\sigma_{- },\sigma_{+ }),x,y,f,g}(\mathsf{E}^N_k),
 \end{equation*}
where $x=\mathcal{L}^N_k(\sigma_{-})$, $y=\mathcal{L}^N_k(\sigma_{+})$, $f(u)=\mathcal{L}^N_{k-1}(u)$ and $g(u)=\mathcal{L}^N_{k+1}(u)$.

We now use the stochastic monotonicity to simplify the boundary condition. Let $\ell(u)$ be the linear function which agrees with $-u^2/2$ at $u=\sigma_\pm$. When $( \mathsf{Low}^N_k )^{\neg}$ occurs, we have
\begin{align*}
\mathcal{L}^N_k(\sigma_{\pm})\geq \ell_{\pm}(\sigma_\pm)\geq -\sigma_\pm^2/2-\tau^2/2=\ell(\sigma_\pm)-\tau^2/2. 
\end{align*} 
 When $\mathsf{LB}^N_{k-1}$ occurs, we have for all $u\in \Lambda^N_d[\sigma_-,\sigma_+]$ 
\begin{align*}
\mathcal{L}^N_{k-1}(u)\geq -u^2/2-R_{k-1}(\varepsilon/(4\tau))\geq \ell(u)-R_{k-1}(\varepsilon/(4\tau)). 
\end{align*}
In view of \eqref{def:Rk}, $( \mathsf{Low}^N_k(\tau))^{\neg}\cap \mathsf{LB}^N_{k-1}$ implies that
\begin{align*}
\mathcal{L}^N_k(\sigma_{\pm})\geq \ell(\sigma_\pm) -\bar{R},
\end{align*}
and that 
\begin{align*}
\mathcal{L}^N_{k-1}(u)\geq \ell(u)-\bar{R}+2\ \textup{ for all}\  u\in \Lambda^N_d[\sigma_-,\sigma_+].
\end{align*}
Set
\begin{align*}
x_*=\ell(\sigma_-) -\bar{R},\ y_*=\ell(\sigma_+) -\bar{R},\ f_*(u)=\ell(u) -\bar{R}+2,\ g_*(u)\equiv -\infty.
\end{align*} 
From the stochastic monotonicity, we have
\begin{align*}
\mathbb{1}_{ ( \mathsf{Low}^N_k(\tau))^{\neg}\cap\mathsf{LB}^N_{k-1}}\mathbb{E}\Big[\mathbb{1}_{\mathsf{E}^N_k}\Big \vert \mathcal{F}_{\textrm{ext}}\Big(\{k\},\Lambda_d^N(\sigma_{- },\sigma_{+ })\Big)\Big] 
\leq \mathbb{1}_{ ( \mathsf{Low}^N_k(\tau))^{\neg}\cap\mathsf{LB}^N_{k-1}}\mathbb{P}_{\Hamd^N,\HrwN}^{k,k,\Lambda_d^N(\sigma_{- },\sigma_{+ }),x_*,y_*,f_*,g_*}(\mathsf{E}^N_k). 
\end{align*}
It is then sufficient to show that
\begin{equation}\label{equ:2.7}
\mathbb{P}_{\Hamd^N,\HrwN}^{k,k,\Lambda_d^N(\sigma_{- },\sigma_{+ }),x_*,y_*,f_*,g_*}(\mathsf{E}^N_k)<\varepsilon.
\end{equation}
Since $ \sigma_+-\sigma_-\leq 8\tau$, we have $-u^2/2-8\tau^2\leq \ell(u)$. This implies
\begin{align*}
\mathsf{E}^N_k\subset \bar{\mathsf{E}}^N_k= \left\{ \inf_{u\in\Lambda^N_d[\sigma_- ,\sigma_+ ]} (\mathcal{L}^N_k(u)-\ell(u) )\leq -R_k(\varepsilon)+8\tau^2 \right\}.
\end{align*}
It is then sufficient to show that
\begin{equation}\label{equ:2.7}
\mathbb{P}_{\Hamd^N,\HrwN}^{k,k,\Lambda_d^N(\sigma_{- },\sigma_{+ }),x_*,y_*,f_*,g_*}(\bar{\mathsf{E}}^N_k)<\varepsilon.
\end{equation}

To ease the notation, we write $\mathbb{P}_N$ and $\mathbb{E}_N$ respectively for the probability measure and the expectation for $\mathbb{P}_{\free,\HrwN}^{k ,k ,\Lambda_d^N(\sigma_- ,\sigma_+ ),x_*,y_*,f_*,g_*}.$ Also, we denote by $W_N$ for the functional $W^{k ,k ,\Lambda_d^N(\sigma_- ,\sigma_+ ),x_*,y_*,f_*,g_*}_{\Hamd^N}.$ See Definition~\ref{def:RW_ensemble} for details. Moreover, we denote by $\mathbb{P}_{\infty}$, $\mathbb{E}_\infty$ and $W_\infty$ for the corresponding objects with random walk bridges replaced by Brownian bridges and $\Hamd^N$ replaced by $\Ham (x)=e^{x}$. See Definition~\ref{def:H_Brownian}.\\

Since $W_N\leq 1$, we have
\begin{equation}\label{equ:2.8}
\begin{split}
\mathbb{P}_{\Hamd^N,\HrwN}^{k,k,\Lambda_d^N(\sigma_{- },\sigma_{+ }),x_*,y_*,f_*,g_*}(\bar{\mathsf{E}}^N_k)= &\mathbb{E}_N \left[  W_N \right]^{-1}   \mathbb{E}_N \left[ \mathbbm{1}_{\bar{\mathsf{E}}^N_k }\cdot W_N \right]\\
\leq&\mathbb{E}_N \left[  W_N \right]^{-1}   \mathbb{P}_N \left( \bar{\mathsf{E}}^N_k  \right). 
\end{split}
\end{equation}

Under the law $\mathbb{P}_N$, the curve is distributed according to a $\HrwN$ random walk bridge with height $x_*$ and $y_*$ at the left and right end points respectively. Using $z_*=y_*-x_*$, it can be expressed explicitly as 
\begin{align*}
\overline{S}^N_{\sigma_+-\sigma_- ,z_*}(u-\sigma_- )+x_*.
\end{align*}
Through a direct calculation and \eqref{def:Rk},
\begin{align*}
\mathbb{P}_N \left( \bar{\mathsf{E}}^N_k  \right) =&\mathbb{P}\left(\inf_{u\in\Lambda^N_d[\sigma_- ,\sigma_+ ]} (\bar{S}^N_{\sigma_+-\sigma_-,z_*}(u-\sigma_-) -\ell(u)+\ell(\sigma_-) )\leq -\tau^2 \right).
\end{align*}
Let $N_0=N_0(8\tau,\tau^2,2^{-1}(x_0+4 \tau)^2)$ be the constant in Lemma~\ref{lem:sup_est}. From Lemma~\ref{lem:sup_est}, for $N\geq N_0$, we have
\begin{align}\label{equ:2.9}
\mathbb{P}_N \left( \bar{\mathsf{E}}^N_k  \right)\leq  e^{-\tau^3/8}.
\end{align}

Next, we give a lower bound on $\mathbb{E}_N \left[  W_N \right]$. Under the law $\mathbb{P}_\infty$, the curve $S(u)$ is distributed  as
$$ B_{\sigma_+-\sigma_- }(u-\sigma_- )+\ell(u)-\bar{R}.$$ 
Together with $f_*(u)=\ell(u)-\bar{R}+2$ and Lemma~\ref{Bbridge_sup}, this implies the event 
\begin{align*}
\mathsf{Sep}=\left\{  S(u)<f_*(u)-1\ \textup{  for all}\ u\in [\sigma_- ,\sigma_+]  \right \}
\end{align*}
occurs with probability $  1-e^{-2/(\sigma_+-\sigma_-) }\geq 1-e^{-1/(4\tau ) }$.  Moreover, we have 
$$\mathbbm{1}_{\mathsf{Sep}}\cdot W_\infty\geq  e^{-8e^{-1}\tau}\mathbbm{1}_{\mathsf{Sep}}.$$ Taking expectation $\mathbb{E}_{\infty}$, it holds that 
$$\mathbb{E}_\infty[W_\infty]\geq ( 1-e^{-1/(4\tau)}) e^{-8e^{-1}\tau} .$$
In view of Proposition \ref{pro:Z_comparison}, for $N$ large enough depending on $x_0$ and $\tau$, 
\begin{equation}\label{equ:2.10}
 \mathbb{E}_\N[W_N]\geq 2^{-1}( 1-e^{-1/(4\tau)}) e^{-8e^{-1}\tau}. 
\end{equation}
Combining \eqref{equ:2.8}, \eqref{equ:2.9}, \eqref{equ:2.10} and \eqref{equ:tau}, we conclude that
\begin{align*}
\mathbb{P}_{\Hamd^N,\HrwN}^{k-1,k-1,\Lambda_d^N(\sigma_- ,\sigma_+ ),x_*,y_*,f_*,g_*}\big( \bar{\mathsf{E}}^N_k\big)\leq \frac{2}{1-e^{-1/(4\tau)}} e^{8e^{-1}\tau-\tau^{3}/8} \leq \varepsilon.
\end{align*}
This finishes the derivation of \eqref{equ:2.7}.
\end{proof}

\subsection{Proof of Proposition \ref{pro:2}}

Our proof proceeds by induction on the curve index $k$. For the case $k=1$, Proposition \ref{pro:2} follows from assumption in Theorem~\ref{theorem:main}. The general case is $k\geq 3$, and the case $k=2$ is a specialization of the $k\geq 3$ proof. So, from here on we will assume that $k\geq 3$. 

We will apply Proposition \ref{pro:1} for indices $k-2,k-1$ and $k$ and Proposition \ref{pro:2} for index $k-1$. The idea is to show that should the index $k$ curve be too high at some time $u\in [\bar{x},\bar{x}+\tfrac{1}{2}]$ then so too must the index $k-1$ curve be high at some point between $[x,\bar{x}+2]$. This violates the index $k-1$ result of Proposition \ref{pro:2} assumed by the induction, and hence proves the index $k$ case.\\

Now we set the constants and events which will be used in this subsection. Throughout this subsection, we fix $\varepsilon\in (0,1)$, $x_0>0$ and $\overline{x}\in [-x_0,x_0] $. We use $M,r_k, \hat{r}_{k-1}$ and $\hat{r}_k$ as positive parameters. Consider the event  
\[
\mathsf{E}^N_k(\hat{r}_k)=\bigg\{\sup_{u\in \Lambda_d^N[\overline{x},\overline{x}+\frac{1}{2}]}(\mathcal{L}^N_k(u)+u^2/2)\geq \hat{r}_k\bigg\}.
\]
The goal is to show that for suitable $\hat{R}_k(\varepsilon)$ and $N$ large enough, we have
\begin{equation}\label{equ:3.goal}
\mathbb{P}\left(\mathsf{E}^N_k(\hat{R}_k (\varepsilon) )\right)<8\varepsilon.
\end{equation}
Then Proposition~\ref{pro:2} follows easily from \eqref{equ:3.goal}. 

Set
\[
\chi^N(\hat{r}_k)=\inf\Big\{u\in \Lambda_d^N[\overline{x},\overline{x}+\frac{1}{2}]:(\mathcal{L}^N_k(u)+u^2/2)\geq \hat{r}_k\Big\},
\]
with the convention that if the infimum is not attained then $\chi(\hat{r}_k)=\overline{x}+1/2$. We will generally shorten $\chi^N(\hat{r}_k)$ by just writing $\chi$. 

Let us further define events (which we will show to be typical)
\begin{align*}
\mathsf{Q}^N_{k-2}(M)&=\bigg\{\inf_{u\in \Lambda_d^N[\overline{x},\overline{x}+2]}(\mathcal{L}^N_{k-2}(u)+u^2/2)\geq -M\bigg\},\\
\mathsf{A}^N_{k-1,k}(r_k)&=\Big\{\mathcal{L}^N_{k-1}(\chi)+\chi^2/2\geq -r_k\Big \}\cap\Big\{\mathcal{L}^N_{j}(\overline{x}+2)+(\overline{x}+2)^2/2\geq -r_k\ \textrm{for}\ j=k,k-1\Big\}.
\end{align*}
Lastly, define the event (which will be shown to be atypical)
\begin{equation*}
\mathsf{B}^N_{k-1}(\hat{r}_{k-1})=\bigg\{\sup_{u\in \Lambda_d^N[\chi,\overline{x}+2]}(\mathcal{L}^N_{k-1}(u)+u^2/2)\geq \hat{r}_{k-1}\bigg\}.
\end{equation*}
For the above $N$-dependent events, we will typically drop the $N$ superscript. For instance, we will denote $\mathsf{E}^N_k(\hat{r}_k)$ simply by $\mathsf{E}_k(\hat{r}_k)$.\\

We now discuss the first set of requirements on the parameters. Assuming
\begin{align}\label{equ:ASS1}
  M\geq R_{k-2}(\varepsilon),  r_k\geq R_{k-1}(\varepsilon)+R_{k}(\varepsilon),\ \textup{and}\ \hat{r}_{k-1}\geq R_{k-1}(\varepsilon), 
\end{align}
from Proposition~\ref{pro:1} and Proposition~\ref{pro:2}, it holds that that
\begin{align}\label{equ:666}
\mathbb{P}(\mathsf{Q}_{k-2}(M))\geq 1-\varepsilon,\ \mathbb{P}(\mathsf{A}_{k-1,k-2}(r_k))\geq 1-3\varepsilon,\ \textup{and}\ \mathbb{P}(\mathsf{B}_{k-1}(\hat{r}_{k-1}))<\varepsilon, 
\end{align}  
provided $N$ is large enough depending on $k$, $x_0$ and $\varepsilon$. \\

Observe that the interval $[\chi,\overline{x}+2]$ forms a $\{k-1,k\}$-stopping domain for $\mathcal{L}^N$.  Observe also that the events $\mathsf{E}_k(\hat{r}_k),\mathsf{Q}^N_{k-2}(M)$ and $\mathsf{A}^N_{k-1,k}(r_k)$ are all $\mathcal{F}_{\textrm{ext}}\big(\{k-1,k\},\Lambda_d^N(\chi,\overline{x}+2)\big)$-measurable. This implies $\mathbb{P}\big(\mathsf{E}_k(\hat{r}_k)\cap \mathsf{Q}^N_{k-2}(M)\cap \mathsf{A}^N_{k-1,k}(r_k))\cap\mathsf{B}_{k-1}(\hat{r}_{k-1})\big)$ equals
\begin{align*}
\mathbb{E}\bigg[\mathbbm{1}_{ \mathsf{E}_k(\hat{r}_k)\cap \mathsf{Q}^N_{k-2}(M)\cap \mathsf{A}^N_{k-1,k}(r_k) }  \mathbb{E}\bigg[\mathsf{B}_{k-1}(\hat{r}_{k-1})\Big \vert\mathcal{F}_{\textrm{ext}}\big(\{k-1,k\},\Lambda_d^N(\chi,\overline{x}+2)\big) \bigg] \bigg].
\end{align*}
By the strong Gibbs property, we have that $\mathbb{P}$-almost surely:
\begin{align*}
	&\mathbb{E}\bigg[\mathsf{B}_{k-1}(\hat{r}_{k-1})\Big \vert\mathcal{F}_{\textrm{ext}}\big(\{k-1,k\},\Lambda_d^N(\chi,\overline{x}+2)\big) \bigg] = \mathbb{P}_{\Hamd^N,\HrwN}^{k-1,k,\Lambda_d^N(\chi,\overline{x}+2),\vec{x} ,\vec{y},f,g}(\mathsf{B}_{k-1}(\hat{r}_{k-1})),
\end{align*}
where $\vec{x}=(\mathcal{L}^N_{k-1}(\chi),\mathcal{L}^N_{k}(\chi))$, $\vec{y}=(\mathcal{L}^N_{k-1}(\overline{x}+2),\mathcal{L}^N_{k}(\overline{x}+2))$, $f(u)=\mathcal{L}^N_{k-2}(u)$ and $g(u)=\mathcal{L}^N_{k+1}(u)$.

We now use the stochastic monotonicity to simplify the boundary condition.	Given that the event $\mathsf{E}_k(\hat{r}_k)\cap \mathsf{Q}_{k-2}(M) \cap \mathsf{A}_{k-1,k}(r_k)$ occurs, it follows that
	\begin{align*}
		\mathcal{L}^N_{k-1}(\chi)&\geq -r_k-\chi^2/2\\
		\mathcal{L}^N_{k}(\chi)&\geq \hat{r}_k-\chi^2/2\\
		\mathcal{L}^N_{k-1}(\overline{x}+2)&\geq -r_k-(\overline{x}+2)^2/2\\
		\mathcal{L}^N_{k}(\overline{x}+2)&\geq -r_k-(\overline{x}+2)^2/2\\
		\mathcal{L}^N_{k-2}(u)&\geq M-u^2/2 \quad \textrm{for all}\ u\in \Lambda_d^N[\chi,\overline{x}+2].
	\end{align*}
Set
\begin{equation*} 
\begin{split}
\vec{x}_*=(-r_k-\chi^2/2,\hat{r}_k-\chi^2/2)&,\ \vec{y}_*=(-r_k-(\overline{x}+2)^2/2,-r_k-(\overline{x}+2)^2/2)\\
f_*(u)=M-u^2/2&,\ g_*(u)\equiv -\infty.
\end{split}
\end{equation*}
By the stochastic monotonicity,
\begin{equation}\label{eq:proof_3}
\begin{split}
&\mathbbm{1}_{ \mathsf{E}_k(\hat{r}_k)\cap \mathsf{Q}^N_{k-2}(M)\cap \mathsf{A}^N_{k-1,k}(r_k) }  \mathbb{P}_{\Hamd^N,\HrwN}^{k-1,k,\Lambda_d^N(\chi,\overline{x}+2),\vec{x} ,\vec{y},f,g}(\mathsf{B}_{k-1}(\hat{r}_{k-1}))\\
 \geq &\mathbbm{1}_{ \mathsf{E}_k(\hat{r}_k)\cap \mathsf{Q}^N_{k-2}(M)\cap \mathsf{A}^N_{k-1,k}(r_k) }  p(M,r_k,\hat{r}_{k-1},\hat{r}_k),
\end{split}
\end{equation}
where $p(M,r_k,\hat{r}_{k-1},\hat{r}_k)$ is a shorthand for $\mathbb{P}_{\Hamd^N,\HrwN}^{k-1,k,\Lambda_d^N(\chi,\overline{x}+2),\vec{x}_* ,\vec{y}_*,f_*,g_*}(\mathsf{B}_{k-1}(\hat{r}_{k-1}))$.\\ 

We would like to set parameters such that $p(M,r_k,\hat{r}_{k-1},\hat{r}_k)$ has a lower bound. This is achieved in Lemma~\ref{lem:key} below.
\begin{lemma}\label{lem:key}
Suppose $\Hamd^N$ and $\HrwN$ satisfy assumptions A3 and A4 respectively. Then there exist positive numbers $\delta>0$, $R^0>0$, and functions $K^0:\mathbb{R}\to\mathbb{R}_+ $, and $\hat{R}^0:\mathbb{R}^2 \to\mathbb{R}_+ $ and $N_0:\mathbb{R}^3 \to\mathbb{N} $ such that the following holds. Given the data
\begin{align*}
R\geq R^0,\  K\geq  K^0(R), \hat{R}\geq \hat{R}^0(R,K), N\geq    N_0(R,K,\hat{R}),
\end{align*}
and
\begin{align*}
\overline{x}\in \mathbb{R},  \chi\in   \Lambda_d^N[\overline{x},\overline{x}+1/2], 
\end{align*}    
it holds that
\begin{equation*}
	     \mathbb{P}_{N}^{1,2} \left(\sup_{u\in \Lambda_d^N[\chi,\overline{x}+2]}(\bar{S}_1(u)+u^2/2)\geq 2^{-1}\delta\hat{R}\right)\geq 4^{-1},
	\end{equation*}
where $\mathbb{P}_{N}^{1,2}$ is a shorthand for the measure below on the curves $\bar{S}_1$ and $\bar{S}_2$,
\[
\mathbb{P}_{\Hamd^N,\HrwN}^{1,2,\Lambda_d^N(\chi,\overline{x}+2),(-R-\chi^2/2,\hat{R}-\chi^2/2),(-R-(\overline{x}+2)^2/2,-R-(\overline{x}+2)^2/2),-K-x^2/2,-\infty}.
\]
\end{lemma}
We postpone the proof of Lemma~\ref{lem:key} to the end of this section. Suppose that
\begin{align}\label{equ:ASS2}
 r_k\geq R^0,\ M\geq K^0(r_k), \hat{r}_k\geq\hat{R}^0(r_k,M),\  2^{-1}\delta \hat{r}_k\geq\hat{r}_{k-1} .
\end{align} 
Then we have $p(M,r_k,\hat{r}_{k-1},\hat{r}_k) \geq 4^{-1}$. By taking the expectation in (\ref{eq:proof_3}), we obtain
  \begin{equation}\label{equ:444}
	 \mathbb{P}(\mathsf{E}_k(\hat{r}_k)\cap \mathsf{Q}_{k-2}(M) \cap \mathsf{A}_{k-1,k}(r_k))\leq 4\mathbb{P}(\mathsf{B}_{k-1}(\hat{r}_{k-1})).   
  \end{equation}
Combining \eqref{equ:666} and \eqref{equ:444}, we obtain
	\begin{align*}
		\mathbb{P}(\mathsf{E}_k(\hat{r}_k))&\leq \mathbb{P}(\mathsf{E}_k(\hat{r}_k)\cap \mathsf{Q}_{k-2}(M) \cap \mathsf{A}_{k-1,k}(r_k))+\mathbb{P}(( \mathsf{Q}_{k-2}(M))^{\neg})+\mathbb{P}(  (\mathsf{A}_{k-1,k}(r_n))^{\neg})\\
		&\leq 4\mathbb{P}(\mathsf{B}_{k-1}(\hat{r}_{k-1})) +\mathbb{P}(( \mathsf{Q}_{k-2}(M))^{\neg})+\mathbb{P}(  (\mathsf{A}_{k-1,k}(r_n))^{\neg})< 8\varepsilon.
	\end{align*}
	 
We are ready to determine the value of $\hat{R}_{k}(\varepsilon)$ and prove \eqref{equ:3.goal}. Let $\hat{R}_{k}(\varepsilon)$ be the minimum number which satisfy the following condition. There exists $(M,r_k,\hat{r}_{k-1},\hat{r}_k)$ such that $\hat{r}_k=\hat{R}_{k}(\varepsilon)$ and \eqref{equ:ASS1} and \eqref{equ:ASS2} are satisfied. This finishes the derivation of \eqref{equ:3.goal}.\\

In the rest of the section, we prove Lemma~\ref{lem:key}. The analogue of Lemma~\ref{lem:key} for Brownian bridge measure has been proved in \cite[Proposition 7.6]{CH16}. Our strategy is to use Assumption A4 to compare random walk bridges and Brownian bridges, hence finishing the proof.
\begin{proof}[Proof of Lemma \ref{lem:key}]
Recall that $\Lambda^N_d= d_N\mathbb{Z}$ for some $d_N$ decreasing to $0$. For simplicity we take $d_N=N^{-1}$ in the prove. For any triple $(R,K,\hat{R})$, we consider
\begin{equation*} 
\begin{split}
\vec{x}_0=(-R-\chi^2/2,\hat{R}_k-\chi^2/2)&,\ \vec{y}_0=(-R-(\overline{x}+2)^2/2,-R-(\overline{x}+2)^2/2)\\
f_0(u)=K-u^2/2&,\ g_0(u)\equiv -\infty.
\end{split}
\end{equation*} 	 
To simplify the notation we denote the measures:
	$$\overline{\mathbb{P}}_N:=\mathbb{P}_{\Hamd^N,\HrwN}^{1,2,\Lambda_d^N(\chi,\overline{x}+2),\vec{x}_0 ,\vec{y}_0 ,f_0,g_0},\   \mathbb{P}_N:=\mathbb{P}_{\free,\HrwN}^{1,2,\Lambda_d^N(\chi,\overline{x}+2),\vec{x}_0,\vec{y}_0},$$
and
$$\overline{\mathbb{P}}_\infty=\mathbb{P}_{\Ham}^{1,2,(\chi,\overline{x}+2),\vec{x}_0 ,\vec{y},f_0,g_0},\   \mathbb{P}_\infty= \mathbb{P}_{\free}^{1,2,(\chi,\overline{x}+2),\vec{x}_0,\vec{y}_0}.$$  	 
See Definitions~\ref{def:H_Brownian} and \ref{def:RWB_Gibbs_ensemble} for more details. We write $\overline{\mathbb{E}}_N,  \mathbb{E}_N,\overline{\mathbb{E}}_\infty,  \mathbb{E}_\infty$ for the corresponding expectations. We use $\bar{S}^N= (\bar{S}^N_1,\bar{S}^N_2)$ to denote curves with laws $\mathbb{P}_N$ or $\bar{\mathbb{P}}_N$. Similarly, we use $B=(B_1,B_2)$ to denote curves with laws $\mathbb{P}_\infty$ or $\bar{\mathbb{P}}_\infty$. Furthermore, we simplify the Boltzmann weights as
\begin{align*}
W_N(\bar{S}^N )&=\exp\Big[-\sum_{k=0}^{2}\sum_{u\in \Lambda_d^N[\chi,\overline{x}+2]} \Hamd^N[\rectangle(\bar{S}^N,k,u)]\Big],\\
W_\infty(B)&=\exp\Big[-\sum_{i=0}^{2}\int_{\chi}^{\overline{x}+2} du \exp[B_{i+1}(u)-B_{i}(u)] \Big],
\end{align*}
 with the convention that the index $0$ curve is $K-u^2/2$ and the index $3$ curve is identical to $-\infty$.\\

From \cite[Proposition 7.6 ]{CH16}, there exists $\delta>0$, $R^0>0$, and functions $K^0:\mathbb{R}\to\mathbb{R}_+ $, and $\hat{R}^0:\mathbb{R}^2 \to\mathbb{R}_+ $ such that the following is true. For all
  \begin{align}\label{equ:ASS3}
 R\geq R^0,\ K \geq K^0(R),\ \hat{R}\geq \hat{R}^0(R,K)  
\end{align}   
and all
$\overline{x}\in \mathbb{R}$ and $\chi\in[\overline{x},\overline{x}+1/2]$, it holds that 		
\begin{equation}\label{equ:kmu}
 		\overline{\mathbb{P}}_\infty\left(\sup_{u\in[\chi,\overline{x}+2]}(B_1(u)+u^2/2)\geq 2^{-1} \delta\hat{R}\right) \geq  2^{-1}.
\end{equation}
We adapt the same numbers $\delta, R^0$ and functions $K^0, \hat{R}^0$. We also fix $(R,K,\hat{R})$ such that \eqref{equ:ASS3} is satisfied. We aim to show that the same event, under the measure $\mathbb{Q}$, occurs with probability at least $4^{-1}$.\\

Consider the event 
\begin{align*}
\mathsf{I}^N=&\left\{\sup_{u\in\Lambda_d^N[\chi,\overline{x}+2]}(\bar{S}_1(u)+u^2/2)\geq 2^{-1} \delta\hat{R}\right\}.
\end{align*}
Our goal is to show that
\begin{align}\label{equ:4.goal}
\liminf_{N\to\infty}\overline{\mathbb{P}}_N(\mathsf{I}^N)\geq 2^{-1}.
\end{align}
Once we have \eqref{equ:4.goal}, we know  $\overline{\mathbb{P}}_N(\mathsf{I}^N)\geq 4^{-1}$ for $N$ large enough. This finishes the proof of Lemma~\ref{lem:key}. The rest of the proof is devoted to show \eqref{equ:4.goal} \\

From the definition of $\overline{\mathbb{P}}_N$, we have
\begin{align*}
\overline{\mathbb{P}}_N \left(\mathsf{I}^N  \right)=&\mathbb{E}_N[W_N]^{-1}\mathbb{E}_N[\mathbbm{1}_{\mathsf{I}^N} W_N]
\end{align*}
To prove \eqref{equ:4.goal}, it amounts to find a lower bound for $
Y_N':=  \mathbb{E}_N  \left(\mathbb{1}_{\mathsf{I}^N  }W_N\right), $
and an upper  bound for $Y_N:=\mathbb{E}_N  \left(W_N\right).$ Those bounds are obtained through a strong coupling. 

From Assumption A4, we can couple the measures $\mathbb{P}_N $ and $\mathbb{P}_\infty$ in a probability space $(\Omega_N,\mathbb{Q}_N)$. Moreover, these exits  $a>0$ such that for $i=1,2$ and $b_N=aN^{-1/2}\log N$,
\begin{equation}\label{equ:4.couple}
\lim_{N\to\infty} \mathbb{Q}_N\left(\sup_{u\in[\chi ,\overline{x}+2],\ i=1,2}\vert \bar{S}^N_i(u)-B_i(u)\vert \geq b_N \right)=0.
\end{equation}

Let us start with the lower bound for $Y'_N$. Let $\mathsf{D}_N	$ be the event:
	\begin{equation*}
\mathsf{D}_N:=\left\{\sup_{u\in[\chi,\overline{x}+2]}\vert \bar{S}^N_i(u)-B_i(u)\vert \leq b_N ,\ \textup{for}\ i=1,2\right\}.
	\end{equation*}
It is clear from \eqref{equ:4.couple} that
	$$\lim_{N\to\infty} \mathbb{Q}_N(\mathsf{D}_N)=0.$$
Under the Assumption A3, there exists a constant $C$ such that
	\begin{align*}
		W_N(\bar{S}^N )&\geq W_\infty(\bar{S}^N) \cdot  \exp \left(-C  \omega_{(\chi,\overline{x}+2)}(\bar{S}^N ,1/N) \right)
	\end{align*}
Define the event
 \begin{align*}
 \mathsf{J}^N= &\left \{\sup_{u\in[\chi,\overline{x}+2]}(B_1(u)+u^2/2)\geq 2^{-1}\delta\hat{R}+ b_N  \right\}
 \end{align*}	
Assume that the event $\mathsf{D}$ occurs	, we obtain
	\begin{align*}
\omega_{(\chi,\overline{x}+2)}(\bar{S}^N ,1/N)&\leq \omega_{(\chi,\overline{x}+2)} (B^N,1/N)+2b_N, \\
-\log W_\infty(\bar{S}^N ) &\leq - e^{2b_N} \log W_\infty(B  	).
	\end{align*}
Together with $\mathsf{J}^N\cap\mathsf{D}\subset \mathsf{I}^N$, we have
\begin{align*}
	Y_N' \geq  &  \mathbb{E}_\infty \left[\mathbb{1}_{\mathsf{J}^N}\exp\left( e^{2b_N}\log W_\infty(B) 
	-C\omega_{(\chi,\overline{x}+2)} (B ,1/N) -4Cb_N   \right)  \right]-\mathbb{Q}_N( \mathsf{D}_N^\neg).
\end{align*}
Fix an arbitrary $\varepsilon>0$. Let 
$$A'(N,\varepsilon)=  e^{-2C\varepsilon-4Cb_N}  \mathbb{P}_\infty \left(\left\{	\omega_{(\chi,\overline{x}+2)} (B,1/N)<\epsilon\right\}\right),\ C'(N)=\mathbb{Q}_N( \mathsf{D}_N^\neg)$$
Then the above inequality becomes:
	\begin{equation*}
		Y_N'\geq  A'(N,\epsilon)    \mathbb{E}_\infty \left[\mathbb{1}_{\mathsf{J}^N}\exp\left( e^{2b_N}\log W_\infty(B) 
	 \right)  \right]-C'(N). 
	\end{equation*}
Let $c'_N=e^{2b_N}>1$. By the convexity of function $x^{\alpha}$ for $\alpha >1$, Jensen's inequality implies 
\begin{equation}\label{eq:boundY'}
		Y_N'\geq  A'(N,\epsilon)\mathbb{E}_\infty \left[\mathbb{1}_{\mathsf{J}^N }W_\infty \right]^{c'_N}-C(N).
\end{equation}

\vspace*{0.3cm}

Next, we turn to establishing an upper bound for $Y_N$. Since $W_N$ is bounded by $1$, 
	\begin{equation}\label{eq:bound_Y}
	Y_N\leq   \mathbb{E}_N\left[\mathbbm{1}_{\mathsf{D}} W_N  \right]+ \mathbb{Q}_N(\mathsf{D}_N^\neg).
	\end{equation}
When $\mathsf{D}$ occurs, it holds that
\begin{align*}
	\omega_{\chi,\overline{x}+2}(\bar{S}^N ,1/n)&\geq \omega_{\chi,\overline{x}+2} (B ,1/N)-2b_N\\
-\log W_\infty(\bar{S}^N) &\geq - e^{-2b_N} \log W_\infty(B).
	\end{align*}
Arguing similarly as above, we can get the following upper bound
	\begin{align*}
 \mathbb{E}_N\left[\mathbbm{1}_{\mathsf{D}} W_N  \right] &\leq  \mathbb{P}_\infty\left(\{\omega_{(\chi,\overline{x}+2)} (B,1/N)>\epsilon\}\right)  + e^{2C\varepsilon+4Cb_N}  \mathbb{P}_\infty \left[\exp\left( e^{-2b_N} \log W_\infty \right)\right].
\end{align*}
Let $c_N=e^{-2b_N}<1$. By the concavity of $x^\alpha$ for $0<\alpha<1$, we can deduce that
\begin{equation}\label{eq:bound_W}
 \mathbb{E}_N\left[\mathbbm{1}_{\mathsf{D}} W_N  \right] \leq \mathbb{P}_\infty\left(\{\omega_{(\chi,\overline{x}+2)} (B,1/N)>\epsilon\}\right)+e^{2C\varepsilon+4Cb_N}    \mathbb{E}_\infty \left[W_\infty \right]^{c_N}. 
	\end{equation}
Combining  (\ref{eq:bound_Y}) and (\ref{eq:bound_W}), we obtain:
\begin{equation}\label{eq:boundY1}
	Y_N\leq A(N,\epsilon)  \mathbb{E}_\infty \left[W_\infty \right]^{c_N}+C(N,\epsilon ,
	\end{equation}
		where
		\begin{equation*}
		A(N,\epsilon)= e^{2C\varepsilon+4Cb_N},\ C(N,\epsilon)=\mathbb{Q}_N (\mathsf{D}_N^{\neg})+ \mathbb{P}_\infty\left(\{\omega_{(\chi,\overline{x}+2)} (B,1/N)>\epsilon\}\right).
		\end{equation*}\\
		
From \eqref{eq:boundY'} and \eqref{eq:boundY1}, we obtain to get
	\begin{equation*}
\overline{\mathbb{P}}_N[\mathsf{I}^N ]= \frac{Y'_N}{Y_N}\geq \frac{A'(N,\epsilon)\mathbb{E}_\infty \left[\mathbb{1}_{\mathsf{J}^N }W_\infty \right]^{c'_N}-C'(N)  }{A(N,\epsilon)  \mathbb{E}_\infty \left[W_\infty \right]^{c_N} + C(N,\epsilon)}.
	\end{equation*}
When $N$ goes to infinity,
\begin{align*}
&\lim_{N\to\infty} A(N,\varepsilon)=e^{2C\varepsilon},\ \lim_{N\to\infty} A'(N,\varepsilon)=e^{-2C\varepsilon},\\
&\lim_{N\to\infty} C(N,\varepsilon)=\lim_{N\to\infty} C'(N )=0,\ \lim_{N\to\infty} c_N=\lim_{N\to\infty} c'_N=1. 
\end{align*}
As a result, 
\begin{align*}
\liminf_{N\to\infty}\overline{\mathbb{P}}_N(\mathsf{I}^N)\geq e^{-4C\varepsilon} \lim_{N\to\infty}\overline{\mathbb{P}}_\infty(\mathsf{J}^N) \geq 2^{-1}e^{-4C\varepsilon}.
\end{align*}
We have used \eqref{equ:kmu} in the second inequality. Because $\varepsilon>0$ is arbitrary, we conclude that
$$\liminf_{N\to\infty}\overline{\mathbb{P}}_N(\mathsf{I}^N)\geq 2^{-1},$$
which finishes the proof.
\end{proof}

\section{Application for the log-gamma directed polymers}\label{sec:log_gamma}
In this section, we discuss an application of  Theorem~\ref{theorem:main} to the log-gamma line ensembles which are assocaited with the log-gamma directed polymers, as introduced in \cite{Sep}. We aim to show that under the same scaling, the whole scaled line ensemble is tight. Moreover, any subsequential limit enjoys the $\mathbf{H}$-Brownian Gibbs property with $\Ham(x)=e^x$. This is the content of Theorem~\ref{thm:loggamma}. 

This section is organized as follows. In Subsection~\ref{sec:5.1}, we give the definition of the log-gamma line ensembles and show that they enjoy $(\Hamd,\Hrw)$-Gibbs property with suitable $(\Hamd,\Hrw)$. In Subsection~\ref{sec:5.2}, we introduce the weak noise scaling and check that the scaled log-gamma line ensembles satisfy the assumptions in Theorem~\ref{theorem:main}. The convergence of the the lowest indexed curve is essentially proved in \cite{AKQ}. See the discussion after Proposition~\ref{pro:AKQ} for more details. We then proceed to verify the assumptions A1-A4 and prove Theorem~\ref{thm:loggamma}.

\subsection{Log-gamma line ensemble}\label{sec:5.1} 
In this subsection, we introduce the discrete log-gamma line ensembles and discuss some of their properties. We give the definition of the inverse gamma distribution.

\begin{definition}
Fix $\gamma>0$ and let $\Gamma(\gamma)$ be the Gamma function. A random variable $X$ has inverse gamma distribution with shape parameter $\gamma$ if its probability density function is given by
\begin{equation}\label{densityinvgamma}
 \Gamma(\gamma)^{-1} x^{-\gamma-1}e^{-x^{-1}}\mathbbm{1}_{\{x>0\}}.
\end{equation}
We abbreviate with $X \overset{(d)}{=} \textup{Inv-Gamma} (\gamma)$.
\end{definition}
It could be directly computed that for any $k\in\mathbb{N}$ with $\gamma>k$,
\begin{align}\label{def:gamma}
\mathbb{E}[\textup{Inv-Gamma} (\gamma)^k]=\frac{\Gamma(\gamma-k)}{\Gamma(\gamma)}=\frac{1}{(\gamma-1)(\gamma-2)\cdots (\gamma-k)}.
\end{align}

Next, we define the random environment for the log-gamma directed polymer model. Let $K\in\mathbb{N}$ be an integer. The value of $K$ will be fixed throughout this subsection.	 Consider a semi-infinite matrix $d=(d_{ij}:i\in\mathbb{N} ,j\in [1,K]_{\mathbb{Z}} )$ of i.i.d random variables with distribution: 
 \begin{equation}
 d_{ij} \overset{(d)}{=} \textup{Inv-Gamma} (\gamma).
 \end{equation}
For any $n\in\mathbb{N} $, we denote by $d^{n,K}$ the $n\times K$ matrix $(d_{ij}:i\in [1,n]_{\mathbb{Z}} , j\in [1,K ]_{\mathbb{Z}})$. We call $d_{ij}$ the weight at the location $(i,j)$.\\

We present the geometric RSK correspondence which constructs an array (see Definition~	\ref{def:array}) from a $n\times K$ matrices with positive entries. See \cite{COSZ} for more discussions and properties for the geometric RSK correspondence. Then we will focus on the shape of the array (See Definition~\ref{def:shape}).

 For $l,k\in\mathbb{N}$ with $1\leq l\leq k\leq K$, let $\Pi^n_{k,l}$ denote the collection of $l$-tuples $\pi=(\pi_1,\cdots,\pi_l)$ of non-intersecting lattice paths in $\mathbb{Z}^2$ such that for $1\leq r\leq l$, $\pi_r$ is a lattice up-right path from $(1,r)$ to $(n,k+r-l)$. See the left figure in Table~\ref{paths} below for an illustration. Notice that $n<l$ implies $\Pi^n_{k,l}$ is empty or contains a single element. If $n < l < k$, then $\Pi^n_{k,l}$ is empty.  If $n<l = k$, there is a unique $l$-tuple in $\Pi^n_{k,l}$  in which all paths are horizontal. On the other hand, it is easy to see that $\Pi^n_{l,k}$ is non-empty for $l\leq  n \wedge k$.
\begin{definition}
For any $l$-tuples $\pi=(\pi_1,\cdots,\pi_l)$, the weight of $\pi$ is given by
	\begin{equation*}
		wt(\pi) :=\prod_{r=1}^l\prod_{(i,j)\in\pi_r}d_{ij}.
	\end{equation*}
For $l,k\in\mathbb{N}$ with $1\leq l\leq k \leq K$, define
 \begin{equation}\label{def:tau}
 \tau_{k,l}(n) :=\sum_{\pi\in \Pi^n_{k,l}} wt(\pi).
 \end{equation}
If $\Pi^n_{k,l}$ is empty, we set $\tau_{k,l}(n)=0$.
\end{definition}
 From $\tau_{k,l}(n)$, we can define the array $z(n)$ and the shape $y_K(n)$.
\begin{definition}\label{def:array}
The array $z(n)=\{z_{k,l}(n):1\leq k\leq K, 1\leq l\leq k \}$ is defined by
\begin{align}\label{def:z}
z_{k,l}(n):= \left\{ \begin{array}{cc} {\displaystyle \frac{\tau_{k,l}(n)}{\tau_{k,l-1}(n)}}, & l\leq n \wedge k ,\\
&\\
\textup{undefined}, & n < l \leq k.
\end{array}\right.
\end{align}
Here we adopt the convention that $\tau_{k,0}(n)=1$.
\end{definition}
\begin{definition}\label{def:shape}
The shape $y_K(n) :=(y_{K,1}(n),\cdots,y_{K,K}(n))$ of the array $z(n)$ is defined by
	\begin{align}\label{def:y}
		y_{K,i}(n):= \left\{ \begin{array}{cc} z_{K,i}(n), & i \leq n \wedge K,\\
		&\\
		\textup{undefined}, & n < i \leq K.
		\end{array}\right.
	\end{align}
\end{definition}
We are ready to define the discrete log-gamma line ensemble.
\begin{definition}\label{log-gamma line ensemble}
The log-gamma line ensemble is defined by	
\begin{align*}
\mathcal{L}_{K,i}(n) :=\left\{\begin{array}{cc}
\log (y_i(n)) & 1\leq i\leq n\wedge K,\\
&\\
\textup{undefined} &  n<i\leq K.
\end{array}\right. 
\end{align*}
\end{definition}
Let us discuss the space of indices on which log-gamma line ensemble is defined. If $n\geq K$, then $\mathcal{L}_{K,i}(n)$ is defined for all $i\in [1,K]_{\mathbb{Z}}$. On the other hand, if $n\in [1,K-1]_{\mathbb{Z}}$, then $\mathcal{L}_{K,i}(n)$ is only defined for all $i\in [1,n]_{\mathbb{Z}}$. See Figure~\ref{5lines} for an illustration. We summarize the the construction of log-gamma line ensemble in the following Table~\ref{paths}.	\\

The rest of this subsection is devoted to prove the $(\Hamd,\Hrw)$-Gibbs property for log-gamma line ensembles, Proposition~\ref{Gibbs_pro}. 

We begin with the Markov property of $y_K(n)$. For brevity, we will denote $y_K(n)$ simply by $y(n)$. It is shown in \cite[Theorem 3.7 and Theorem 3.9]{COSZ} that the process $\{y(n),n\geq K\}$ is Markovian in $n$. Furthermore, there is a function $w:(0,\infty)^K\to (0,\infty)$ such that the transition probability from $y=(y_1,\dots,y_K)$ to $\tilde{y}=(\tilde{y}_1,\dots,\tilde{y}_K)$ is given by: 
\begin{equation}\label{def:transition_kernel}
\frac{w(\tilde{y})}{w(y)}\prod_{i=1}^{K-1}  e^{-\tilde{y}_{i+1}/y_i} \prod_{j=1}^K\bigg(\Gamma(\gamma)^{-1} (y_j/\tilde{y}_j)^\gamma e^{-y_j/\tilde{y}_j} \bigg)\frac{d\tilde{y}_j}{\tilde{y}_j}.
\end{equation}
The explicit form of $w$ can be found in \cite[(3.9)]{COSZ} but we will not rely on such form. The following Lemma~\ref{Gibbs_pro_pre} shows that an $(\Hamd,\Hrw)$-Gibbs property can be derived from the Markov property \eqref{def:transition_kernel}.

\begin{table}[h!]
     \begin{center}
     \begin{tabular}{c p{8cm}}
     \hline
      Paths in environment & \quad  Definition of log-gamma line ensemble $\mathcal{L}_K$   \\ 
    \cmidrule(r){1-2}
     \raisebox{-\totalheight}{\includegraphics[width=8cm, height=5.5cm]{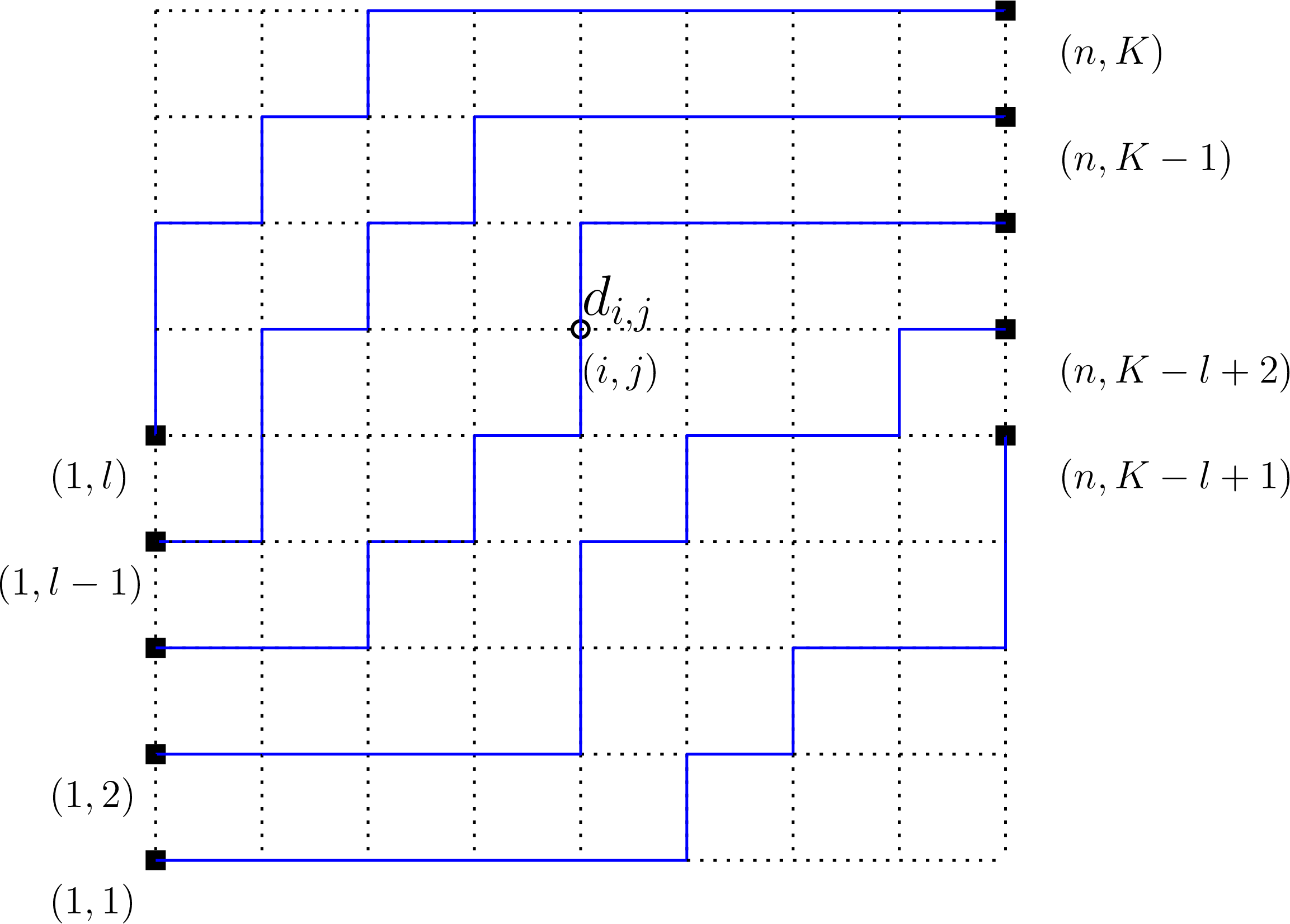}}
      &  Define partition functions as
      \begin{itemize}
      \item  $\displaystyle{z_{K,l}(n) :=\sum_{\pi\in \Pi^n_{K,l}} wt(\pi)},$
      \item  $\displaystyle{wt(\pi) :=\prod_{r=1}^l\prod_{(i,j)\in\pi_r}d_{ij}}$.
      \end{itemize}
      \medskip     
      Define the {\em log-gamma line ensemble} $\mathcal{L}_K$ as
      \begin{itemize}
      \item $\displaystyle{\mathcal{L}_{K,l}(n) := \log{z_{K,l}(n)}- \log{z_{K,l-1}(n)}}, \ l\leq n$
      \item undefined for $l >n$.
\end{itemize}       
      \\
      \hline
      \end{tabular}
      \caption{Summary of the process of constructing $\mathcal{L}(n), n\geq 1$. We regard $n$ as the evolving coordinate. We take the empty sum to equal zero in the definition of $z_{K,l}(n)$ when $n<l$.}
      \label{paths}
      \end{center}
      \end{table}

\begin{figure}
\includegraphics[height = 5cm]{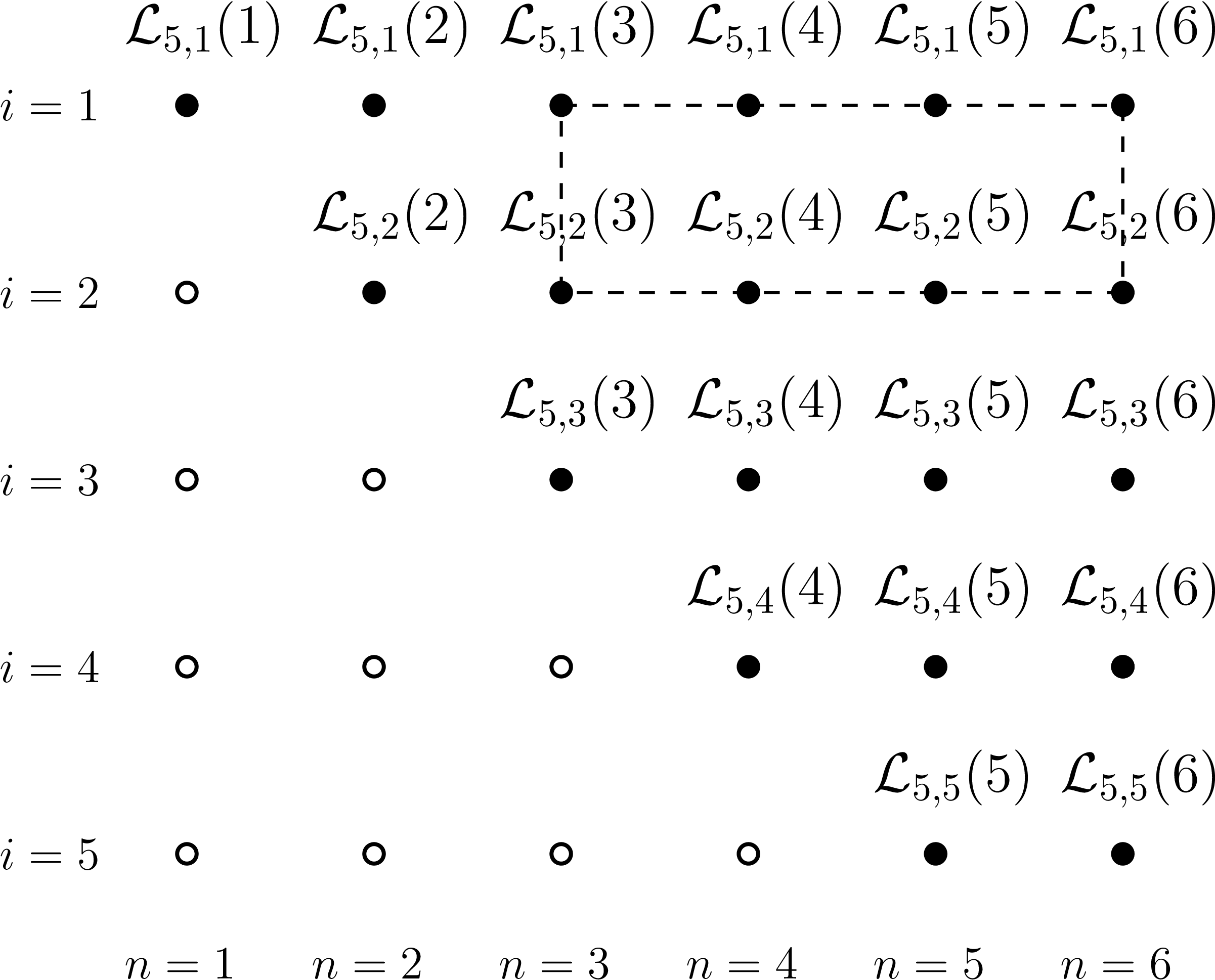}
\caption{Illustration of the log-gamma line ensemble construction with $K=5$. There are five curves $\cL_{5,1}(n)$ through $\cL_{5,5}(n)$. The values of $\cL_{5,i}(n)$ are not defined on the hollow points. The Gibbs property holds for $n \geq 5$ as a consequence of Lemma~\ref{Gibbs_pro_pre} and the Markov property for $\cL_5(n)$ when $n \geq 5$ with transition kernel from \eqref{def:transition_kernel}. We are in need of the same Gibbs property when $n <5$, e.g. the dashed rectangle region.}
\label{5lines}
\end{figure}

\begin{lemma}\label{Gibbs_pro_pre}
Fix any $\hat{y}^0\in (0,\infty)^K $. Let $\hat{y}(n), n\in\mathbb{N}$ be the Markov chain on $(0,\infty)^K$ with the initial state $\hat{y}(0) = \hat{y}^0$ and the transition kernel given by \eqref{def:transition_kernel}. Define the $[1,K]_{\mathbb{N}}\times \mathbb{N}$-indexed line ensemble $\hat{\mathcal{L}}$ by
		\begin{equation*}
		\hat{\mathcal{L}}_{i}(n)= \log ( \hat{y}_i(n)).
		\end{equation*}
Then $\hat{\mathcal{L}}$ enjoys a $(\Hamd,\Hrw)$-Gibbs property with
		\begin{equation}\label{log-gamma_line_rw_pdf}
		\Hrw(x)=\log \Gamma(\gamma)+\gamma x+e^{-x},
		\end{equation}
		and
		\begin{equation}\label{log-gamma_line_hamil}
		\Hamd(\rectangle(\mathcal{L},k,u))=\exp\big(\mathcal{L}_{k+1}(u+1)-\mathcal{L}_k(u)\big).
		\end{equation}
	\end{lemma}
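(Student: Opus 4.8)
The plan is to reduce the claimed $(H,H^{RW})$-Gibbs property to a direct computation with the transition kernel \eqref{def:transition_kernel}. First I would observe that, since $\hat{\mathcal L}_{K,i}(n) = \log\hat y_i(n)$ is a strictly increasing change of variables applied coordinatewise, it suffices to establish the analogous resampling invariance for the Markov chain $\{\hat y(n)\}_{n\ge 1}$ itself on $\mathbb Y_K$, and then push it forward through the logarithm (the Jacobians $dy_j/y_j$ are exactly what turns into Lebesgue measure $d\mathcal L_j$ in the $H^{RW}$-random walk bridge density \eqref{def:RWB}, which is the first sanity check to record). Because $\{\hat y(n)\}$ is a time-homogeneous Markov chain, the conditional law of the block $\{\hat y(n): a< n< b\}$ given the $\sigma$-field $\mathcal F_{\textrm{ext}}$ generated by everything outside $[k_1,k_2]_{\Z}\times\Lambda_d(a,b)$ depends only on the boundary data $\hat y(a),\hat y(b)$ and on the curves $\hat y_{k_1-1}(\cdot),\hat y_{k_2+1}(\cdot)$ on $[a,b]$ — this is the elementary spatial Markov property of a Markov chain, and it is here that the restriction to $n\ge 1$ (where the chain is genuinely Markov) matters, as emphasised in the surrounding text. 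So the content is entirely in identifying the resulting conditional density.

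The key step is therefore to write out $\prod_{n=a}^{b-1} P(\hat y(n), d\hat y(n+1))$ from \eqref{def:transition_kernel}, restrict attention to the coordinates $i\in[k_1,k_2]_{\Z}$, and collect terms. Each factor $P(y,d\tilde y)$ contains: (i) the ratio $w(\tilde y)/w(y)$, which telescopes along the time direction and contributes only to the normalisation once one conditions on the endpoints; (ii) the single-curve factor $\Gamma(\gamma)^{-1}(y_j/\tilde y_j)^{\gamma}\exp(-y_j/\tilde y_j)\,d\tilde y_j/\tilde y_j$, which under $\mathcal L_j = \log y_j$ becomes $\exp(-[\log\Gamma(\gamma) + \gamma(\mathcal L_j(n+1)-\mathcal L_j(n)) + e^{-(\mathcal L_j(n+1)-\mathcal L_j(n))}])\,d\mathcal L_j(n+1)$, i.e. exactly $\exp(-H^{RW}(\mathcal L_j(n+1)-\mathcal L_j(n)))\,d\mathcal L_j(n+1)$ with $H^{RW}$ as in \eqref{log-gamma_line_rw_pdf} — this is the free random-walk-bridge piece; and (iii) the interaction factor $\prod_{i=1}^{K-1}\exp(-\tilde y_{i+1}/y_i)$, which under the logarithmic substitution is $\prod_i \exp(-\exp(\mathcal L_{i+1}(n+1) - \mathcal L_i(n)))$, matching $\exp(-H(\rectangle(\mathcal L,i,n)))$ with $H$ as in \eqref{log-gamma_line_hamil} since that $H$ depends only on the pair $(\mathcal L_{i+1}(u+1), \mathcal L_i(u))$. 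One then checks that, after fixing $\mathcal L_{k_1}(a),\dots,\mathcal L_{k_2}(a)$ and $\mathcal L_{k_1}(b),\dots,\mathcal L_{k_2}(b)$ as entrance/exit data and taking $\mathcal L_{k_1-1},\mathcal L_{k_2+1}$ (restricted to $\Lambda_d[a,b]$) as the boundary functions $f,g$, the product of all these factors over $n\in[a,b-1]$ and $i\in[k_1-1,k_2]$ is precisely the Radon–Nikodym derivative defining $\mathbb P_{H,H^{RW}}^{k_1,k_2,\Lambda_d[a,b],\vec x,\vec y,f,g}$ in Definition~\ref{def:RWB_Gibbs_ensemble}, once the $w$-ratios and all boundary-independent constants are absorbed into the normalising constant $Z_{H,H^{RW}}$. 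Finally one invokes the stated convention that $\hat{\mathcal L}_{k_1-1}=+\infty$ when $k_1=1$ (so the corresponding interaction term $\exp(-e^{-\infty})=1$ drops out) and $\hat{\mathcal L}_{k_2+1}=-\infty$ when $k_2=K$, which is consistent with the fact that \eqref{def:transition_kernel} has no $i=K$ interaction factor.

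The main obstacle, such as it is, is bookkeeping rather than conceptual: one must be careful that the boundary interaction terms in the Boltzmann weight \eqref{def:Boltzmann_randomwalk} — the $k=k_1-1$ term (involving $f$ and $\mathcal L_{k_1}$) and the $k=k_2$ term (involving $\mathcal L_{k_2}$ and $g$) — are correctly produced by the $i=k_1-1$ and $i=k_2$ factors of $\prod_{i=1}^{K-1}\exp(-\tilde y_{i+1}/y_i)$ in \eqref{def:transition_kernel}, and that the $w(\tilde y)/w(y)$ ratios, which are not manifestly boundary-data functions, genuinely telescope so that what survives depends only on $\hat y(a)$, $\hat y(b)$ and hence can be swept into $Z$. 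I would also note explicitly that no integrability issue arises because each transition density in \eqref{def:transition_kernel} is already a bona fide probability kernel, so the conditioned measure is automatically well-defined and the normalising constant is finite and positive. With these points checked, \eqref{eqn:Gibbs-property} holds for $\hat{\mathcal L}_K$, which is the assertion of the lemma.
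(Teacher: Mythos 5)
Your proposal is correct and takes essentially the same approach as the paper: condition on the exterior $\sigma$-field using the Markov property of $\{\hat y(n)\}$, expand the product of transition kernels from \eqref{def:transition_kernel}, and identify the resulting conditional density as $\mathbb{P}_{H,H^{RW}}^{k_1,k_2,\Lambda_d[a,b],\vec{x},\vec{y},f,g}$ after the logarithmic change of variables, with the $w$-ratios telescoping into the normalizing constant. Your account is somewhat more explicit than the paper's terse computation about which factor of \eqref{def:transition_kernel} produces each piece of the Boltzmann weight (the $h_\gamma$ factor giving $\exp(-H^{RW})$, the nearest-neighbour factor giving $\exp(-H)$, and the telescoping of $w$), but the underlying argument is the same.
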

\begin{proof}
We write $\hat{\mathcal{L}}(n)$ for the vector $(\hat{\mathcal{L}}_{ 1}(n),\dots,\hat{\mathcal{L}}_{ K}(n) )$. Because $\hat{y}(n)$ is Markov in $n$, $\hat{\mathcal{L}}_K(n)$ is Markov in $n$ as well. Moreover, from \eqref{def:transition_kernel}, there is a function $\alpha:\mathbb{R}^K\to (0,\infty)$ such that the transition kernel from $\hat{\mathcal{L}}(u)$ to $\hat{\mathcal{L}}(u+1)$ is given by 
\begin{equation}\label{transition_log}
\begin{split}
\frac{\alpha( \hat{\mathcal{L}} (u))}{\alpha( \hat{\mathcal{L}} (u+1))} &\prod_{i=1}^{K-1} \exp\Bigg[-\Hamd(\rectangle(\hat{\mathcal{L}}  ,i,u)) \Bigg] \\
\times&\prod_{j=1}^K \exp\bigg(-\Hrw(\hat{\mathcal{L}}_{ j}(u+1)-\hat{\mathcal{L}}_{ j}(u))\bigg){d\hat{\mathcal{L}}_{j}(u+1)},
\end{split}
\end{equation} 
where $\Hamd$ and $\Hrw$ are given by \eqref{log-gamma_line_hamil} and \eqref{log-gamma_line_rw_pdf} respectively.  The explicit form of $\alpha$ is not consequential for us. Fix $k_1\leq k_2$ with $k_1,k_2\in [1,K]_{\mathbb{Z}}$, and $a<b$ with $a,b\in\mathbb{N}$. We denote $\vec{x} = \big(\hat{\mathcal{L}}_{ k_1}(a),\ldots, \hat{\mathcal{L}}_{k_2}(a)\big)$ as the entrance data,  $\vec{y} = \big(\hat{\mathcal{L}}_{ k_1}(b),\ldots, \hat{\mathcal{L}}_{ k_2}(b)\big)$ as the exit data,  $f=\hat{\mathcal{L}}_{ k_1-1}|_{(a,b)}$ be the upper boundary curve and  $g=\hat{\mathcal{L}}_{ k_2+1}|_{(a,b)}$ be the lower boundary curve and we adopt the convention that $\hat{\mathcal{L}}_0 = +\infty$ and $\hat{\mathcal{L}}_{K+1} = -\infty$. 
		
Given a continuous function $F:C([k_1,k_2]_{\mathbb{Z}}\times [a,b]_{\mathbb{Z}},\mathbb{R})$, we derive from \eqref{transition_log} that
\begin{equation}\label{loggamma_gibbs}
\begin{split}
		&\quad \EE\Big[F\big(\hat{\mathcal{L}} \vert_{[k_1,k_2]_{\Z}\times \Lambda_d(a,b)}\big)\big\vert \mathcal{F}_{\textrm{ext}}\big([k_1,k_2]_{\Z}\times (a,b)_{\mathbb{Z}}\big)\Big]\\
		&\propto \int F\big(\hat{\mathcal{L}}  \vert_{[k_1,k_2]_{\Z}\times \Lambda_d(a,b)}\big)  \prod_{u\in \Lambda_d[a,b]}\prod_{i=k_1}^{k_2} \exp\Bigg[-\Hamd(\rectangle(\hat{\mathcal{L}}  ,i,u)) \Bigg]\times \\
		&\qquad\prod_{j=k_1}^{k_2} \exp\bigg(-\Hrw(\hat{\mathcal{L}}_{ j}(u+1)-\hat{\mathcal{L}}_{ j}(u))\bigg){d\hat{\mathcal{L}}_{ j}(u)},
	\end{split}
\end{equation}
In view of \eqref{loggamma_gibbs}, we have
\begin{align*}
 \EE\Big[F\big(\hat{\mathcal{L}} \vert_{[k_1,k_2]_{\Z}\times \Lambda_d(a,b)}\big)\big\vert \mathcal{F}_{\textrm{ext}}\big([k_1,k_2]_{\Z}\times (a,b)_{\mathbb{Z}}\big)\Big]=\mathbb{E}_{\Hamd,\Hrw}^{k_1,k_2,\Lambda_d(a,b),\vec{x},\vec{y},f,g}[F(\tilde{\mathcal{L}})]. 
\end{align*}
See Definition~\ref{def:RWB_Gibbs_ensemble}. This proves the $(\Hamd,\Hrw)$-Gibbs property.
\end{proof}

Lemma~\ref{Gibbs_pro_pre} shows that $\mathcal{L}_{K}(n)$ defined in Definition~\ref{log-gamma line ensemble} has the $(\Hamd,\Hrw)$-Gibbs property for $n\geq K$. However, for $n<K$, the $(\Hamd,\Hrw)$-Gibbs property is not directly implied by Lemma~\ref{Gibbs_pro_pre}. The main reason is that \eqref{def:transition_kernel} is the Markov kernel for $y(n)$ only for $n\geq K$. We resolve this issue by showing $\mathcal{L}_K$ can be approximated by a sequence $\hat{\mathcal{L}}^M$ which enjoys the $(\Hamd,\Hrw)$-Gibbs property.


Fix $\rho=(\rho_{1}, \rho_2,\dots,\rho_K)=\left(2^{-1}(K-1) ,2^{-1}(K-1) -1,\cdots, -2^{-1}(K-1)  \right).$ For $M>0$, let $y^{0,M} :=(e^{-M\rho_{1}},\dots, e^{-M\rho_{K}}) $. Define $y^M(n) :=(y^M_i(n))_{1\leq i\leq K, n\geq 1}$ as a Markov chain in $\mathbb{Y}_K$ with the initial state $y^M(0)=y^{0,M}$ and transition kernel given by \eqref{def:transition_kernel}. Define $\hat{\mathcal{L}}^M_{i}(n):=\log y^M_i(n)$ as a $[1,K]_{\mathbb{Z}}\times\mathbb{N}$ indexed line ensemble. From Lemma~\ref{Gibbs_pro_pre}, $\hat{\mathcal{L}}^M$ satisfies the $(\Hamd,\Hrw)$-Gibbs property.

The following proposition is proved in \cite[Proposition 5.3]{COSZ}, which says that $\{y^M(n), n\geq 1\}$ converges weakly to $y(n)$ on compact sets. As a consequence, $\hat{\mathcal{L}}^M$ converges weakly to $\mathcal{L}_L$ on compact sets.

\begin{proposition}\label{pro:COSZ}
Let $\{y^M(n), n\geq 1\}$ be the Markov chain on $(0,\infty)^K$ defined as above and let $y_i(n),\ 1\leq i\leq n\wedge K, n\geq 1$ be defined as in \eqref{def:y}. Then for any $n_0\geq 1$, $\{y^M_i(n): 1\leq i\leq n\wedge K,\ 1\leq n\leq n_0\}$ converges weakly to $\{y_i(n): 1\leq i\leq n\wedge K,\ 1\leq n \leq n_0\}$ as $M$ goes to infinity.
\end{proposition}

\begin{remark}
It is proved in \cite[Proposition 5.3]{COSZ} not only the weak convergence of the shape $y(n)$ but also the weak convergence of the array $z(n)$.
\end{remark}

\begin{proposition}\label{Gibbs_pro}
Let $\mathcal{L}_K$ be the line ensemble defined in Definition \ref{log-gamma line ensemble} and $\Hamd,$ $\Hrw$ be defined in \eqref{log-gamma_line_hamil} and \eqref{log-gamma_line_rw_pdf} respectively. Then $\mathcal{L}_K$ satisfies the $(\Hamd,\Hrw)$-Gibbs property in the region it is defined. Precisely, for any $k_1\leq k_2$ with $k_1,k_2\in [1,K]_{\mathbb{Z}}$ and $a<b$ with $a,b \in \mathbb{N}$ and $k_2\wedge K\leq a$, \eqref{loggamma_gibbs} holds with $\hat{\mathcal{L}}$ replaced by $\mathcal{L}_K$.
\end{proposition}

\begin{proof}
From Proposition~\ref{pro:COSZ} and Lemma~\ref{Gibbs_pro_pre}, $\mathcal{L}_K$ is the weak limit of $\hat{\mathcal{L}}^M $ as $M$ goes to infinity, which enjoys the $(\Hamd,\Hrw)$-Gibbs property. Therefore it suffices to show $(\Hamd,\Hrw)$-Gibbs property is preserved under the weak limit. The proof proceeds similarly as the one of Theorem~\ref{theorem:main} (2), thus we omit repeated details and focus on the part that needs modification. 

Fix a compact region $[k_1,k_2]_{\mathbb{Z}}\times [a,b]_{\mathbb{Z}}$ in which $\mathcal{L}_K$ is defined. Through Skorohod representation theorem, in $[k_1,k_2]_{\mathbb{Z}}\times [a,b]_{\mathbb{Z}}$ one can couple $\mathcal{L}_K$ with the converging sequence $\hat{\mathcal{L}}^M $ in the same probability space such that $\hat{\mathcal{L}}^M $ converges to $\mathcal{L}_K$ almost surely. One can reformulate the $(\Hamd,\Hrw)$-Gibbs property using a resampling invariance  in the same way as \eqref{resample}. We can then essentially follow the arguments in the proof of Theorem \ref{theorem:main} (2). The only exception is that the analogous statement \eqref{KMT_application} can be derived by Lemma \ref{lem:RW_coupling} instead of the KMT coupling. 
\end{proof} 

\subsection{Scaled log-gamma line ensemble}\label{sec:5.2}
In this subsection, we introduce the weak noise scaling for log-gamma line ensembles. Applying Theorem~\ref{theorem:main}, we show that a sequence of scaled log-gamma line ensemble is tight when restricted on compact sets. Furthermore, any subsequential limit satisfies the $\Ham$-Brownian Gibbs property with $\Ham(x)=e^x$.\\

We consider a sequence of log-gamma line ensembles in which the parameters $\gamma$ and $K$ changes. Let $\gamma_N = N^{1/2}$. We denote by $\mathcal{L}^N_K $ the line ensemble defined in Definition~\ref{log-gamma line ensemble} with $\gamma$ being $\gamma_N$. 

We start with the weak noise scaling and the convergence of the lowest indexed curve. The following result is a consequence of \cite[Theorem 2.7]{AKQ} with slight modifications of their arguments, which we explain in the proof below.
\begin{proposition}\label{pro:AKQ}
Consider the weak noise scaling for lowest indexed curve as
\begin{equation}\label{def:1227}
\begin{split}
 \tilde{\cL}^N_1(t,u) := &\mathcal{L}^N_{Nt/2,1}\left( 2^{-1}Nt+ {N}^{1/2}u\right)+2^{-1}\log {N}-\log 2\\
 &-(Nt+ {N}^{1/2}u)\left( \log 2-\log( {N}^{1/2}-1) \right) .
\end{split}
\end{equation}
Then $\tilde{\cL}^N_1(t,u)$ converges weakly to $\log \mathcal{Z}_{\sqrt{2}}(t,u)$ in the topology of uniform convergence on compact sets of $\{t\in \mathbb{R}_{+},\ u\in\mathbb{R}\}$.

Here $\mathcal{Z}_{\sqrt{2}}(t,u)$ is defined by the following chaos expansion with convention $t_0 = 0, u_0 = 0$,
\begin{align} 
\mathcal{Z}_{\sqrt{2}}(t,u) := &\rho(t,u)+\sum_{k=1}^{\infty} (\sqrt{2})^k\int_{\Delta_k(0,t]}\int_{\mathbb{R}^k}\prod_{i=1}^k W(t_i,u_i)\times\label{eq:SHE_Chaos}\\
&\rho (t_{i}-t_{i-1},u_{i}-u_{i-1})\rho(t-t_k,u-u_k) du_i dt_i\nonumber.
\end{align}
In the above expression, $W(t,u)$ is a white noise on $\mathbb{R}_+\times\mathbb{R}$ with covariance structure $\mathbb{E}[W(t,x)W(s,y)]=\delta(t-s)\delta(x-y)$. $\rho(t,u)$ is the standard Gaussian heat kernel such that
\begin{align}\label{def:heatkernel}
\rho(t,u)=\frac{e^{-u^2/2t}}{\sqrt{2\pi t}}.
\end{align}
And the integral is over a $k$-dimensional simplex $\Delta_k(0,t] = \{0=t_0<t_1<t_2<\dots<t_k \leq t\}$. 
\end{proposition}

\begin{proof}
We prove the above result by explaining how the convergence follows from the arguments for \cite[Theorem 2.7]{AKQ}. Note that \cite[Theorem 2.7]{AKQ} shows that, under weak noise scaling (called intermediate disorder regime in \cite{AKQ}), a modified point-to-point partition function of directed polymer model in dimension 1+1, denoted by 
\begin{align}\label{equ:12261147}
 2^{-1}N^{1/2} \mathfrak{Z}^{\omega}(Nt+N^{1/2} u,N^{1/2} u;N^{-1/4}), 
\end{align}
converges to the chaos series $\mathcal{Z}_{\sqrt{2}}(t,x)$ in \eqref{eq:SHE_Chaos}, which is the solution to stochastic heat equation with multiplicative noise. The random environment $\omega=\{\omega(i,j), i,j\in\mathbb{N}\}$ considered therein is of mean zero, variance one and has sixth moments. 

To match the above result with the log-gamma polymer model, we need to allow $\omega$ to change in $N$. For $N\in\mathbb{N}$, let $\omega_N$ be an i.i.d. random environment in which each $\omega_N(i,j)$ is distributed as 
\begin{align}\label{def:omega_N}
\omega_N(i,j) \overset{(d)}{=} N^{1/4}\left(\frac{\textup{Inv-Gamma}(\sqrt{N})}{\mathbb{E}\left[\textup{Inv-Gamma}(\sqrt{N})\right]}-1\right).
\end{align}
From the definition $\mathfrak{Z}^{\omega}$ in \cite[(2.4)]{AKQ} and  the definition of$\tau$ in \eqref{def:tau} and $\mathbb{E}\left[\textup{Inv-Gamma}( {N}^{1/2})\right] =  ( {N}^{1/2}-1)^{-1}$, \eqref{equ:12261147} with $\omega$ replaced by $\omega_N$ equals
\begin{align}\label{equ:12261149}
 2^{-1}N^{1/2} \cdot  2^{-(Nt+N^{1/2} u)}(N^{1/2}-1) ^{ Nt+N^{1/2}u }\,\tau_{ Nt/2,1}\left(2^{-1} Nt+N^{1/2}u\right).
\end{align}
From Definition \ref{log-gamma line ensemble}, we have
\begin{align}\label{equ:12271203}
\cL^N_{Nt/2,1}\left( 2^{-1}Nt+ {N}^{1/2}u\right) = \log \tau_{Nt/2,1}\left(2^{-1}Nt+ {N}^{1/2}u\right).
\end{align} 
Comparing \eqref{def:1227}, \eqref{equ:12271203}, \eqref{equ:12261147} and  \eqref{equ:12261149}, we get
$$ \tilde{\cL}^N_1(t,u)=\log \left(2^{-1}N^{1/2} \mathfrak{Z}^{\omega_N}(Nt+N^{1/2} u,N^{1/2} u;N^{-1/4}),  \right).$$

It remains to show the convergence of $2^{-1}N^{1/2} \mathfrak{Z}^{\omega_N}(Nt+N^{1/2} u,N^{1/2} u;N^{-1/4})$. From \eqref{def:omega_N} and \eqref{def:gamma}, $\omega_N$ has mean zero and variance $1+O(N^{-1/2})$. Moreover, the sixth moment of $\omega_N$ is $15+O(N^{-1/2})$. Under such conditions, one can run the same arguments in \cite{AKQ} to obtain the convergence. 
\end{proof}

In order the match the $\mathbf{H}$-Brownian Gibbs property, we perform one more scaling and define the scaled log-gamma line ensemble as follows.
\begin{definition}
Let 
\begin{equation}\label{def:Lbar1227}
\begin{split}
\overbar{\mathcal{L}}^N_i(t,u):=&\mathcal{L}^N_{Nt/8,i}\left( 8^{-1}Nt+2^{-1}{N}^{1/2} u\right)+2^{-1}\log N-\log 2\\
&-\left(4^{-1} Nt+2^{-1}  {N}^{1/2}u\right)\left( \log 2-\log( {N}^{1/2}-1) \right).
\end{split}
\end{equation} 
Notice that $\overbar{\mathcal{L}}^N_i(t,u)$ is defined for $N\in\mathbb{N}$, $t\in  {8}N^{-1} \mathbb{N}$, $u\in {2}{ {N}^{-1/2}}\mathbb{Z}$ and $i\leq (8^{-1}Nt )\wedge (8^{-1}Nt +2^{-1} {N}^{1/2}u )$. 
\end{definition}
The relation between $\overbar{\mathcal{L}}^N_1$ and $\tilde{\mathcal{L}}^N_1$ (defined in \eqref{def:1227}) is $\overbar{\mathcal{L}}^N_1(t,u)=\tilde{\mathcal{L}}^N_1(4^{-1}t ,2^{-1}u )$. \\

Now we are ready to state the main application of Theorem~\ref{theorem:main} as follows. We fix $t =1$ for notation simplicity and the same result holds for any $t>0$ by the same argument modulo the modification that $\overbar{\mathcal{L}}^N_1(t,u) + \frac{u^2}{2t}$ converges to a stationary process.

\begin{theorem}\label{thm:loggamma}
For any $N\in\mathbb{N}$, let $\overline{\mathcal{L}}^N	$ be the discrete line ensemble defined in \eqref{def:Lbar1227}. Fix arbitrary $k\in \mathbb{N}$ and $T>0$. Then restriction of  $\{\overbar{\mathcal{L}}^{N}_j(1,u)$ to $j\in [1,k]_{\mathbb{Z}}$ and $u\in [-T,T]$\ is tight as $N$ varies. Moreover, any subsequential limit line ensemble satisfies the $H$-Brownian Gibbs property with $\Ham(x)= e^x$. 
\end{theorem}

The rest of the section is devoted to prove Theorem~\ref{thm:loggamma}. To simplify notations, we denote $\overbar{\mathcal{L}}^N_i(u)=\overbar{\mathcal{L}}^N_i(1,u)$. We need to verify the assumptions in Theorem~\ref{theorem:main} are satisfies for $\overbar{\mathcal{L}}^N$. We recall those assumptions here for readers convenience.
\begin{itemize}
\item $\overbar{\mathcal{L}}^N$ is a $(\Hamd^N,\HrwN)$-line ensemble for some $\Hamd^N$ and $\HrwN)$.
\item $\overbar{\mathcal{L}}^N_1(u)+u^2/2$ converges weakly to a stationary process.
\item Assumptions A1-A4 in Subsection~\ref{Assumptions} hold. 
\end{itemize}

 We begin by showing the $(\Hamd^N,\HrwN)$-Gibbs property for $\overline{\mathcal{L}}^N$. From Proposition~\ref{Gibbs_pro}, $\cL_K$ satisfies the $(\Hamd,\Hrw)$-Gibbs property for $\Hamd$ and $\Hrw$ given by \eqref{log-gamma_line_hamil} and \eqref{log-gamma_line_rw_pdf} respectively. Let $d_N=2N^{-1/2}$ and $\Lambda^N_d=d_N\mathbb{Z}$. Through a direct calculation, $\overbar{\mathcal{L}}^N_i(u)$ satisfies a $(\Hamd^N,\HrwN)$-random walk Gibbs property over the region 
 \begin{equation*}
 \left\{u\in 2N^{-1/2}\mathbb{Z} ,\ i\leq (8^{-1}N )\wedge (8^{-1}N +2^{-1} {N}^{1/2}u )\right\}
 \end{equation*}
with the local interaction Hamiltonian
\begin{align}\label{loggamma_H^N}
\Hamd^N(\rectangle(\overbar{\mathcal{L}}^N,k,u)):= 2(N^{1/2}-1)^{-1} \exp\left( \overbar{\mathcal{L}}^N_{k+1}(u+2N^{-1/2})-\overbar{\mathcal{L}}^N_k(u) \right),
\end{align}
and the random walk Hamiltonian
\begin{align}\label{loggamma_H^RWN}
\HrwN(x):=&\log\Gamma ( {N}^{1/2})+ {N}^{1/2}\left( x-\log ( {N}^{1/2}-1)+\log 2 \right)\\
&+\exp\left( -x+\log ({N}^{1/2}-1)-\log 2 \right)\nonumber.
\end{align}

Next, we show that $\overbar{\mathcal{L}}^N_1(u)+u^2/2$ converges weakly to a stationary process. Through the scaling invariance of the white noise, $W(at,bu)$ has the same distribution as $ (ab)^{-1/2} W(t,u)$. From \eqref{eq:SHE_Chaos}, this implies that for any $\lambda,\beta>0$, $\mathcal{Z}_{\beta}(\lambda^2 t,\lambda u)$ has the same distribution as $\lambda^{-1}\mathcal{Z}_{\lambda^{1/2}\beta}(t,u)$. Together with Proposition \ref{pro:AKQ}, $\overbar{\cL}^N_1(t,u)$ converges weakly to $\log \mathcal{Z}_1(t,u)+\log 2$. In \cite[Proposition 2.3]{AKQ}, it is shown that for any  $t>0$, $\mathcal{Z}_\beta(t,u)/\rho(t,u)$ is a stationary process in $u$ where $\rho(t,u)$ is the heat kernel in \eqref{def:heatkernel}. Therefore, $\overbar{\cL}^N_1(1,u)+ {u^2}/{2}$ converges weakly to a stationary process. \\

We now verify Assumptions A1 and A2. Assumption A2 can be checked directly from the form in \eqref{loggamma_H^RWN}. The same holds for Assumptions A1 (1) and we focus on Assumptions A1 (2). 

Since $\Hamd^N$ depends only on the second and the sixth entry, to check Assumption A1(2), it's sufficient to consider $\vec{a}= (a_2, a_6),\ \vec{b}=(b_2,b_6)\in(\mathbb{R}\cup\{\pm\infty\})^2$ with $a_2\geq b_2$, $a_6\geq b_6$ and $a_2=b_2$ or $a_6=b_6$. For the case $a_2=b_2$, let $\delta>0$ be a fixed number and
\begin{align*}
\vec{a}'=(a_2+\delta,a_6),\ \vec{b}'=(b_2+\delta,b_6),
\end{align*}
Then 
\begin{align*}
-\Hamd^N(\vec{a}')+\Hamd^N(\vec{a})= &{2(N^{1/2}-1)^{-1}(e^{a_6-a_2}-e^{a_6-a_2-\delta}) },\\
-\Hamd^N(\vec{b}')+\Hamd^N(\vec{b})= & 2(N^{1/2}-1)^{-1}(e^{b_6-b_2}-e^{b_6-b_2-\delta}).
\end{align*}
From the convexity of $e^x$, $a_6\geq b_6$, $a_2=b_2$ and $\delta>0$, we have
\begin{align*}
-\Hamd^N(\vec{a}')+\Hamd^N(\vec{a})\geq -\Hamd^N(\vec{b}')+\Hamd^N(\vec{b}). 
\end{align*}
We then deal with the case $a_6=b_6$. Let $\delta>0$ be a fixed number and
\begin{align*}
\vec{a}'=(a_2,a_6+\delta),\ \vec{b}'=(b_2,b_6+\delta),
\end{align*}
Then 
\begin{align*}
-\Hamd^N(\vec{a}')+\Hamd^N(\vec{a})= &{2(N^{1/2}-1)^{-1}(e^{a_6-a_2}-e^{a_6-a_2+\delta}) },\\
-\Hamd^N(\vec{b}')+\Hamd^N(\vec{b})= &{2(N^{1/2}-1)^{-1}(e^{b_6-b_2}-e^{b_6-b_2+\delta}) }.
\end{align*}
From the convexity of $e^x$, $a_6= b_6$, $a_2\geq b_2$ and $\delta>0$, we have
\begin{align*}
-\Hamd^N(\vec{a}')+\Hamd^N(\vec{a})\geq -\Hamd^N(\vec{b}')+\Hamd^N(\vec{b}). 
\end{align*}
This finishes the verification of Assumption A1. \\

Next, we check Assumption A3. Recall that $d_N=2N^{-1/2}$ and $\Lambda^N_d=2N^{-1/2}\mathbb{Z}$. Fix arbitrary $k\in\mathbb{Z}$, $a<b$ with $a,b\in\Lambda^N_d$ and $\mathcal{L}=(\cL_k,\cL_{k+1})\in C([k,k+1]_{\mathbb{Z}}\times [a,b],\mathbb{R})$, we compute
\begin{align*}
\sum_{u\in \Lambda^{N}_d(a,b)} \Hamd^N(\rectangle(\mathcal{L},k,u))=&\sum_{u\in \Lambda^{N}_d(a,b)}  {2}( {N}^{1/2}-1)^{-1} \exp\left( {\mathcal{L}}_{k+1}(u+2N^{-1/2} )-{\mathcal{L}}_k(u) \right)\\
\leq & (1-N^{-1/2})^{-1} e^{\omega_{(a,b)}(\mathcal{L}_{k+1},d_N )}  \sum_{u\in \Lambda^{N}_d(a,b)} d_N^{-1}  \exp\left( {\mathcal{L}}(u)-{\mathcal{L}}_k(u) \right)\\
\leq &(1-N^{-1/2})^{-1} e^{3\omega_{(a,b)}(\mathcal{L} ,d_N)} \int_a^b \exp(\mathcal{L}_{k+1}(u)-\mathcal{L}_{k}(u))du.
\end{align*}
Similarly,
\begin{align*}
 \sum_{u\in \Lambda^{N}_d(a,b)} \Hamd^N(\rectangle(\mathcal{L},k,u)) \geq e^{-3\omega_{(a,b)}(\mathcal{L} ,d_N)}  \int_a^b \exp(\mathcal{L}_{k+1}(u)-\mathcal{L}_{k}(u))du.
\end{align*}
These two yield Assumption A3.\\

Lastly, we turn to Assumption A4. For any $\gamma>0$, we denote by 
$$Y(\gamma)=-\log\big( \textup{Inv-Gamma} (\gamma)\big)$$ 
the log-gamma random variable with parameter $\gamma$. From \eqref{densityinvgamma} and \eqref{log-gamma_line_rw_pdf}, $\exp(-\HrwN(x))$ is the density function of the random variable $-Y(\sqrt{N})+\log (\sqrt{N}-1)-\log 2$. For any $L>0$ with  $ d_N^{-1} L \in\mathbb{N}$ and $z\in\mathbb{R}$, let $\bar{S}^N_{L,z}$ be the random walk bridges defined in Definition~\ref{def:RW_ensemble} woth $\HrwN$ and $d_N \mathbb{Z}$. We need to verify that $\bar{S}^N_{L,z}$ satisfies the estimate in Assumption A4.\\

To this end, we rely on \cite[Corollary 8.1]{DW}, which provides the desired estimates for normalized random variables. We start by recording the result of \cite[Corollary 8.1]{DW}. Let $m(\gamma)$ and $\sigma(\gamma)^2$ be the mean and variance of $Y(\gamma)$ respectively. For $j\in\mathbb{N}$, let $X_j(\gamma)$ be i.i.d. random variables with $X_1(\gamma) \overset{(d)}{=} \sigma(\gamma)^{-1}(Y(\gamma)-m(\gamma)) $. For any $n\in\mathbb{N}$ and $k\in [0,k]_{\mathbb{Z}}$, let $S_{n,z}(k;\gamma)$ be the random walk bridge defined in \eqref{equ:1225716} with $X_j$ replaced by $X_j(\gamma)$. For general $u\in [0,n]$, we define $S_{n,z}(u;\gamma)$ through linear interpolation. 
\begin{corollary}\label{cor:DW}
For any $b>0$ and $\gamma_0>0$, there exists constants $0<C,a,\alpha'<\infty$ such that the following holds. For every positive integer $n$ and $\gamma\geq \gamma_0$, there is a probability space $(\Omega_n,\mathcal{B}_n,\mathbb{P}_n)$ on which are defined a Brownian bridge $B_1(u)$, $u\in [0,1]$ and a family of processes $\ S_{n,z}(u ;\gamma),\ u\in [0,n]$ and $ z\in\mathbb{R}$ such that for any $r\geq 0$,
\begin{align}\label{KMT_DW}
\mathbb{P}_n\left( \sup_{0\leq u\leq n}\left|n^{1/2} B_1(n^{-1} u )+n^{-1}zu -S_{n,z}(u;\gamma)\right|\geq r \log n \right)\leq C n^{\alpha'-ar}e^{bz^2/n}.
\end{align}
Moreover, $S_{n,z}(u;\gamma)$ is realized as a measurable map $\mathbf{F}_n:\mathbb{R}\times\Omega_n\to C([0,n],\mathbb{R})$. In particular, $S_{n,z}(u;\gamma)$ is measurable in $z$ as a function from $\mathbb{R}$ to $C([0,n],\mathbb{R})$.
\end{corollary}

\begin{proof}
The estimate \eqref{KMT_DW} is a direct consequence of \cite[Corollary 8.1]{DW} and we focus on the measurability of $\mathbf{F}_n$. To simplify the notation, we drop the dependence of $\gamma$ and denote $S_{n,z}(u;\gamma)$ by $S_{n,z}(u)$ \\

The construction of $\mathbf{F}_n$ is based on an induction argument. For $n=1$, one take $\Omega=\{\textup{pt}\}$ be a set with a single element and $\big(\mathbf{F}_1(z,\textup{pt})\big)(u)=uz$.

 Assume $n\geq 2$ and $n=2m$ is even for simplicity. By the induction hypothesis, for $i=1,2$, there exists $(\Omega^i_m,\mathcal{B}^i_m,\mathbb{P}^i_m)$ and measurable maps $\mathbf{F}^i_m:\mathbb{R}\times \Omega_m^i\to C([0,m],\mathbb{R})$ such that $\mathbf{F}^i_m(z,\cdot)$ is distributed as ${S}_{m,z}$. From Lemma~\ref{lem:RW_middle}, there exists a continuous function  $G:\mathbb{R}\times (0,1)\to \mathbb{R}$ such that under the Lebesgue measure on $(0,1)$, $G(z,v)$ is distributed as $S_{n,z}(m)$. From Lemma~\ref{lem:RW_coupling}, there exist a probability space $(\Omega_{\textup{ind}},\mathcal{B}_{\textup{ind}},\mathbb{P}_{\textup{ind}})$ and a measurable map $\mathbf{G}_{\textup{ind}}:\mathbb{R}\times \Omega_{\textup{ind}}\to C([0,n],\mathbb{R})$ such that $\mathbf{G}_{\textup{ind}}(z,\cdot)$ is distributed according to $S_{n,z}$. 
 
We are ready to construct $\Omega_n$ and $\mathbf{F}_n$. Let $\Omega_n=\Omega^1_m\times\Omega_{\textup{mid}}\times\Omega^2_m\times\Omega_{\textup{ind}}$. Set $\mathcal{B}_n$ and $\mathbb{P}_n$ be the product sigma field and measure respectively. Fix some $\varepsilon_0>0$. For $\omega=(\omega^1_m,\omega_{\textup{mid}},\omega^2_m,\omega_{\textup{ind}})\in\Omega$ and $z\in\mathbb{R}$, $\mathbf{F}_n(z,\omega)$ is defined by
\begin{align*}
\big(\mathbf{F}(z,\omega)\big)(u):=\left\{
\begin{array}{cc}
\big(\mathbf{G}_{\textup{ind}}(z,\omega_{\textup{ind}})\big)(u), & |z|\geq \varepsilon_0 n,\\
\big(\mathbf{F}^1_{m}(G(z,\omega_{\textup{ind}}),\omega^1_{m})\big)(u), & |z|< \varepsilon_0 n\ \textup{and}\ u\in [0,m],\\
G(z,\omega_{\textup{ind}})+ \big(\mathbf{F}^2_{m}(z-G(z,\omega_{\textup{ind}}),\omega^2_{m})\big)(u), & |z|< \varepsilon_0 n\ \textup{and}\ u\in (m,n].
\end{array}
\right.
\end{align*}
From the measurability of $\mathbf{F}^1_{m}$, $\mathbf{F}^2_{m}$, $G$ and $\mathbf{G}_{\textup{ind}}$, $\mathbf{F}_n$ is measurable.
\end{proof}

Denote\ $q(N) :=\log( {N}^{1/2}-1)-\log 2-m( {N}^{1/2})$ and we will use $m,\sigma,q$ as shorthand for $m( {N}^{1/2}),\sigma( {N}^{1/2}),q(N)$ to simply notations. We have 
$$-Y(\sqrt{N})+\log ( {N}^{1/2}-1)-\log 2 \overset{(d)}{=}-\sigma X_1( {N}^{1/2})+q.$$
Together with $d_N=2N^{-1/2}$, this implies
\begin{align}\label{tildeS RWB and S RWB} 
\bar{S}^N_{L,z}\left(u\right)\overset{(d)}{=} -\sigma S_{d_N^{-1} L,-\sigma^{-1}(z-d_N^{-1}Lq)}\left(d_N^{-1} u;N^{1/2}\right)+d_N^{-1} uq.
\end{align}

To apply Corollary~\ref{cor:DW} to $\bar{S}^N_{L,z}$, we use a tilting trick and identify the random walk bridge $\bar{S}^N_{L,z}$ with another bridge. For two random variables $X$ and $X'$ with density functions $f_X$ and $f_{X'}$ separately, we say $X$ and $X'$ are related through tilting if there exist $t\in\mathbb{R}$ and a positive constant $C$ such that
\begin{align*}
f_X(x) = Ce^{tx}f_{X'}(x).
\end{align*}
We need the following result.
\begin{lemma}\label{lem:tilting}
Suppose $X$ and $X'$ are related through tilting. Then the random walk bridges, constructed by the distribution of $X$ and $X'$ separately, have the same distribution.
\end{lemma}
\begin{proof}
This lemma could be proved by directly comparing the densities of the two bridges.
\end{proof}

Choose $\xi$ and $\mu$ (depending on $N$) such that
\begin{align*}
m(\xi)=&m(\sqrt{N})+q({N}),\\
\mu=&\sigma(\xi)/\sigma(\sqrt{N}).
\end{align*}
Through a direct verification, $X_1( {N}^{1/2})-\sigma^{-1} q$ and $\mu X_1(\xi)$ are related through tilting. From Lemma~\ref{lem:tilting} and \eqref{tildeS RWB and S RWB}, it holds that 
\begin{align*}
\bar{S}^N_{L,z}(u)\;
\overset{(d)}{=}\; -\sigma\mu S_{d_N^{-1} L ,-\sigma^{-1}\mu^{-1}z }\left(d_N^{-1}u ;\xi\right).
\end{align*}

Let $n=d_N^{-1} L$ and $u'=d_N^{-1} u$. By the scale invariance for Brownian bridges $ {L}^{1/2} B_1(L^{-1}u) \overset{(d)}{=} B_L(u)$, we deduce that
\begin{align*}
\mu\sigma\left( n^{1/2} B_1(n^{-1} u')+ n^{-1}\mu^{-1}\sigma^{-1}z u' +S_{n,-\mu^{-1}\sigma^{-1}z}(u';\xi) \right)
\end{align*}
has the same distribution as
\begin{align*}
2^{-1/2}\mu\sigma N^{1/4}B_L(u)+L^{-1}z {u} -\bar{S}^N_{L,z}(u).
\end{align*}

In order to apply Lemma~\ref{cor:DW}, we compute the asymptotics for $\xi$ and $\mu$. First by \cite[(8.10)]{DW}, as $N$ goes to infinity, we have
\begin{align*}
m( {N}^{1/2})=& 2^{-1}\log {N}+O(N^{-1/2}),\ \sigma( {N}^{1/2})= N^{-1/4}+O(N^{-3/4}),\ q(N)= -\log 2+O(N^{-1/2}).
\end{align*}
Therefore,
\begin{align*}
\xi=&2^{-1}N^{1/2}+O(1),\ \mu= 2^{1/2}+O(N^{-1/2}).
\end{align*}
Since $\xi$ goes to infinity, \eqref{KMT_DW} applies and we obtain the following estimate. For any $b>0$, there exist $0<C,a,\alpha'<\infty$ such that for every $N\geq 2$ and $L>0$
\begin{equation}\label{KMT_badcoef}
\begin{split}
&\mathbb{P}\left( \sup_{0\leq u\leq L}\left| 2^{-1/2}\mu\sigma N^{1/4}B_L(u)+L^{-1}z {u} -\bar{S}^N_{L,z}(u). \right|\geq r \mu\sigma\log(d_N^{-1} L ) \right)\\
\leq &C ( d_N^{-1}L)^{\alpha'-ar} e^{2\mu^{-2}\sigma^{-2}N^{-1/2}b z^2/L}.
\end{split}
\end{equation}

We are ready to verify Assumption A4. Fix $b_1,b_2>0$. From the asymptotic of $\sigma$ and $\mu$, we choose $b>0$ such that
\begin{align*}
\sup_{N\geq 2} 2\mu^{-2}\sigma^{-2}N^{-1/2}b= b_2. 
\end{align*}
Let $C,a$ and $\alpha'$ be determined through Corollary \ref{cor:DW} with $\gamma_0=\inf_{N}\xi.$ Take $r>0$ such that $\alpha'-ar=-b_1$ and then take $a_1$ such that 
\begin{align*}
a_1 = 2^{3/2}r\, \sup_{N}  N^{-1/4} \mu(N)\sigma(N^{1/2}).
\end{align*} 
Set $a_2=\{8, 2C\}$. With constants above, we can rewrite \eqref{KMT_badcoef} as
\begin{align}\label{equ:last1}
&\mathbb{P}\left( \sup_{0\leq u\leq L}\left| 2^{-1/2}\mu\sigma N^{1/4}B_L(u)+L^{-1}z {u} -\bar{S}^N_{L,z}(u). \right|\geq r \mu\sigma\log(d_N^{-1} L ) \right) \leq   2^{-1}  a_2 (d_N^{-1}L)^{-b_1}e^{b_2z^2/L}.
\end{align}

The last step is to change to coefficient of $B_L(u)$ to $1$. From the asymptotic of $\mu$ and $\sigma$, there exists a constant $C_0$ such that for all $N$,
\begin{align*}
|2^{-1/2}\mu\sigma N^{1/4}-1|\leq C_0 N^{-1/2}.
\end{align*}
From Lemma~\ref{Bbridge_sup} and $d_N=2N^{-1/2}$, 
\begin{align}\label{equ:last2}
&\mathbb{P}\left( \sup_{0\leq u\leq L}\left|C_0N^{-1/2}B_L(u) \right|\geq 2^{-1} {a_1}   d_N^{1/2} \log(d_N^{-1}L ) \right) \leq 4(d_N^{-1}L)^{-4^{-1} a_1^2 C_0^{-2} N^{1/2}  L^{-1} }.
\end{align}
By taking $N$ large enough depending on $L$ and $b_1$, we can arrange
\begin{align}\label{equ:last3}
&4(d_N^{-1}L)^{-4^{-1} a_1^2 C_0^{-2} N^{1/2}  L^{-1} } \leq 2^{-1} a_2 ( d_N^{-1} L)^{-b_1}.
\end{align}
Combining \eqref{equ:last1}, \eqref{equ:last2} and \eqref{equ:last3}, we conclude that
\begin{equation*}
\begin{split}
&\mathbb{P}\left( \sup_{0\leq u\leq L}\left| B_L(u)+ L^{-1}z {u} -\bar{S}^N_{L,z}(u) \right|\geq a_1 d_N^{1/2} \log(d_N^{-1} L ) \right)\\
\leq &\mathbb{P}\left( \sup_{0\leq u\leq L}\left| 2^{-1/2}\mu\sigma N^{1/4} B_L(u)+ L^{-1}z {u} -\bar{S}^N_{L,z}(u) \right|\geq 2^{-1}a_1 d_N^{1/2} \log(d_N^{-1} L )  \right)\\
&+ \mathbb{P}\left( \sup_{0\leq u\leq L}\left|C_0N^{-1/2}B_L(u) \right|\geq 2^{-1} a_1 d_N^{1/2} \log(d_N^{-1} L )\right)\\
\leq & a_2 ( d_N^{-1} L)^{-b_1} e^{b_2z^2/L}.
\end{split}
\end{equation*}
This finishes the verification of Assumption A4. The proof of Theorem \ref{thm:loggamma} is completed.

\begin{appendix}
\section{Random walk bridges}\label{sec:rwb}
\begin{lemma}\label{lem:fcontinue}
For any $k\in\mathbb{N}$, the $f_k(x)$ defined in \eqref{def:fk} is a positive continuous function.
\end{lemma}
\begin{proof}
We prove inductively on $k$. For $k=1$, $f_1(x)=\exp\left( -\Hrw (x) \right)$ is clearly positive and continuous. Assume $k\geq 2$ and $f_{k-1}(x)$ is positive and continuous. The positivity of $f_k(x)$ is easy to deduced from \eqref{def:fk} and we focus on the continuity.\\

Because $f_1(x)$ is uniformly continuous on compact sets, for any $M>0$, the function
\begin{align*}
f_{k,M}(x):= \int_{-M}^M f_1(x-y) f_{k-1}(y)\, dy 
\end{align*}
is continuous. Because $f_{k-1}(x)$ is integrable on $\mathbb{R}$, for any $\varepsilon>0$, there exists $M>0$ such that
\begin{align*}
\int_{|x|>M} f_{k-1}(x)\, dx\leq  \varepsilon.
\end{align*}
This implies $0\leq f_k(x)-f_{k,M}(x)\leq \varepsilon \max_{y\in\mathbb{R}} f_1(y)$. The continuity of $f_k(x)$ then follows.
\end{proof}

\begin{lemma}\label{lem:RW_middle}
Fix $n\in\mathbb{N}$ and $k\in [0,n]_{\mathbb{N}}$. Recall that $S_{n,z}(k)$ is the random walk bridge defined in \eqref{equ:1225716}. Let  $(\Omega_{\textup{mid}},\mathcal{B}_{\textup{mid}},\mathbb{P}_{\textup{mid}})$ be $(0,1)$ equipped with the standard Borel sigma algebra and the Lebesgue measure. There exists a continuous function $G:\mathbb{R}\times (0,1)\to\mathbb{R}$ such that $G(z,\cdot)$ has the same distribution as $S_{n,z}(k)$ under $\mathbb{P}_{\textup{mid}}$.
\end{lemma}
\begin{proof}
The cases $k=0$ and $k=n$ are simple because $S_n(k)$ is deterministic. In the rest of the proof, we assume $0<k<n$. In term of the functions $f_k$ defined in \eqref{def:fk}, the cumulative distribution function of $S_{n,z}(k)$ is given by $F(z,x)=f_n^{-1}(z)\int_{-\infty}^x f_k(y)f_{n-k}(z-y)\, dy.$ In view of Lemma~\ref{lem:fcontinue}, $F(z,x)$ is a continuous function on $\mathbb{R}^2$ and strictly increasing in $x$. Let $G(z,v)$ be the function defined on $\mathbb{R}\times (0,1)$ such that $F(z,G(z,v))=v$. It is straightforward to check that for all $z\in\mathbb{R}$, through measure $\mathbb{P}_{\textup{mid}}$, $G(z,\cdot)$ has the same distribution as $S_{n,z}(k)$. It remains to show the continuity of $G$. 

Fix $(z_0,v_0)\in \mathbb{R}\times (0,1)$. Let $x_0=G(z_0,v_0)$. For any $\varepsilon>0$, let $x_{\pm}=x_0\pm \varepsilon$ and $v_\pm=F(z_0,x_\pm)$. By the strict monotonicity of $F(z_0,\cdot)$, $v_-<v_0<v_+$. Let $\delta=2^{-1}\min\{ v_+-v_0, v_0-v_- \}$. By the continuity of $F$, there exists $\delta'>0$ such that for all $z\in [z_0-\delta,z_0+\delta]$, we have $F(z,x_+)\geq v_0+\delta$ and $F(z,x_-)\leq v_0-\delta$. It can be verified that for all $(z,v)$ with $|z-z_0|\leq \delta'$ and $|v-v_0|\leq\delta$, we have $|G(z,v)-G(z_0,v_0)|\leq \varepsilon.$ This concludes the continuity of $G$. 

\end{proof}
\begin{proof}[Proof of Lemma~\ref{lem:RW_coupling}]
We use an induction argument in $n$. For $n=1$, we can simply take $\Omega_1=\{\textup{pt}\}$ be a set with a single element and $\mathbf{G}_1(z,\textup{pt})(u)=uz$.  

For $n\geq 2$, assume $n=2k$ is even for simplicity. By the induction hypothesis, for $i=1,2$, we may construct probability spaces $(\Omega^{(i)}_k,\mathcal{B}^{(i)}_k ,\mathbb{P}^{(i)}_k)$, and measurable maps $\mathbf{G}^{(i)}_k:\mathbb{R}\times\Omega^{(i)}_k\to C([0,k],\mathbb{Z})$ that satisfies the assertion in the lemma. 
 
We are ready to construct the probability space $(\Omega_n,\mathcal{B}_n,\mathbb{P}_n)$. Let $\Omega_n=\Omega^{(1)}_k\times \Omega_{\textup{mid}}\times \Omega^{(2)}_k$. Let $(\Omega_{\textup{mid}},\mathcal{B}_{\textup{mid}},\mathbb{P}_{\textup{mid}})$ and $G$ be given in Lemma~\ref{lem:RW_middle}. We define $\mathcal{B}_n$ to be the product sigma algebra and $\mathbb{P}_n$ to be the product measure. Given $z\in\mathbb{R}$ and $\omega= (\omega^{1},v,\omega^{(2)})\in\Omega_n$, define
\begin{align*}
\mathbf{G}_{n}(z,\omega)(u):=\left\{ \begin{array}{cc}
\mathbf{G}_k^{(1)}( G(z,v),\omega^{(1)} )(u),  & 0\leq u\leq k,\\
G(z,v)+ \mathbf{G}_k^{(2)}(z- G(z,v),\omega^{(2)} )(u), & k< u\leq n.
\end{array}
\right.
\end{align*}
It can be checked directly that for any $z\in\mathbb{R}$, $\mathbf{G}_n(z,\omega)$, as a random function, has the same distribution as $S_{n,z}$. Fix $\omega= (\omega^{1},v,\omega^{(2)})\in\Omega_n$. From the continuity of $\mathbf{G}^{(1)}(\cdot,\omega^{(1)}), \mathbf{G}^{(2)}(\cdot,\omega^{(2)})$ and $G(\cdot,v)$, we obtain the continuity of $\mathbf{G}_{n}(\cdot,\omega)$. Finally, from \cite[Lemma 4.51]{AB}, $\mathbf{G}_n$ is a measurable function on $\mathbb{R}\times\Omega_n$.
\end{proof}
\section{Stochastic monotonicity}\label{appendix}
In this section we prove the monotone coupling lemma, Lemma~\ref{lem:monotone} by adapting the approach of \cite{CH16}. The main idea is to realize $(\Hamd,\Hrw)$-random walk bridge line ensembles as stationary measures of certain Markov process and show that such Markov process preserves order. To ensure the Markov processes converge to the stationary measures, we prefer the state space to be finite. To that end, we need to discretize the $\Hamd$-random walks.\\


Throughout this section, we assume $\Lambda_d=\mathbb{Z}$ for simplicity. We begin by proving monotone coupling for finite-state discrete random walks. Let $X$ be a discrete random variable with compact support. We define a $(\Hamd ,X)$-random walk bridge line ensemble similar to the one in Definition~\ref{def:RWB_Gibbs_ensemble} except that the free random walk bridge measures are constructed using the law of $X$. We write $\mathbb{P}^{k_1,k_2,(a,b)_{\mathbb{Z}},\vec{x},\vec{y},f,g}_{\Hamd,X}$ the law of a $(\Hamd ,X)$-random walk bridge line ensemble. 

\begin{lemma}\label{lem:monotone_discrete}
Fix $\ell\in\mathbb{N}$ and let $X$ be a random variable which takes value in $\ell  \mathbb{Z}\cap [-\ell ,\ell]$. Assume that $\mathbb{P}(X=x)>0$ for all $x\in \ell\mathbb{Z}$ with $|x|\leq \ell$ and that $-\log\mathbb{P}(X=k\ell )$ is a convex function in $k\in[-\ell^2,\ell^2  ]_{\mathbb{Z}}$. Further assume that the Hamiltonian $\Hamd$ satisfies Assumption A1. Then the following holds.

Fix arbitrary $k_1\leq k_2$ with $k_1,k_2\in\mathbb{Z}$ and $a<b$ with $a,b \in \mathbb{Z}$. For $i=1,2$, fix vectors $(\vec{x}^i,\vec{y}^i)\in \mathbb{R}^{k_2-k_1+1}$ such that each entry lies in $\ell  \mathbb{Z}\cap [-2^{-1}\ell ,2^{-1}\ell]$. Fix functions $f^i:(a,b)_{\mathbb{Z}}\to \mathbb{R}\cup \{\infty\}$, $g^i:(a,b)_{\mathbb{Z}} \to \mathbb{R}\cup \{-\infty\}$. 

Let $\mathcal{Q}^i=\{\mathcal{Q}^i_{k_1},\dots ,\mathcal{Q}^i_{k_2} \} $ be a $[k_1,k_2]_{\mathbb{Z}}\times  [a,b]_{\mathbb{Z}}$-indexed line ensemble which has the law $\mathbb{P}^{k_1,k_2, (a,b)_{\mathbb{Z}},\vec{x}^{i},\vec{y}^{i},f^i,g^i}_{\Hamd,X}$. Assume that the $i=1$ vectors and functions are pointwise greater than or equal to their $i=2$ counterparts (e.g. $f^1(u)\geq f^2(u)$ for all $u\in (a,b)_{\mathbb{Z}}$). Then there exists a coupling such that almost surely $\mathcal{Q}^1_j(u)\geq \mathcal{Q}^2_j(u)$ for all $j\in [k_1,k_2]_{\mathbb{Z}}$ and $u\in [a,b]_{\mathbb{Z}}$.
\end{lemma}

\begin{proof}
 . For all $x\in\mathbb{R}$, we denote $\Hrw(x)=-\log\mathbb{P}(X=x) $ with the convention $-\log 0=\infty$. We will construct the monotone coupling associated to the random walk with same distribution as $X$.\\ 
 
The coupling is constructed through a Markov chain argument. We first introduce the state space and the initial state. For $i=1,2$ Let $\mathfrak{X}^i$ be the collection of $\mathcal{Q}^i\in C([k_1,k_2]_{\mathbb{Z}}\times [a,b]_{\mathbb{Z}},\mathbb{R})^2$ such that 
\begin{align*}
&\mathcal{Q}^i_j(a)=x^i_j,\ \mathcal{Q}^i_j(b)=y^i_j\ \textup{for all}\ j\in [k_1,k_2]_{\mathbb{Z}},\\
&\mathcal{Q}^i_j(u+1)-\mathcal{Q}^i_j(u)\in \ell^{-1}\mathbb{Z}\cap [-\ell,\ell]\ \textup{for all} j\in [k_1,k_2]_{\mathbb{Z}},\ u\in [a,b-1]_{\mathbb{Z}}. 
\end{align*} 
The initial configuration is set to be $(\mathcal{Q}^{1}_j(u))_0=(\mathcal{Q}^{2}_j(u))_0=0$ for all $u\in (a,b)_{\mathbb{Z}}$. Because $x^i_j,y^i_j\in \ell\mathbb{Z}\cap [-2^{-1}\ell,2^{-1}\ell]$, these states are in $\mathfrak{X}^1$ and $\mathfrak{X}^2$ respectively.\\

Next, we define $\big((Q^1)_t, (Q^2)_t\big)$. For each $u \in (a,b)_{\mathbb{Z}}$,  $j\in [k_1,k_2]_{\mathbb{Z}}$ and $s\in\{+1,-1\}$, there are an independent exponential clock with rate one and an independent uniform random variables on $(0,1)$, $U^{u,j,s}$. Given $0\leq t_0<t_1$, suppose that the clock labeled $(u,j,s)$ rings at the time $t=t_0$ and there is no other clock rings in the time period $[t_0,t_1]$. We describe below how to update $((Q^1)_{t_0}, (Q^2)_{t_0})$ to $((Q^1)_t, (Q^2)_t)$ for $t\in (t_0,t_1]$.

For brevity, we write $((\mathcal{Q}^1)_{t_0},(\mathcal{Q}^2)_{t_0})$ as $(\mathcal{Q}^1 , \mathcal{Q}^2)$. We set $\tilde{\mathcal{Q}}^i$ by $\tilde{\mathcal{Q}}^i_j(u)=\mathcal{Q}^i_j(u)+\ell^{-1}s$ and other entries remain unchanged. Define 
\begin{equation*}
R_i:=\frac{W^{k_1,k_2, (a,b)_{\mathbb{Z}},\vec{x}^i,\vec{y}^i,f^i,g^i}_{\Hamd}(\tilde{\mathcal{Q}}^{i})\mathbb{P}_{\free, X}^{k_1,k_2, (a,b)_{\mathbb{Z}},\vec{x}^i,\vec{y}^i}(\tilde{\mathcal{Q}}^{i})}{W^{k_1,k_2, (a,b)_{\mathbb{Z}},\vec{x}^i,\vec{y}^i,f^i,g^i}_{\Hamd}(\mathcal{Q}^{i})\mathbb{P}_{\free, X}^{k_1,k_2, (a,b)_{\mathbb{Z}},\vec{x}^i,\vec{y}^i}(\mathcal{Q}^{i})}.
\end{equation*}
Then for $t\in (t_0,t_1]$ $(\mathcal{Q}^i)_t$ is defined to be $\tilde{\mathcal{Q}}^i$ if $ R_i\geq U^{u,j,s}$ and it remains to be $\mathcal{Q}^i=(\mathcal{Q}^i)_{t_0}$ if $R_i< U^{u,j,s}$. Notice that from Assumption A1 (1), $W^{k_1,k_2, (a,b)_{\mathbb{Z}},\vec{x}^i,\vec{y}^i,f^i,g^i}_{\Hamd}( {\mathcal{Q}} )>0$ for any $ {\mathcal{Q}}\in C([k_1,k_2]_{\mathbb{Z}}\times [a,b]_{\mathbb{Z}},\mathbb{R}).$ With the assumption on $X$, we get $R_i>0$ if an only if $\tilde{Q}^i\in\mathfrak{X}^i$. Therefore, the above procedure defines a Markov chain $(\mathcal{Q}^i)_t$ on $\mathfrak{X}^i$. Moreover, it is straightforward to check that  $\mathbb{P}^{k_1,k_2, (a,b)_{\mathbb{Z}},\vec{x}^{i},\vec{y}^{i},f^i,g^i}_{\Hamd,X}$ are stationary measures.\\

Two Markov chains $(\mathcal{Q}^{i})_t$, $i=1,2$, are coupled through the same collection of clocks and uniform random variables $U^{u,j,s}$. We now show that this dynamics preserves the ordering. That is, for any $t\geq 0$,
$(\mathcal{Q}^{1}_j(u))_t\geq (\mathcal{Q}^{2}_j(u))_t$. Notice that at each update step, we could only change $\mathcal{Q}^{i}_j(u )$ to $\tilde{\mathcal{Q}}^{i}_j(u )=\mathcal{Q}^{i}_j(u )+\delta$ or  $\tilde{\mathcal{Q}}^{i}_j(u )=\mathcal{Q}^{i}_j(u )-\delta$. Hence there are only two cases that the ordering could possibly be violated. The first case is when the clock $(u,j,+1)$ rings and $\mathcal{Q}^{1}_j(u )=\mathcal{Q}^{2}_j(u )=z$, then the ordering will not be hold if $R_1< U^{u, j,+1}\leq R_2$. We will prove that we always have $R_1\geq R_2$. Therefore, the ordering is preserved. From the definition of $R_i$, one obtains 
\begin{align*}
R_i=R_{i,\textup{RW}}\cdot R_{i,\Hamd},
\end{align*}
where
\begin{align*}
R_{i,\textup{RW}}=&\frac{\exp\Big(-\Hrw\big(z+\ell^{-1} -\mathcal{Q}^{i}_j(u-1)\big)-\Hrw\big(\mathcal{Q}^{i}_j(u+1)-z-\ell^{-1}\big)\Big)}{\exp\Big(-\Hrw\big(z-\mathcal{Q}^{i}_j(u-1)\big)-\Hrw\big(\mathcal{Q}^{i}_j(u+1)-z\big)\Big)},
\end{align*}
and
\begin{align*}
R_{1,\Hamd}=&\prod_{k=j-1}^j\prod_{v=u-1}^{u+1} \exp\Big(-\Hamd\big(\rectangle(\tilde{\mathcal{Q}}^i,k,v)\big)+\Hamd\big(\rectangle(\mathcal{Q}^i,k,v)\big)\Big).
\end{align*}
Since $\Hrw$ is convex and $\mathcal{Q}^1\geq \mathcal{Q}^2$,  we have
\begin{align*}
&\quad-\Hrw(z+\ell^{-1}-\mathcal{Q}^1_j(u-1))+\Hrw(z-\mathcal{Q}^1_j(u-1))\\
&\geq -\Hrw(z+\ell^{-1}-\mathcal{Q}^2_j(u-1))+\Hrw(z-\mathcal{Q}^2_j(u-1))
\end{align*}
and
\begin{align*}
&\quad-\Hrw(\mathcal{Q}^1_j(u+1)-z-\ell^{-1})+\Hrw(\mathcal{Q}^1_j(u+1)-z)\\
&\geq -\Hrw(\mathcal{Q}^2_j(u+1)-z-\ell^{-1})+\Hrw(\mathcal{Q}^2_j(u+1)-z).
\end{align*}
This yields $R_{1,\textup{RW} }\geq R_{2,\textup{RW}}$. By Assumption A2 and $\mathcal{Q}^1\geq \mathcal{Q}^2$, we have each individual term in $R_{1,\Hamd}$ is greater than or equal to the one in $R_{2,\Hamd}$. For instance, take
\begin{align*}
\vec{a}=\rectangle({\mathcal{Q}}^1,j,u ), \quad \vec{a'}=\rectangle(\tilde{\mathcal{Q}}^1,j,u ),
\end{align*} 
and
\begin{align*}
\vec{b}=\rectangle({\mathcal{Q}}^2,j,u ), \quad \vec{b'}=\rectangle(\tilde{\mathcal{Q}}^2,j,u ).
\end{align*} 
Then Assumption A1 (2) implies
\begin{align*}
&{  -\Hamd\big(\rectangle(\tilde{\mathcal{Q}}^1,j,u_m)\big)+\Hamd\big(\rectangle(\mathcal{Q}^1,j,u_m)\big) } \geq {  -\Hamd\big(\rectangle(\tilde{\mathcal{Q}}^2,j,u_m)\big)+\Hamd\big(\rectangle(\mathcal{Q}^2,j,u_m)\big) }.
\end{align*}
The other five terms are dealt with similarly. This implies $R_1\geq R_2$. In short the ordering preserved after this update. A similar argument applies when $(u_m,j,-)$ rings. As a result, $(\mathcal{Q}^{1}_j )_t\geq (\mathcal{Q}^{2}_j)_t$ for all $t\geq 0$. \\

Now we show that the above Markov chains are irreducible. We drop the index $i$ for simplicity. Assume $k_1=k_2$. Let $x$ and $y$ be the left and right end points. We apply indunction on $b-a$. If $b-a=1$, the assertion is obvious. Assume $b-a\geq 2 $ and let $u=a+1$. Fix a commutation class and let $\mathcal{Q}_{\max}$ and $\mathcal{Q}_{\min}$ be states in such class for which $\mathcal{Q}_{\max}(u)$ and $\mathcal{Q}_{\min}(u)$ attains maximum and minimum. From $x,y\in \ell^{-1} \mathbb{Z}\cap [-2^{-1}\ell ,2^{-1}\ell ]$, we have $\mathcal{Q}_{\max}(u)= x+\ell $. The reason is that $\mathcal{Q}_{\max}(u)< x+\ell$ implies $\mathcal{Q}_{\max}(u+k)=\mathcal{Q}_{\max}(u)-k\ell $ for all $k\in [1,b-a-1]_{\mathbb{Z}}$, which is impossible. Similarly, $\mathcal{Q}_{\min}(u_1)= x-M$. This implies that in any communication class, there exists a state $\mathcal{Q}_{\textup{mid}}$ such that $\mathcal{Q}_{\textup{mid}}(u)=0$. From induction hypothesis, the Markov chain is irreducible. Then we can apply induction on $k_2-k_1$ to prove the general case.\\


Finally, $(\mathcal{Q}^i)_t$ converges to the stationary measures $\mathbb{P}^{k_1,k_2, (a,b)_{\mathbb{Z}},\vec{x}^{i},\vec{y}^{i},f^i,g^i}_{\Hamd,X}$. Notice that the ordered relation defines a closed set on $C([k_1,k_2]_{\mathbb{Z}}\times [a,b]_{\mathbb{Z}},\mathbb{R})^2$. Therefore it is preserved under weak convergence. This gives the desired coupling.
\end{proof}

To prove Lemma~\ref{lem:monotone}, we approximate the $\Hrw$-random walks by random walks which satisfy the assumptions in Lemma \ref{lem:monotone_discrete}. For any $\ell\in\mathbb{N}$, we define a discrete random variable $X^\ell$ by
\begin{align*}
\mathbb{P}(X^{\ell}=k\ell^{-1})=\left\{
\begin{array}{cc}
C_{\ell}^{-1}\exp(-\Hrw(k\ell^{-1})) & k\in [-\ell^2,\ell^2]_{\mathbb{Z}},\\
0 & \textup{others},
\end{array}
\right. 
\end{align*}
where $C_{\ell}=\sum_{k=-\ell^2}^{\ell^2} \exp(-\Hrw(k\ell^{-1}))$ is the normalizing constant.

Let us recall the notations in Lemma~\ref{lem:monotone}. Fix  $k_1\leq k_2$ with $k_1,k_2\in\mathbb{Z}$ and $a < b$ with $a,b\in\mathbb{Z}$. For $i=1,2$, fix vectors $(\vec{x}^i,\vec{y}^i)\in \mathbb{R}^{k_2-k_1+1}$, and functions $f^i: (a,b)_{\mathbb{Z}}\to \mathbb{R}\cup \{\infty\}$, $g^i:(a,b)_{\mathbb{Z}}\to \mathbb{R}\cup \{-\infty\}$. 
Let $\mathcal{Q}^i=(Q^i_{k_1},\dots, Q^i_{k_2}) $ be the $[k_1,k_2]_{\mathbb{Z}}\times  [a,b]_{\mathbb{Z}}$-indexed line ensemble with laws $\mathbb{P}^{k_1,k_2, (a,b)_{\mathbb{Z}},\vec{x}^i,\vec{y}^i,f^i,g^i}_{\Hamd,\Hrw}$. Moreover, the $i=1$ vectors and functions are pointwise greater than or equal to their $i=2$ counterparts. 

We take boundary vectors $\vec{x}^{i,\ell},\vec{y}^{i,\ell}\in (\ell^{-1}\mathbb{Z})^{k_2-k_1+1}$ such that $ {x}_j^{1,\ell}\geq {x}_j^{2,\ell}$, $ {y}_j^{1,\ell}\geq {y}_j^{2,\ell}$, $\vec{x}^{i,\ell}$ converges to $ \vec{x}^{i}$ and $\vec{y}^{i,\ell}$ converges to $\vec{y}^{i}$. Let $\mathcal{Q}^{i,\ell}_j(u)$ be the line ensembles with laws $\mathbb{P}^{k_1,k_2,(a,b)_{\mathbb{Z}} ,\vec{x}^{i,\ell},\vec{y}^{i,\ell},f^i,g^i}_{\Hamd,X^{\ell}}$.
\begin{lemma}\label{lem:Qconvergence}
Under the assumptions in Lemma~\ref{lem:monotone}, $\mathcal{Q}^{i,\ell}$ converges weakly to $\mathcal{Q}^{i}$ as $\ell$ goes to infinity.
\end{lemma}

With Lemma~\ref{lem:Qconvergence}, we are ready to prove Lemma~\ref{lem:monotone}.
\begin{proof}[Proof of Lemma~\ref{lem:monotone}]
From Assumption A2 and Lemma~\ref{lem:monotone_discrete}, $\mathcal{Q}^{1,\ell}$ and $\mathcal{Q}^{2,\ell}$ can be coupled together such that $\mathcal{Q}_j^{1,\ell}(u)\geq \mathcal{Q}_j^{2,\ell}(u)$ for all $j\in [k_1,k_2]_{\mathbb{Z}}$ and $u\in [a,b]_{\mathbb{Z}}$. From Lemma~\ref{lem:Qconvergence}, the coupling of $\mathcal{Q}^{1}$ and $\mathcal{Q}^{2}$ can be obtained by letting $\ell$ go to infinity. 
\end{proof}
Lastly, we prove Lemma~\ref{lem:Qconvergence}
\begin{proof}[Proof of Lemma~\ref{lem:Qconvergence}]
Let $m=b-a$ and $f(x)=\exp(-\Hrw(x))$. We start by showing that $\ell^{-1}C_{\ell}$ converges to $1$ as $\ell$ goes to infinity. Because $\Hrw$ is convex, there exists $M_0>0$ such that $f(x)$ is non-increasing for $x\geq M_0$ and is non-decreasing for $x\leq -M_0$. We compute
\begin{align*}
\ell^{-1}C_{\ell}=&\frac{1}{\ell}\sum_{x\in [-\ell,\ell]\cap\ell^{-1}\mathbb{Z}} f(x)\\
                 =& \underbrace{\frac{1}{\ell}\sum_{x\in [-M,M]\cap\ell^{-1}\mathbb{Z}} f(x)}_{\textsc{I}}+\underbrace{\frac{1}{\ell} \sum_{x\in( [-\ell,-M)\cup(M,\ell])\cap\ell^{-1}\mathbb{Z}} f(x)}_{\textsc{II}}.
\end{align*}
Since $f(x)$ is monotone for $|x|\geq M_0$, for any $M\geq M_0+1$ and $\ell\in\mathbb{N}$ we have
\begin{align*}
\frac{1}{\ell} \sum_{x\in    (M,\ell] \cap\ell^{-1}\mathbb{Z}} f(x)\leq \int_{M-1}^\infty f(x)\, dx.
\end{align*}
Therefore, the second term $\textsc{II}$ converges uniform in $\ell$ to zero as $M$ goes to infinity. For any fixed $M>0$, $\textsc{I}$ converges to the integral of $f(x)$ in $[-M,M]$. As a result, $\ell^{-1}C_{\ell}$ converges to the total integral of $f(x)$ which is $1$.\\

Next, we show that the random walk bridges of $X^\ell$ converges weakly to the random walk bridge of $\Hrw$. L	et $S^{\ell}_m:=X^{\ell}_1+X^{\ell}_2+\dots +X^{\ell}_m$. Here $X^{\ell}_1,X^{\ell}_2,\dots,X^{\ell}_m$ are i.i.d. random variables with the same law as $X^{\ell}$. We claim that for any $z\in\mathbb{R}$ and any sequence $z^\ell$ in$\ell^{-1}\mathbb{Z}$ with $z^{\ell} $ converging to $z$, it holds that
\begin{align}\label{aaaa}
\lim_{\ell\to\infty} \ell\mathbb{P}(S^\ell_m=z^{\ell})= \prod_{j=1}^{m-1} \int_{x_j
\in\mathbb{R} }dx_j \prod_{j=1}^{m} f(x_j-x_{j-1}).
\end{align}
In \eqref{aaaa} we used the convention $x_0=0$ and $x_m=z$. From the definition,
\begin{align*}
\left(\ell^{-1}C_\ell \right)^m\cdot\ell \mathbb{P}(S^\ell_m=z^{\ell})=\frac{1}{\ell^{m-1} }\sum_{\substack{ y_k\in [-\ell,\ell]\cap\ell^{-1}\mathbb{Z}\\ k=1,2,\dots ,m-1}}  f(y_1)f(y_2)\dots f(y_{m-1})f(z^{\ell}-\sum_{k=1}^{m-1}y_k). 
\end{align*}
For any $m' \in [1,m]_{\mathbb{Z}}$ and $M>0$,
\begin{align*}
&\frac{1}{\ell^{m-1} }\sum_{\substack{ y_{m'}\in [-\ell,\ell]\cap\ell^{-1}\mathbb{Z}\\ |y_{m'}|>M } }\sum_{\substack{ y_k\in [-\ell,\ell]\cap\ell^{-1}\mathbb{Z}\\ k=1,2,\dots ,m-1, k\neq m'}}  f(y_1)f(y_2)\dots f(y_{m-1})f(z^{\ell}-\sum_{k=1}^{m-1}y_k)\\
\leq &\max_{x\in \mathbb{R}}f(x) \cdot  \frac{1}{\ell^{m-1} }\sum_{\substack{ y_{m'}\in [-\ell,\ell]\cap\ell^{-1}\mathbb{Z}\\ |y_{m'}|>M } }\sum_{\substack{ y_k\in [-\ell,\ell]\cap\ell^{-1}\mathbb{Z}\\ k=1,2,\dots ,m-1, k\neq m'}}  f(y_1)f(y_2)\dots f(y_{m-1})\\
=&\max_{x\in \mathbb{R}}f(x) \cdot (\ell^{-1}C_{\ell})^{m-2}\left( \frac{1}{\ell} \sum_{y\in( [-\ell,-M)\cup(M,\ell])\cap\ell^{-1}\mathbb{Z}} f(y)\right),
\end{align*}
which converges uniformly in $\ell$ to zero as $M$ goes to infinity. Therefore,
\begin{align*}
\ell \mathbb{P}(S^\ell_m=z^{\ell})=\left(\ell^{-1}C_\ell \right)^{-m}\cdot \frac{1}{\ell^{m-1}}\sum_{\substack{ y_k\in [-M,M]\cap\ell^{-1}\mathbb{Z}\\ k=1,2,\dots ,m-1}}  f(y_1)f(y_2)\dots f(y_{m-1})f(z^{\ell}-\sum_{k=1}^{m-1}y_k)+o(1),
\end{align*}
and \eqref{aaaa} follows. By a similar argument, we have for any $a_1,a_2,\dots, a_{m-1}\in\mathbb{R}$,

\begin{align*}
\ell \mathbb{P}(S^\ell_1\leq a_1,S^\ell_2\leq a_2,\dots,S^\ell_{m-1}\leq a_{m-1},S^\ell_m=z^\ell)
\end{align*}
converges to
\begin{align*}
\prod_{j=1}^{m-1} \int_{x_j\leq a_j}dx_j \prod_{j=1}^{m} f(x_j-x_{j-1}),
\end{align*}
where we use the convention $x_0=0$ and $x_m=z$. As a result, the free bridges of $X^\ell$ converge to the free bridges of $\Hrw$.\\

Lastly, let $\mathbb{E}^i$ be the expectation of $\mathbb{P}^{k_1,k_2, (a,b)_{\mathbb{Z}},\vec{x}^i,\vec{y}^i,f^i,g^i}_{\free,\Hrw}$ and $\mathbb{E}^{i,\ell}$ be the expectation of $\mathbb{P}^{k_1,k_2, (a,b)_{\mathbb{Z}},\vec{x}^{i,\ell},\vec{y}^{i,\ell},f^i,g^i}_{\free,\Hrw}$. These are free random walk bridges defined in Definition~\ref{def:RW_ensemble}. Denote the normalizing constants
\begin{align*}
Z^{i,\ell}:=&\mathbb{E}^{i,\ell} [ W_{\Hamd}^{k_1,k_2, (a,b)_{\mathbb{Z}},\vec{x}^{i,\ell},\vec{y}^{i,\ell},f^i,g^i}],\\
Z^{i }:=&\mathbb{E}^{i } [ W_{\Hamd}^{k_1,k_2,(a,b)_{\mathbb{Z}},\vec{x}^{i },\vec{y}^{i} ,f^i,g^i}].
\end{align*} 
We remark that as functions from $C([k_1,k_2]_{\mathbb{Z}}\times [a,b]_{\mathbb{Z}},\to\mathbb{R})$ to $\mathbb{R}$, $W_{\Hamd}^{k_1,k_2,\Lambda_d(a,b),\vec{x}^{i,\ell},\vec{y}^{i,\ell},f^i,g^i}$ and $W_{\Hamd}^{k_1,k_2,\Lambda_d(a,b),\vec{x}^{i },\vec{y}^{i},f^i,g^i}$ are actually identical.
From the continuity of $\Hamd$, they are continuous. Together with the convergence of free bridges proved above, we get 
\begin{align*}
\lim_{\ell\to\infty}Z^{i,\ell}=Z^{i}.
\end{align*}
Similarly, for any other bounded continuous functional $F$ from $C([k_1,k_2]_{\mathbb{Z}}\times [a,b]_{\mathbb{Z}},\to\mathbb{R})$ to $\mathbb{R}$, it holds that
\begin{align*}
\lim_{\ell\to\infty}\mathbb{E}^{i,\ell} [F\times W_{\Hamd}^{k_1,k_2,\Lambda_d(a,b),\vec{x}^{i,\ell},\vec{y}^{i,\ell},f^i,g^i}] =\mathbb{E}^{i} [F\times W_{\Hamd}^{k_1,k_2,\Lambda_d(a,b),\vec{x}^i,\vec{y}^i,f^i,g^i}]. 
\end{align*}
This implies  
\begin{align*}
\mathbb{E}^{i,\ell}[F(\mathcal{Q}^{i,\ell})]=\frac{1}{Z^{i,\ell}}\mathbb{E}^{i,\ell} [F\times W_{\Hamd}^{k_1,k_2,\Lambda_d(a,b),\vec{x}^{i,\ell},\vec{y}^{i,\ell},f^i,g^i}]
\end{align*}
converges to
\begin{align*}
\mathbb{E}^{i }[F(\mathcal{Q}^{i })]=\frac{1}{Z^{i }}\mathbb{E}^{i } [F\times W_{\Hamd}^{k_1,k_2,\Lambda_d(a,b),\vec{x}^{i },\vec{y}^{i },f^i,g^i}].
\end{align*}
Thus $\mathcal{Q}^{i,\ell}$ converges weakly to $\mathcal{Q}^{i}$.
\end{proof}

\end{appendix}

\end{document}